\documentclass[12pt]{amsart}

\usepackage{amsthm,amsmath,amssymb,enumerate,color, tikz}
\usepackage{mathtools}
\usepackage{booktabs}

\newcommand{\starprod}{\circledast{}}
\newcommand{\combprod}{\rhd{}}

\DeclareMathOperator{\resolv}{\mathfrak{R}}
\DeclareMathOperator{\spec}{\mathrm{spec}}
\DeclareMathOperator{\dist}{\mathrm{dist}}

\usetikzlibrary{shapes.geometric}

\usepackage{svn}
\SVN $Revision: 118 $

\usepackage[margin=1.5cm]{geometry}
\usepackage[colorlinks,anchorcolor=black,citecolor=black,linkcolor=black]{hyperref}
\usepackage{tikz}

\numberwithin{equation}{section}
\newtheorem{theorem}{Theorem}[section]

\newtheorem{corollary}[theorem]{Corollary}
\newtheorem{lemma}[theorem]{Lemma}
\newtheorem{proposition}[theorem]{Proposition}

\theoremstyle{definition}
\newtheorem{remark}[theorem]{Remark}
\newtheorem{definition}[theorem]{Definition}

\newtheorem{example}[theorem]{Example}

\def\C{\mathbb{C}}       \def\R{\mathbb R}        \newcommand\IR{\mathbb R}        \def\N{\mathbb{N}}       \def\Z{\mathbb{Z}}        \def\Tr{{\rm Tr}}            \def\dim{{\rm dim}}       \def\SG{\mathfrak{S}}    \newcommand\alg[1]{\mathcal{#1}}       
\newcommand\hilbH{\mathcal{H}}   \newcommand\vac{\xi}                    \newcommand{\norm}[1]{\lVert#1\rVert}  
\newcommand{\abs}[1]{
  \left\lvert
    #1
  \right\rvert}
\newcommand\bub[1]{\mathring{#1}}

\DeclareMathOperator\OP{\mathcal{OP}}   \DeclareMathOperator\SP{\mathcal{P}}      \DeclareMathOperator\Int{\mathcal{I}}       \DeclareMathOperator\CI{\mathcal{CI}}

\DeclareMathOperator\Cauchy{\mathfrak{g}}     \DeclareMathOperator\RC{\tilde{\mathfrak{g}}}   \DeclareMathOperator\MG{\mathfrak{m}}     \DeclareMathOperator\HH{\mathfrak{h}}      \DeclareMathOperator\CB{\mathfrak{c}}      \DeclareMathOperator\CBC{c}                    \DeclareMathOperator\KK{\mathfrak{l}}

\newcommand\bG{\mathbf{\Gamma}}
\newcommand\bH{\mathbf{H}}
\newcommand\bS{\mathbf{S}}    \newcommand\bK{\mathbf{K}}    \newcommand\bF{\mathbf{F}}

\begin{document}

\title{Cyclic independence: Boolean and monotone}

\subjclass{46L54, 05C76}
\keywords{Boolean independence, monotone independence, star product, comb product}

\author{Octavio Arizmendi}
\address[Octavio Arizmendi]{
  Centro de Investigaci\'{o}n en Matem\'{a}ticas. Calle Jalisco SN. Guanajuato,
Mexico
}
\email{octavius@cimat.mx}
\thanks{O.A.~gratefully acknowledges financial support by the grants Conacyt
A1-S-9764 and SFB TRR 195.}
\author{Takahiro Hasebe}
\address[Takahiro Hasebe]{Department of Mathematics, Hokkaido University, North 10 West 8,
  Kita-Ku, Sapporo 060-0810, Japan.}
\email{thasebe@math.sci.hokudai.ac.jp}

\thanks{
T.H.~is supported by JSPS Grant-in-Aid for Early-Career Scientists
19K14546 and for Scientific Research (B) 18H0111. 
This work was supported by JSPS Open Partnership Joint Research Projects
grant no. JPJSBP120209921.
}
\author{Franz Lehner}
\address[Franz Lehner]{Institut f\"ur Diskrete Mathematik,
Technische Universit\"at Graz
Steyrergasse 30, 8010 Graz, Austria }
\email{lehner@math.tugraz.at}

\thanks{\mbox{\includegraphics[height=2em]{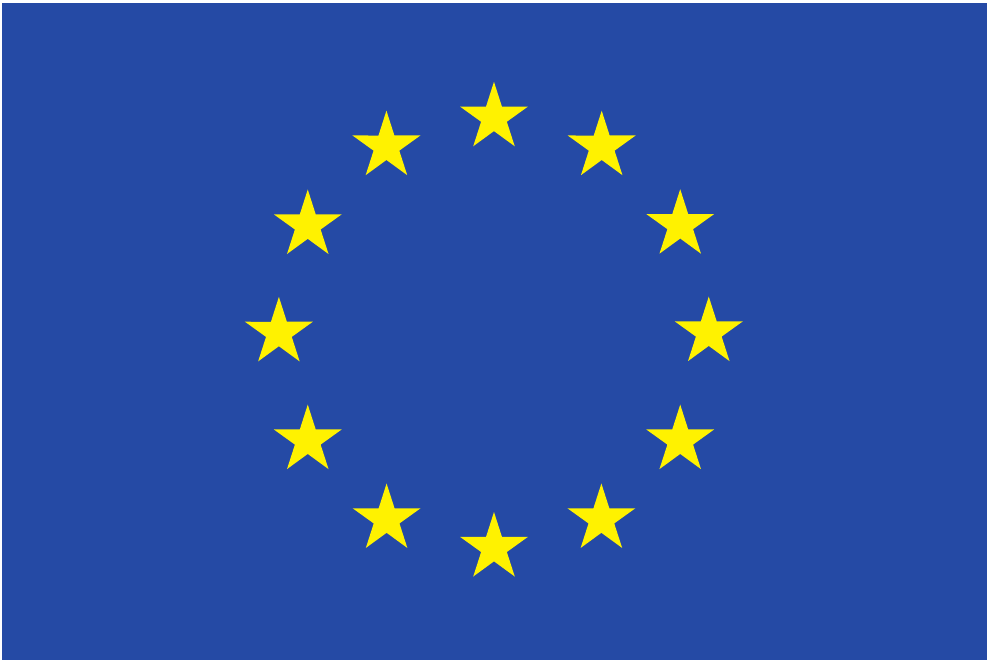}} \parbox[b][1.5em]{3.5cm}{Co-funded
  by\\ the European Union}  F.L. was supported by
  the H2020-MSCA-RISE project 734922 - CONNECT}

\date{Rev.\SVNRevision, \today}

\begin{abstract}
The present paper introduces a modified version of cyclic-monotone independence which originally arose in the context of random matrices, and also introduces its natural analogy called cyclic-Boolean independence. We investigate formulas for convolutions, limit theorems for sums of independent random variables, and also classify infinitely divisible distributions with respect to cyclic-Boolean convolution. Finally, we provide applications to the eigenvalues of the adjacency matrices of iterated star products of graphs and also iterated comb products of graphs. 
\end{abstract}

\maketitle

\section{Introduction}\label{sec1}
The present paper takes its origin in the concept of \emph{cyclic-monotone
independence} which appeared in the study of random matrices 
\cite{CHS18,MR3798863} and which deserves separate treatment; see \cite{MR4260215} for further work.  
The term ``cyclic-monotone independence'' was coined in \cite{CHS18} because of its apparent similarity with monotone independence except that it involves two linear functionals: a state and a tracial linear functional. It abstracts an asymptotic formula for the mixed moments, with respect to the non-normalized trace, of a random rotation of two sets $\mathbf{A}^N$ and $\mathbf{B}^N$ consisting of $N\times N$ deterministic matrices such that all mixed moments of $\mathbf{A}^N$ have finite limits with respect to the \emph{non-normalized trace} as $N$ tends to infinity, and all mixed moments of $\mathbf{B}^N$ have finite limits with respect to the \emph{normalized trace}. More precisely, suppose that $\{A^N_i: 1 \le i \le k\}$ and $\{B^N_i: 1 \le i \le k\}, N=1,2,3,...$ are families of $N\times N$ deterministic matrices that satisfy the following conditions: for any $\ast$-polynomial $P(x_1,x_2,\dots,x_k)$ in non-commuting variables $x_1,x_2,\dots,x_k$ over the field $\C$ without a constant term (e.g.\ $P(x_1,x_2)= x_1^2 x_2 x_1^*$),  the limits 
\[
\lim_{N\to\infty}\Tr_N[P(A_1^N, A_2^N, \dots, A_k^N)] \quad \text{and} \quad \lim_{N\to\infty}\frac{1}{N}\Tr_N[P(B_1^N, B_2^N, \dots, B_k^N)]
\]
exist in $\C$. According to  \cite[Theorem 4.3]{CHS18}, for an $N\times N$ Haar unitary matrix $U^N, N=1,2,3,\dots$, we have the almost sure convergence
\begin{equation}\label{eq:RM}
\begin{split}
&\lim_{N\to\infty}\Tr_N[A_1^N (U^N)^*B_1^N U^N A_2^N (U^N)^*B_2^N U^N \cdots A_k^N (U^N)^*B_k^NU^N]  \\
&\qquad = \lim_{N\to\infty}\Tr_N[A_1^N A_2^N \cdots A_k^N ] \prod_{j=1}^k\left[ \lim_{N\to\infty} \frac{1}{N}\Tr_N[B_j^N]\right]. 
\end{split}
\end{equation}
The formula \eqref{eq:RM} shows some similarity with monotone independence, but
they are not the same because the formula involves both normalized trace and
non-normalized trace.

The present paper offers a simple operator model for cyclic-monotone
independence realized on the tensor product of Hilbert spaces. This
construction also uncovers the associativity of cyclic-monotone independence
with respect to a state and a trace. In order to ensure associativity, we modify the definition of cyclic-monotone independence. The new definition consists of two conditions: one is basically the condition in \cite[Definition 3.2]{CHS18} referring to both the state and the tracial linear functional, and the other is monotone independence with respect to the state (see Definition \ref{def:cyclic_monotone}). 
The modified definition of cyclic-monotone independence shares the same spirit with c-monotone independence \cite{MR2847249} (and c-freeness \cite{BozejkoLeinertSpeicher:1996:convolution}) because they are all associative notions of independence referring to two linear functionals.

However, the relationship between the random matrix model and the operator
model is not perfectly understood. Curiously monotone independence does
not appear in the random matrix model above, although it appears very
naturally in the operator model. This is related to the fact that the random
matrix model above is limited to two families of random matrices $\mathbf{A}^N$
and $\mathbf{B}^N$ and hence the question of associativity is not relevant.

Our operator realization of (modified) cyclic-monotone independence also
indicates that a similar construction works for Boolean independence, which
therefore leads to a notion of \emph{cyclic-Boolean independence}. We develop a
general theory of these two independences: computing generating functions for
the sum of independent random variables, limit theorems, cyclic-Boolean
cumulants which are governed by cyclic-interval partitions and infinitely
divisible distributions with respect to cyclic-Boolean convolution. We do not
know how to define cyclic-monotone cumulants and therefore this question is not
addressed in the present paper.

Moreover, the operator models for cyclic-Boolean and cyclic-monotone independences are directly connected to the star product of (rooted) graphs (see Section~\ref{sec:starproduct})
and the comb product of (rooted) graphs (see Section~\ref{sec:combproduct}). 
Specifically, the eigenvalues of their adjacency matrices can be analyzed by means of cyclic-Boolean independence and cyclic-monotone independence, respectively.  

The techniques are motivated by the relations between the adjacency matrix, the spectrum, the characteristic polynomial and walk generating functions 
of a graph. These form the core subject of 
algebraic graph theory, which deals with various matrices, polynomials and 
generating functions and other invariants carrying information about graphs.

It was shown by Schwenk \cite{Sch74} (later generalized by
Godsil and McKay \cite{GM78})
that the characteristic polynomial
of the star product (or coalescence)
and the comb product (or rooted product)
of graphs
only depends on the characteristic polynomials of the factors and the walk
generating function at the roots of the factors and he gave an explicit formula. 
Similar simple formulas for the generating function of closed walks
starting at the root hold.
While Schwenk's proofs are combinatorial, we will give algebraic proofs based
on the Schur complement which can be generalized to arbitrary matrices and operators. 
Accardi, Ben Ghorbal and Obata \cite{MR2085641} and Obata \cite{MR2062191}
initiated the application of  monotone independence and Boolean independence to the asymptotic spectral analysis of adjacency matrices of iterated comb products and star products, respectively. The operator models for cyclic-monotone and cyclic-Boolean independences extend their work in the sense that the new framework also enables one to analyze refined properties of eigenvalues of the adjacency matrices.   
These generalizations are the subject of the present paper and illustrate
the emergence of new notions of noncommutative independence, i.e., cyclic-monotone and cyclic-Boolean ones. 

To summarize, the main contributions of the present paper are: 
\begin{enumerate}
\item the new notion of cyclic-Boolean independence (Sections \ref{sec3}) and a
 modification of the definition of cyclic-monotone independence given in \cite{CHS18} (Section \ref{sec7}); 
\item  operator models for cyclic-Boolean independence (Sections \ref{sec3}) and for cyclic-monotone independence (Section \ref{sec7}); 
\item convolution formulas for the sum of independent random variables (Sections \ref{sec4}, \ref{sec7}) and their relationships to algebraic graph theory (Section \ref{sec2}); 
\item limit theorems for sums of independent random variables (Sections \ref{sec3}, \ref{sec7}); 
\item cyclic-Boolean cumulants and the relevant partition structure with cyclic-interval partitions (Section \ref{sec5}); 
\item classification of infinitely divisible distributions for cyclic-Boolean convolution (Section \ref{sec6}); 
\item analysis of the asymptotics of the eigenvalues of the adjacency matrices of iterated star product of graphs and iterated comb product of graphs (Sections \ref{sec4}, \ref{sec7}). 
\end{enumerate}

Recently Collins, Leid and Sakuma  found a different matrix model for monotone independence and cyclic monotone independence \cite{CLS}. So far a connection between their model and ours is not clear.

\section{Preliminaries}\label{sec2}
\subsection{Adjacency matrix}
Let $\bG=(V,E)$ be a graph on a vertex set $V=\{v_1,v_2,\dots,v_d\}$
with edge set $E$.
We always consider finite undirected graphs without loops
or multiple edges. An edge between two vertices $u$ and $v$  is denoted by $uv$. The \emph{adjacency matrix} of $\bG$ is the matrix $A=\{a_{ij}\}_{i,j=1}^d$
with entries
\begin{equation*}
  a_{ij} =
  \begin{cases}
    1, & v_iv_j\in E, \\
    0, &\text{otherwise}. 
  \end{cases}
\end{equation*}
The \emph{spectrum of the graph $\bG$} is the spectrum of its adjacency matrix. It consists of the eigenvalues $\lambda_i$ of $A$ which are the roots 
of the \emph{characteristic polynomial} 
\[\phi_\bG(x) =\det(xI-A)=\prod_{i=1}^d(x-\lambda_i).
\]
Alternatively, the eigenvalues of $A$ are the poles of the (tracial)
\emph{Cauchy transform} 
\begin{equation}
  \label{eq:cauchytransform}
  \Cauchy_\bG(z) = \Tr((zI-A)^{-1})
  .
\end{equation}
The Cauchy transform and the characteristic polynomial
are mutually related by the logarithmic derivative
\begin{equation}
  \label{eq:logderiv}
  \Cauchy_\bG(z) = \sum_{i=1}^d \frac{1}{z-\lambda_i}=\frac{d}{dz}\log\phi_\bG(z) = \frac{\phi_\bG'(z)}{\phi_\bG(z)}
  .
\end{equation}
For the generalization of this identity to trace class operators 
it will be convenient to remove the moment of order zero and work with the ``renormalized'' Cauchy transform
\begin{equation}
  \label{eq:modcauchytransform}
  \RC_\bG(z) = \Cauchy_\bG(z) -\frac{d}{z}
  = \Tr((zI-A)^{-1}-z^{-1}I)
\end{equation}
instead.

\subsection{Walk generating functions}
Let $(\bG, o)$ be a finite rooted graph, i.e.,
a graph on vertices $v_1,v_2,\dots,v_d$ where we single
out the vertex $o=v_1$ as the root of the graph. 
The number $m_n$ of closed walks of length $n$ starting at the root $o$ is equal to $\langle A^n e_1, e_1\rangle$
where $A$ is the adjacency matrix of $\bG$ and $e_1$ is the vector $(1,0,0,\dots,0) \in \C^d$. 
Denote by
\begin{equation}
  \label{eq:walkgenfct}
M_\bG(z) = \sum_{n=0}^\infty m_n z^n = \langle (I-zA)^{-1} e_1, e_1\rangle
\end{equation}
the \emph{walk generating function}.
One caution is in place here.
To keep notation simple here and below we do not explicitly
write the root in subscripts,
although the generating functions
depend on the choice of the root.

It will be more convenient to rather work with the resolvent of $A$ and with the \emph{Green function} (evaluated at the root) 
\begin{equation}
  \label{eq:greenfct}
  G_\bG(z) = \langle (zI-A)^{-1} e_1, e_1\rangle = \frac{1}{z}M_\bG\left(\frac{1}{z}\right)
\end{equation}
and its reciprocal
\begin{equation}
  F_\bG(z) = \frac{1}{G_\bG(z)}. 
\end{equation}

We can obtain a relation between the Green function
\eqref{eq:greenfct} and the Cauchy transform  \eqref{eq:cauchytransform} 
from the \emph{Schur complement}.

\subsection{Schur complement}
Let
\begin{equation*}
  M =
  \begin{bmatrix}
    A&B\\
    C&D
  \end{bmatrix}
\end{equation*}
be a block matrix and assume $D$ is invertible. 
Then the \emph{Schur complement} 
\cite {Zhang:2005:schur}
is defined as
\begin{equation}
  \label{eq:defschur}
  M/D = A-BD^{-1}C
  .
\end{equation}
It appears in \emph{Aitken's factorization}
\begin{equation}
  M = 
\begin{bmatrix}
  I & BD^{-1}\\
  0&I
\end{bmatrix}
\begin{bmatrix}
  A-BD^{-1}C&0\\
  0& D
\end{bmatrix}
\begin{bmatrix}
  I & 0\\
  D^{-1}C&I
\end{bmatrix}. 
\end{equation}
which is obtained by Gaussian elimination on the original Matrix $M$.
From this factorization we infer the following assertions:
\begin{enumerate}
 \item  $M$ is invertible if and only
  if $M/D$ is invertible.
  If this is the case, then the \emph{Banachiewicz inversion formula}
  \begin{equation}
    \label{eq:Banachiewicz}
    M^{-1} =
    \begin{bmatrix}
      (A-BD^{-1}C)^{-1} & -(A-BD^{-1}C)^{-1}BD^{-1}\\
      -D^{-1}C(A-BD^{-1}C)^{-1}&   D^{-1}+ D^{-1}C   (A-BD^{-1}C)^{-1} BD^{-1} 
    \end{bmatrix}. 
  \end{equation}
  holds.
 \item  The determinant factorizes and \emph{Jacobi's identity} 
  \begin{equation}
    \label{eq:detM=detM/DdetD}
    \det M = \det(M/D)\det D
  \end{equation}
  holds.
\end{enumerate}

\subsection{Relation between the Green function and the characteristic polynomial}
Let $(\bG,o)$ be a rooted graph on vertices $v_1,v_2,\dots,v_d$.
If we decompose its adjacency matrix
$$
A =
\begin{bmatrix}
  0&b^*\\
  b& D
\end{bmatrix}
$$
into block form with $D=A_{\bG\setminus o}$,
then 
the Green function \eqref{eq:greenfct} is the upper left entry
of the inverse of the matrix
$$
M=
zI-A =
\begin{bmatrix}
  z&-b^*\\
  -b& zI-D
\end{bmatrix}
$$
and coincides with the inverse of its Schur complement,
which results in
\begin{equation*}
  G_\bG(z) = \langle(zI-A)^{-1} e_1,e_1\rangle = (z-b^*(zI-D)^{-1}b)^{-1}
  .
\end{equation*}
Consequently the Schur complement 
\eqref{eq:defschur}
equals $G_\bG(z)^{-1}=F_\bG(z)$,
cf.~\eqref{eq:Banachiewicz}.
Being a matrix of dimension 1 it equals its determinant
and  identity \eqref{eq:detM=detM/DdetD} yields
\begin{equation}
  \label{eq:phiGamma=GGammaphiGamma-o}
  \phi_\bG(x) = F_\bG(x) \,\phi_{\bG\setminus o}(x), \qquad x \in \C\setminus \R.  
\end{equation}

\subsection{Relation between the  Green function and the Cauchy transform of a general matrix}
Let $A$ be a $d\times d$ matrix. We want to understand the relation between
the functions 
$$
G_A(z)  = \langle (z-A)^{-1} e_1, e_1\rangle
\quad
\text{ and }
\quad
\Cauchy_A(z)  = \Tr( (z-A)^{-1} ). 
$$
To this end we partition the matrix into blocks of dimension $1$ and $d-1$
\begin{equation}
  \label{eq:Ablockdecomp}
  A =
  \begin{bmatrix}
    \alpha & a_1^*\\
    a_2 &\bub{A}
  \end{bmatrix}
  .
\end{equation}
Now the corresponding Schur complement   \eqref{eq:defschur} of $z-A$ 
is a scalar
$$
S = z - \alpha - a_1^*(z-\bub{A})^{-1}a_2
$$
and we conclude from the Banachiewicz inversion formula \eqref{eq:Banachiewicz}
that
\begin{equation}
  \label{eq:GA=banach}
  G_A(z) = S^{-1} =
  \frac{1}{z - \alpha - a_1^*(z-\bub{A})^{-1}a_2}
\end{equation}
and
\begin{align*}
 \Cauchy_A(z)  
  &= \Tr
\begin{bmatrix}
    S^{-1} & * \\
    *                                            &
     (z-\bub{A})^{-1} + (z-\bub{A})^{-1}a_2 S^{-1}a_1^*(z-\bub{A})^{-1} 
  \end{bmatrix}
  \\
  &= G_A(z) +  \Cauchy_{\bub{A}}(z) + G_A(z) \Tr( (z-\bub{A})^{-1}a_2a_1^*(z-\bub{A})^{-1} ) \\
  &= G_A(z)(1 + \Tr( a_1^*(z-\bub{A})^{-2} a_2 )) + \Cauchy_{\bub{A}}(z)\\
  &= \frac{F_A'(z)}{F_A(z)} + \Cauchy_{\bub{A}}(z). 
\end{align*}
After subtracting the unit matrix according to 
\eqref{eq:modcauchytransform}
 we obtain the  identity
 \begin{equation}
\label{eq:gAt=gA+DlogzG}
   \RC_{\bub{A}}(z)
   = \RC_A(z) + \frac{d}{dz}\log(zG_A(z))  
   = \RC_A(z) + \frac{1}{z} + \frac{G_A'(z)}{G_A(z)}, 
 \end{equation}
which can be extended to trace class operators.

\subsection{The star product and its adjacency matrix}
\label{sec:starproduct}

Let $(\bG_1,o_1)=(V_1,E_1,o_1)$ and $(\bG_2,o_2)=(V_2,E_2,o_2)$ be rooted
graphs. The star product, denoted by $(\bG_1,o_1)\starprod (\bG_2,o_2)$,
is defined by conglutinating the graphs $(\bG_1,o_1),
(\bG_2,o_2)$ at their roots; see Fig.~\ref{fig:star}.
\begin{figure}
\begin{tikzpicture}
\draw (0,0) -- (1,0) -- (1,1) -- (0,1) -- (0,0); 
\node at (0,0) {$\bullet$};
\node at (1,0) {$\bullet$};
\node at (0,1) {$\bullet$};
\node at (1,1) {$\odot$};
--------
\node at (2,0.5) {$\starprod$}; 
\draw (3,0) -- (4,0) -- (3.5,1) -- (3,0); 
\node at (3,0) {$\bullet$};
\node at (4,0) {$\bullet$};
\node at (3.5,1) {$\odot$};
---------
\node at (5,0.5) {$=$}; 
--------
\draw (6,0.5) -- (6.7,1.2) -- (7.4,0.5) -- (6.7,-0.2) -- (6,0.5); \node at (6,0.5) {$\bullet$};
\node at (6.7,1.2) {$\bullet$};
\node at (7.4,0.5) {$\odot$};
\node at (6.7,-0.2) {$\bullet$};
\node at (6.7,1.2) {$\bullet$};
\draw  (7.4,0.5) -- (8.3,1) -- (8.3,0) -- (7.4,0.5);  \node at (8.3,0) {$\bullet$};
\node at (8.3,1) {$\bullet$};
\end{tikzpicture}
\caption{Star product of rooted graphs}\label{fig:star}
\end{figure}
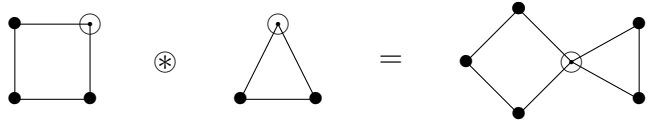
Formally the vertex set of $\bG$ can be realized as a subset of $V_1 \times V_2$, 
\[
V = \{(x_1, o_2): x_1 \in V_1\} \cup  \{(o_1, x_2): x_2 \in V_2\}. 
\]
Two vertices $(x_1,x_2)$ and $(y_1,y_2)$ of $V$ are connected by an edge if
either $x_1 y_1 \in E_1$ and $x_2=y_2=o_2$, or $x_1=y_1=o_1$ and $x_2 y_2 \in
E_2$.
The cartesian product of the vertices corresponds to
the tensor product of the vector spaces and
the adjacency matrix has entries
$$
a_{(x_1,x_2),(y_1,y_2)} = a^{(1)}_{x_1,y_1} \delta_{x_2,o_2}\delta_{y_2,o_2} +
\delta_{x_1,o_1}\delta_{y_1,o_1}  a^{(2)}_{x_2,y_2} 
$$
i.e., 
$$
A = A_1\otimes P_2 + P_1\otimes A_2 
$$
where $P_i$ is the orthogonal projection of $\ell^2(V_{i})$ onto the
one-dimensional subspace spanned by the delta function $\delta_{o_i}$; see
\cite[Proposition 8.50]{MR2316893}.

The star product is associative and hence one may define by iteration
the star product $(\bG,o)=(V,E,o)$ of rooted graphs $(\bG_i,o_i)=(V_i,E_i,o_i),
i=1,2,\dots, N$. Suppose further that those graphs are finite and simple.  Then
the 
vertex set $V$ of $\bG$ can be regarded as a subset of $V_{1} \times
\cdots \times V_{N}$ and hence the adjacency matrix $A_\bG$ can be regarded as
an operator on $\ell^2(V_1) \otimes \cdots \otimes \ell^2(V_N)$. Under this
identification one has  
\begin{equation}\label{eq:boole_sum}
A_\bG = \sum_{i=1}^N P_1 \otimes P_2 \otimes \cdots \otimes P_{i-1} \otimes A_{\bG_i} \otimes P_{i+1} \otimes \cdots \otimes P_N,  
\end{equation} 
Let $\varphi$ be the vacuum state on $B(\ell^2(V))$: $\varphi(X) = \langle X \delta_o, \delta_o\rangle_{\ell^2(V)}$. 

\subsection{The comb product and its adjacency matrix}
\label{sec:combproduct}
Given a graph $\bG_1=(V_1,E_1)$ and a rooted graph $(\bG_2,o_2)=(V_2,E_2,o_2)$,
a new graph $\bG = \bG_1 \combprod (\bG_2,o_2)$
is defined by gluing a copy of $\bG_2$
to every vertex of $\bG_1$ at the root $o_2$:
The vertex set $V$ of $\bG$ is $V_1 \times V_2$ and two vertices $(x_1,x_2)$ and $(y_1,y_2)$ are connected by an edge if and only if either $x_1y_1 \in E_1$ and $x_2=y_2=o_2$, or $x_1 = y_1$ and $x_2 y_2 \in E_2$. 
If we further specify a root of $\bG_1$, then the natural root for the comb product is $(o_1,o_2)$, which makes the comb product associative (but non-commutative) in the category of rooted graphs; see Fig.\ \ref{fig:comb}.  
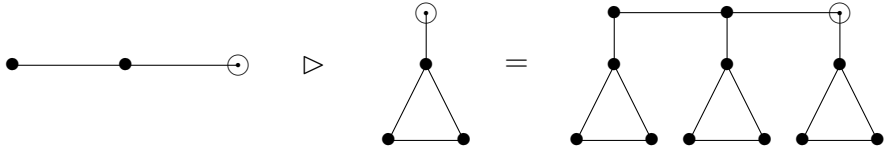
\begin{figure}
\begin{tikzpicture}
\draw (-2,0.5) -- (-0.5,0.5) -- (1,0.5); 
\node at (-0.5,0.5) {$\bullet$};
\node at (-2,0.5) {$\bullet$};
\node at (1,0.5) {$\odot$};
--------
\node at (2,0.5) {$\combprod$}; 
\draw (3,-0.5) -- (4,-0.5) -- (3.5,0.5) -- (3,-0.5); 
\draw (3.5,0.5) -- (3.5,1.2); 
\node at (3,-0.5) {$\bullet$};
\node at (4,-0.5) {$\bullet$};
\node at (3.5,0.5) {$\bullet$};
\node at (3.5,1.2) {$\odot$};
---------
\node at (4.7,0.5) {$=$}; 
--------
\draw (6,1.2) -- (7.5,1.2) -- (9,1.2); 
\draw (6,0.5) -- (6.5,-0.5) -- (5.5,-0.5) -- (6,0.5); 
\draw (6,0.5) -- (6,1.2); 
\draw (7.5,0.5) -- (7,-0.5) -- (8,-0.5) -- (7.5,0.5); 
\draw (7.5,0.5) -- (7.5,1.2); 
\draw (9,0.5) -- (8.5,-0.5) -- (9.5,-0.5) -- (9,0.5); 
\draw (9,0.5) -- (9,1.2); 
\node at (6,1.2) {$\bullet$};
\node at (7.5,1.2) {$\bullet$};
\node at (9,1.2) {$\odot$};
\node at (7.5,1-0.5) {$\bullet$};
\node at (6,1-0.5) {$\bullet$};
\node at (9,1-0.5) {$\bullet$};
\node at (5.5,0-0.5) {$\bullet$};
\node at (6.5,0-0.5) {$\bullet$};
\node at (7,0-0.5) {$\bullet$};
\node at (8,0-0.5) {$\bullet$};
\node at (8.5,0-0.5) {$\bullet$};
\node at (9.5,0-0.5) {$\bullet$};
\end{tikzpicture}
\caption{Comb product of rooted graphs}\label{fig:comb}
\end{figure}
The adjacency matrix now can be written as
$$
A_1\otimes P_2 + I_1\otimes A_2
$$
and 
by iteration one may define the comb product $(\bG,o)$ of a sequence of rooted graphs $(\bG_i,o_i)=(V_i,E_i,o_i)$, $i=1,2,\dots, N$. The adjacency matrix $A_\bG$ can then be regarded as an operator on $\ell^2(V_1) \otimes \cdots \otimes \ell^2(V_N)$ and it has the form
\begin{equation}\label{eq:monotone_sum}
A_\bG = \sum_{i=1}^N I_1 \otimes I_2 \otimes \cdots \otimes I_{i-1} \otimes A_{\bG_i} \otimes P_{i+1} \otimes \cdots \otimes P_N;  
\end{equation} 
see \cite[Proposition 8.38]{MR2316893}. 

For comb products of identical rooted graphs, Accardi, Ben Ghorbal and Obata used  monotone independence satisfied by the summands in \eqref{eq:monotone_sum} in order to study the asymptotics of the Green function of $\bG$ as $N\to \infty$; see the original article \cite[Theorem 5.1]{MR2085641} or the book \cite[Theorem 8.40]{MR2316893}.  On the other hand, for the star product, the summands in \eqref{eq:boole_sum} are Boolean independent, which provides another type of asymptotics of Green function; see the original article of Obata \cite[Theorem 3.7]{MR2062191} or the book \cite[Theorem 8.53]{MR2316893}. 

In the present paper we study the {\bf asymptotic behavior of eigenvalues or
  empirical eigenvalue distributions} of $A_\bG$ for large $N$ using the
asymptotics of the characteristic polynomial $\phi_\bG(z)$ or the Cauchy
transform $\Cauchy_\bG(z)$.

\subsection{Identities for the star product}
For the sake of notational convenience we denote by $\bG_1 \starprod \bG_2$ the star product of two rooted graphs $(\bG_1,o_1)$ and $(\bG_2,o_2)$. The Green function of the star product satisfies 
the following relation, which follows from the decomposition \eqref{eq:boole_sum} of the adjacency matrix into Boolean independent operators and the linearization formula for Boolean convolution in \cite[Section 2]{SW97}; see also \cite{MR2539549} for another proof of the latter. \begin{proposition} For  rooted graphs $(\bG_1,o_1)$ and $(\bG_2,o_2)$ the following formula holds. 
  \begin{align}
    \label{eq:FGamma1.Gamma2}
    F_{\bG_1\starprod\bG_2}(z) = F_{\bG_1}(z) + F_{\bG_2}(z) -z. 
  \end{align}
\end{proposition}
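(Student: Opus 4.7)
The plan is to compute $F_{\bG_1\starprod\bG_2}(z)$ directly via the Schur complement using the block structure of the star-product adjacency matrix, and then recognize the two resulting terms as $F_{\bG_1}(z)$ and $F_{\bG_2}(z)$ shifted by $z$. This is essentially a matrix-level derivation of the Boolean additivity of $K(z)=z-F(z)$, which is the route suggested by the hint following the statement.

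First, I would set up the block decomposition. Let $(\bG_i,o_i)$ have vertex set $V_i$ and adjacency matrix, with root in position~$1$, written
\begin{equation*}
  A_i =
  \begin{bmatrix}
    0 & b_i^*\\
    b_i & D_i
  \end{bmatrix},
\qquad i=1,2.
\end{equation*}
In $\bG_1\starprod\bG_2$ the only shared vertex is the (common) root $o$, so ordering the vertices as $o,\,V_1\setminus\{o_1\},\,V_2\setminus\{o_2\}$ the adjacency matrix has the $3\times 3$ block form
\begin{equation*}
  A_{\bG_1\starprod\bG_2}=
  \begin{bmatrix}
    0 & b_1^* & b_2^*\\
    b_1 & D_1 & 0\\
    b_2 & 0 & D_2
  \end{bmatrix},
\end{equation*}
the zero off-diagonal blocks reflecting the absence of edges between $V_1\setminus\{o_1\}$ and $V_2\setminus\{o_2\}$.

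Next I would apply the Schur complement from Section~2 to the $1{+}(d-1)$ partition, with the second diagonal block being the block-diagonal matrix $\mathrm{diag}(zI-D_1,\,zI-D_2)$. Since this matrix is block diagonal, its inverse is the block diagonal of the inverses, and \eqref{eq:GA=banach} gives
\begin{equation*}
  G_{\bG_1\starprod\bG_2}(z)^{-1}
  \;=\; z - b_1^*(zI-D_1)^{-1}b_1 - b_2^*(zI-D_2)^{-1}b_2.
\end{equation*}
Applied to each factor graph separately, the same Schur-complement computation already made in the excerpt identifies $F_{\bG_i}(z)=z-b_i^*(zI-D_i)^{-1}b_i$. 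Substituting yields
\begin{equation*}
  F_{\bG_1\starprod\bG_2}(z)
  = z - (z-F_{\bG_1}(z)) - (z-F_{\bG_2}(z))
  = F_{\bG_1}(z)+F_{\bG_2}(z)-z,
\end{equation*}
which is the claimed identity.

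There is no real obstacle: the whole argument is a one-line Schur-complement calculation once the correct block form is written down. The only point that needs a moment of care is the justification that the off-diagonal blocks between the two ``arms'' of the star product vanish, which is immediate from the definition of the star product (no edges run between $V_1\setminus\{o_1\}$ and $V_2\setminus\{o_2\}$). Equivalently, one can read this as the statement that $A_1\otimes P_2$ and $P_1\otimes A_2$ are Boolean independent with respect to the vacuum state at the root and invoke the linearization formula $K_{X+Y}=K_X+K_Y$ of \cite{SW97} for $K(z)=z-F(z)$; the Schur-complement derivation above is the matrix-level proof of exactly that linearization in this setting.
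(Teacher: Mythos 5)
Your proof is correct. The paper itself does not give a computation for this proposition: it simply observes that the summands $A_1\otimes P_2$ and $P_1\otimes A_2$ in \eqref{eq:boole_sum} are Boolean independent with respect to the vacuum state at the root and invokes the known linearization of the Boolean cumulant transform from \cite{SW97}. You instead prove the identity from scratch by writing the star-product adjacency matrix in the $3\times 3$ block form with the root in the first position, noting that the off-diagonal blocks between $V_1\setminus\{o_1\}$ and $V_2\setminus\{o_2\}$ vanish, and taking the Schur complement so that $F_{\bG_1\starprod\bG_2}(z)=z-b_1^*(zI-D_1)^{-1}b_1-b_2^*(zI-D_2)^{-1}b_2$, which together with $F_{\bG_i}(z)=z-b_i^*(zI-D_i)^{-1}b_i$ gives the claim. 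This is exactly the argument the authors record later, in the general operator setting, in Remark~\ref{rem:booleconv}: the Banachiewicz/Schur-complement formula applied to the block decomposition \eqref{eq:block} yields \eqref{eq:boole_conv}, of which your computation is the graph-level instance. So the two routes differ only in packaging: the paper's citation-based proof is shorter and emphasizes that the statement is an instance of Boolean convolution, while your version is self-contained, elementary, and makes the mechanism (the direct sum structure of the ``arms'' after removing the root) explicit; it also generalizes verbatim to arbitrary self-adjoint operators with the block structure \eqref{eq:block}, which is precisely how the paper reuses it.
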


The Cauchy transform of the star product is computed by the formula below. 
\begin{proposition} For  rooted graphs $(\bG_1,o_1)$ and $(\bG_2,o_2)$ the following formula holds. 
  \begin{align}
   \label{eq:tildeg+dlogGGamma1Gamma1}
  \Cauchy_{\bG_1\starprod\bG_2}(z) 
     +\frac{G_{\bG_1\starprod\bG_2}'(z)}{G_{\bG_1\starprod\bG_2}(z)}     
  &= \Cauchy_{\bG_1}(z) + \Cauchy_{\bG_2}(z)
     +\frac{G_{\bG_1}'(z)}{G_{\bG_1}(z)}
     +\frac{G_{\bG_2}'(z)}{G_{\bG_2}(z)}. 
  \end{align}
\end{proposition}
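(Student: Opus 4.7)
The plan is to reduce the identity to the already-proven relation \eqref{eq:gAt=gA+DlogzG}, using the fact that deleting the common root from $\bG_1 \starprod \bG_2$ yields a disconnected graph whose adjacency matrix is block diagonal.

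First I would observe that if $o$ denotes the common root of $\bG=\bG_1\starprod\bG_2$, then $\bG\setminus o$ is the disjoint union of $\bG_1\setminus o_1$ and $\bG_2\setminus o_2$. Choosing the root $o$ as the first basis vector, the adjacency matrix $A$ of $\bG$ has a decomposition as in \eqref{eq:Ablockdecomp} with $\bub{A} = A_{\bG_1\setminus o_1}\oplus A_{\bG_2\setminus o_2}$. Since the trace of the inverse of a block-diagonal matrix splits as a sum, we immediately get
\begin{equation*}
\Cauchy_{\bG\setminus o}(z) = \Cauchy_{\bG_1\setminus o_1}(z) + \Cauchy_{\bG_2\setminus o_2}(z),
\end{equation*}
and similarly for $\RC$ after subtracting the scalar $1/z$ terms.

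Next I would apply the identity \eqref{eq:gAt=gA+DlogzG} three times: once to the adjacency matrix of $\bG_1\starprod\bG_2$ with the root $o$, and once to each of $\bG_1$ and $\bG_2$ with their respective roots placed as the first basis vector. This yields
\begin{align*}
\RC_{\bG\setminus o}(z) &= \RC_{\bG_1\starprod\bG_2}(z) + \frac{1}{z} + \frac{G_{\bG_1\starprod\bG_2}'(z)}{G_{\bG_1\starprod\bG_2}(z)}, \\
\RC_{\bG_i\setminus o_i}(z) &= \RC_{\bG_i}(z) + \frac{1}{z} + \frac{G_{\bG_i}'(z)}{G_{\bG_i}(z)}, \quad i=1,2.
\end{align*}

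Substituting the additivity relation for $\RC_{\bG\setminus o}$ into the first of these and rearranging yields the claim up to a bookkeeping of the $1/z$ terms. To convert $\RC$ back to $\Cauchy$ via \eqref{eq:modcauchytransform} I would use that the number of vertices in $\bG_1\starprod\bG_2$ is $|V_1|+|V_2|-1$, so the three contributions $-|V_i|/z$ on the right and $-(|V_1|+|V_2|-1)/z$ on the left, together with the extra $+1/z$ arising from summing the two copies of \eqref{eq:gAt=gA+DlogzG} against the single copy on the left, cancel exactly. There is no real obstacle here; the only point requiring care is this cancellation, which works precisely because the star product identifies exactly one vertex.
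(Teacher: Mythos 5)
Your argument is correct: the additivity of $\RC$ over the block-diagonal matrix $A_{(\bG_1\starprod\bG_2)\setminus o}=A_{\bG_1\setminus o_1}\oplus A_{\bG_2\setminus o_2}$, combined with three applications of \eqref{eq:gAt=gA+DlogzG}, does give the identity, and your accounting of the $1/z$ terms is right (both sides carry $-(|V_1|+|V_2|-2)/z$ after conversion, so they cancel). However, this is not the route the paper takes for this proposition. The paper stays at the level of characteristic polynomials: it uses the factorization $\phi_{(\bG_1\starprod\bG_2)\setminus o}=\phi_{\bG_1\setminus o_1}\,\phi_{\bG_2\setminus o_2}$, rewrites it via the Schur identity \eqref{eq:phiGamma=GGammaphiGamma-o} as $\phi_{\bG_1\starprod\bG_2}G_{\bG_1\starprod\bG_2}=\phi_{\bG_1}G_{\bG_1}\,\phi_{\bG_2}G_{\bG_2}$, and then takes a single logarithmic derivative using \eqref{eq:logderiv}. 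Your proof is the resolvent-trace counterpart of this: \eqref{eq:gAt=gA+DlogzG} is precisely the log-derivative of \eqref{eq:phiGamma=GGammaphiGamma-o} at the matrix level, so you are differentiating first and factorizing afterwards, while the paper factorizes first and differentiates once at the end. The paper's polynomial route is shorter and yields the Schwenk-type formula for $\phi_{\bG_1\starprod\bG_2}$ as a byproduct; your route has the advantage of not mentioning determinants at all and of extending verbatim beyond finite graphs — indeed it is essentially the ``analytic proof'' the authors later give for the general convolution formula in Theorem \ref{thm:convolution} (where $(A+B)\bub{}=\bub{A}\oplus\bub{B}$ and one concludes with \eqref{eq:gAt=gA+DlogzG}), which the remark following the proposition already announces as an alternative proof in a more general setting.
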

\begin{remark}
Later we will give two alternative proofs in a more general setting; see Theorem \ref{thm:convolution}.
\end{remark}
\begin{proof}
The key is the simple identity 
\begin{equation}
 \label{eq:phiGamma1.Gamma2minuso}
    \phi_{(\bG_1\starprod\bG_2)\setminus o}(x) 
= \phi_{\bG_1\setminus o_1}(x) \, \phi_{\bG_2\setminus o_2}(x), 
\end{equation}
for the star product,
which follows from the fact that the removal of the root splits the graph into two disjoint connected components
  \begin{equation*}
    (\bG_1\starprod\bG_2)\setminus o=(\bG_1\setminus o_1)\cup(\bG_2\setminus o_2).   
  \end{equation*}
  Using the Schur identity \eqref{eq:phiGamma=GGammaphiGamma-o}
  we can rewrite  \eqref{eq:phiGamma1.Gamma2minuso} as
  \begin{equation}\label{eq:star}
    \phi_{\bG_1\starprod\bG_2}(z)\,
    G_{\bG_1\starprod\bG_2}(z)
    = 
    \phi_{\bG_1}(z)\,
    G_{\bG_1}(z)\,
    \phi_{\bG_2}(z)\,
    G_{\bG_2}(z)
  \end{equation}
  and taking the logarithmic derivative of \eqref{eq:star} together with \eqref{eq:logderiv} yields 
   \eqref{eq:tildeg+dlogGGamma1Gamma1}.
\end{proof}

Finally, the characteristic polynomial of the star product satisfies the following 
identity proved by Schwenk, for which we give an alternative proof.

\begin{theorem}[{\cite[Corollary 2b]{Sch74}, \cite[Lemma~9.1]{GM76b}}] For  rooted graphs $(\bG_1,o_1)$ and $(\bG_2,o_2)$ the following formula holds. 
  \begin{equation}
    \phi_{\bG_1\starprod\bG_2}(x) 
     = \phi_{\bG_1}(x)\,
       \phi_{\bG_2\setminus o_2}(x)
       +
       \phi_{\bG_1\setminus o_1}(x) \,
       \phi_{\bG_2}(x)
       - 
       x\phi_{\bG_1\setminus o_1}(x) \,
       \phi_{\bG_2\setminus o_2}(x). 
  \end{equation}
\end{theorem}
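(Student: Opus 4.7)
The proof should be a direct consequence of the two identities already established for the star product, namely the factorization \eqref{eq:phiGamma1.Gamma2minuso} of the characteristic polynomial after removing the root, and the linearization \eqref{eq:FGamma1.Gamma2} for the reciprocal Green function. The Schur identity \eqref{eq:phiGamma=GGammaphiGamma-o} provides the bridge between these two pieces of data and the characteristic polynomial of the full graph.

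My plan is as follows. First I would apply \eqref{eq:phiGamma=GGammaphiGamma-o} to the rooted graph $\bG_1 \starprod \bG_2$ (with its canonical root) to write
\[
\phi_{\bG_1\starprod\bG_2}(x) = F_{\bG_1\starprod\bG_2}(x)\,\phi_{(\bG_1\starprod\bG_2)\setminus o}(x).
\]
Then I would substitute the factorization \eqref{eq:phiGamma1.Gamma2minuso} into the rightmost factor and the linearization \eqref{eq:FGamma1.Gamma2} into $F_{\bG_1\starprod\bG_2}(x)$, obtaining
\[
\phi_{\bG_1\starprod\bG_2}(x) = \bigl(F_{\bG_1}(x)+F_{\bG_2}(x)-x\bigr)\,\phi_{\bG_1\setminus o_1}(x)\,\phi_{\bG_2\setminus o_2}(x).
\]
Finally I would distribute and repackage each of the first two summands using \eqref{eq:phiGamma=GGammaphiGamma-o} once more, now applied to $\bG_1$ and $\bG_2$ separately, i.e.\ $F_{\bG_i}(x)\,\phi_{\bG_i\setminus o_i}(x) = \phi_{\bG_i}(x)$, which yields exactly Schwenk's formula.

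The only potentially delicate point is that \eqref{eq:phiGamma=GGammaphiGamma-o} is stated away from the real line (or more generally wherever $F_\bG$ is defined as a meromorphic function), whereas the claim is a polynomial identity in $x$. This is easily handled: both sides of the target equation are polynomials in $x$, and the derivation above shows that they agree on $\C\setminus\R$ minus the finite set of poles of the $F_{\bG_i}$, which is a set with an accumulation point; hence they agree as polynomials. Apart from this brief analytic-continuation remark, there is no real obstacle—the combinatorics of removing the root (which disconnects the star product into $\bG_1\setminus o_1$ and $\bG_2\setminus o_2$) and the Boolean linearization formula have already done all the work.
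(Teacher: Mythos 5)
Your proposal is correct and takes essentially the same route as the paper: both proofs combine the root-removal factorization \eqref{eq:phiGamma1.Gamma2minuso} (equivalently \eqref{eq:star}), the Boolean linearization \eqref{eq:FGamma1.Gamma2}, and the Schur identity \eqref{eq:phiGamma=GGammaphiGamma-o}, differing only in whether one carries the factors as $\phi_{\bG_i}G_{\bG_i}$ or as $\phi_{\bG_i\setminus o_i}$ before the final substitution. Your closing remark on promoting the identity from $\C\setminus\R$ (minus poles) to a polynomial identity is a harmless extra precaution the paper leaves implicit.
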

\begin{proof}
Equations \eqref{eq:star} and \eqref{eq:FGamma1.Gamma2} give rise to 
 \begin{align*}
 \phi_{\bG_1 \starprod \bG_2}(x)  
 &=  \phi_{\bG_1}(x) \phi_{\bG_2}(x)G_{\bG_1}(x) G_{\bG_2}(x) (F_{\bG_1}(x) + F_{\bG_2}(x)-x) \\
&=   \phi_{\bG_1}(x) \phi_{\bG_2}(x) (G_{\bG_1}(x) + G_{\bG_2}(x)-xG_{\bG_1}(x)G_{\bG_2}(x)). 
 \end{align*}  
Substituting the formula \eqref{eq:phiGamma=GGammaphiGamma-o}, $G_{\bG_i}(x) = \phi_{\bG_i\setminus o}(x)/\phi_{\bG_i}(x)$, into the above yields the desired formula. 
\end{proof}

\subsection{Identities for the comb product}
In this section, the comb product is denoted simply by $\bG_1 \combprod \bG_2$, the root being omitted for simplicity, for a graph $\bG_1$ and a rooted graph $(\bG_2,o_2)$.  The relation between the Green functions is 
simple and follows from the decomposition \eqref{eq:monotone_sum} of the adjacency matrix together with Muraki's formula \cite[Theorem 3.1]{Mur00}; see also \cite[Theorem 3.2]{MR2421124} for another proof of Muraki's formula.  
\begin{proposition} For  rooted graphs $(\bG_1,o_1)$ and $(\bG_2,o_2)$ the following formula holds: 
  \begin{equation}
    \label{eq:FGammaH=FGammaFH}
    F_{\bG_1 \combprod \bG_2}(z)  = F_{\bG_1}(F_{\bG_2}(z)). 
  \end{equation}
\end{proposition}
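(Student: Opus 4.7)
The plan is to follow the hint immediately preceding the statement: apply the decomposition \eqref{eq:monotone_sum} with $N=2$ and invoke Muraki's monotone convolution formula. Concretely, on $\hilbH = \ell^2(V_1) \otimes \ell^2(V_2)$ with vacuum vector $\vac = \delta_{o_1}\otimes\delta_{o_2}$, we write
\begin{equation*}
A_{\bG_1\combprod\bG_2} = X_1 + X_2, \qquad X_1 = A_{\bG_1}\otimes P_2, \quad X_2 = I_1 \otimes A_{\bG_2},
\end{equation*}
and the state is $\varphi(T) = \langle T\vac,\vac\rangle$.

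The first step is to identify the individual reciprocal Green functions at $\vac$. For $X_1$, the idempotence $P_2^2 = P_2$ and the identity $P_2\delta_{o_2} = \delta_{o_2}$ give $(A_{\bG_1}\otimes P_2)^n = A_{\bG_1}^n\otimes P_2$ for $n\ge 1$, so a termwise geometric series computation yields
\begin{equation*}
\langle (zI-X_1)^{-1}\vac,\vac\rangle = \sum_{n\ge 0} z^{-n-1}\langle A_{\bG_1}^n\delta_{o_1},\delta_{o_1}\rangle = G_{\bG_1}(z).
\end{equation*}
For $X_2$ the tensor structure makes things even easier: $(I_1\otimes A_{\bG_2})^n = I_1\otimes A_{\bG_2}^n$, so the Green function at $\vac$ collapses to $G_{\bG_2}(z)$. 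Consequently $F_{X_1}=F_{\bG_1}$ and $F_{X_2}=F_{\bG_2}$.

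The second step is to observe that $X_1$ and $X_2$ are monotone independent with respect to $\varphi$, with $X_1$ playing the role of the ``outer'' algebra. This is the well-known content of the operator realization of monotone independence on tensor products that underlies \cite[Proposition 8.38]{MR2316893}: $X_2$ is free to act anywhere, but $X_1$ acts only on the distinguished subspace $\ell^2(V_1)\otimes \C\delta_{o_2}$ and collapses vectors outside of it, which is precisely the asymmetric relation defining monotone independence. Given this, Muraki's monotone convolution formula \cite[Theorem 3.1]{Mur00} gives
\begin{equation*}
F_{X_1+X_2}(z) = F_{X_1}(F_{X_2}(z)) = F_{\bG_1}(F_{\bG_2}(z)),
\end{equation*}
which is the claim.

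The main obstacle, if one wishes to make the argument self-contained rather than just cite Muraki, would be to establish the monotone independence relation directly; one does this by checking, for any polynomial $p$ in $X_2$ and any polynomials $q_0,q_1$ in $X_1$, the factorization $\varphi(q_0(X_1) p(X_2) q_1(X_1)) = \varphi(p(X_2))\varphi(q_0(X_1)q_1(X_1))$ plus the algebraic identity $X_2 X_1 X_2' = X_2 X_2' \cdot X_1$-type reductions coming from $P_2 A_{\bG_2}^n P_2 = \langle A_{\bG_2}^n\delta_{o_2},\delta_{o_2}\rangle P_2$. This reduction is a direct computation that the projection $P_2$ ``absorbs'' powers of $A_{\bG_2}$ into scalars, exactly the algebraic shadow of monotone independence; once it is in place, the remainder of the proof is the two short geometric-series computations and the invocation of Muraki's formula.
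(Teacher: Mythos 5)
Your proof is correct and is essentially the paper's own argument: the proposition is justified there precisely by combining the decomposition \eqref{eq:monotone_sum} (with $N=2$, i.e.\ $A_{\bG_1\combprod\bG_2}=A_{\bG_1}\otimes P_2+I_1\otimes A_{\bG_2}$) with Muraki's formula $F_{X_1+X_2}=F_{X_1}\circ F_{X_2}$ for monotone independent summands. Your additional checks that $F_{X_i}=F_{\bG_i}$ and that $P_2A_{\bG_2}^nP_2=\langle A_{\bG_2}^n\delta_{o_2},\delta_{o_2}\rangle P_2$ merely make explicit what the paper delegates to the cited references, so the two arguments coincide.
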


For the characteristic polynomial 
Schwenk proved the following relation by combinatorial arguments;
we give an algebraic proof based on the simpler relation 
\eqref{eq:FGammaH=FGammaFH}.
\begin{theorem}[{\cite[Theorem 5]{Sch74}}] \label{thm:Schwenk}  Let $\bG$ be a graph on $d$ vertices and $(\bH,o)$ be a  rooted graph. Then 
  \begin{equation}\label{eq:Schwenk}
    \phi_{\bG \combprod \bH}(x) 
    = \phi_{\bH\setminus o}(x)^d \phi_\bG(\phi_\bH(x)/\phi_{\bH\setminus o}(x))
    = \phi_{\bH\setminus o}(x)^d \phi_\bG(F_\bH(x)). 
  \end{equation}
\end{theorem}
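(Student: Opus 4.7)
The plan is to compute $\det(xI - A_{\bG \combprod \bH})$ directly from the tensor-product decomposition of the adjacency matrix and use the Sylvester determinant identity (push-through) to collapse a determinant on $\ell^2(V_1) \otimes \ell^2(V_2)$ to a determinant on $\C^d$.

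By \eqref{eq:monotone_sum} with $N=2$, the adjacency matrix of $\bG \combprod \bH$ equals $A_\bG \otimes P_{o_2} + I_d \otimes A_\bH$, where $P_{o_2}$ is the rank-one projection onto $\delta_{o_2}$ and $d = |V(\bG)|$. Setting $B = xI - A_\bH$, which is invertible for $x$ outside the spectrum of $A_\bH$, one writes
\[
  xI - A_{\bG \combprod \bH} = I_d \otimes B - A_\bG \otimes P_{o_2}
  = (I_d \otimes B)\bigl(I - A_\bG \otimes B^{-1} P_{o_2}\bigr),
\]
so $\phi_{\bG \combprod \bH}(x) = \phi_\bH(x)^d \, \det\bigl(I - A_\bG \otimes B^{-1} P_{o_2}\bigr)$ for such $x$.

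Since $B^{-1} P_{o_2}$ has rank one, the perturbation $A_\bG \otimes B^{-1} P_{o_2}$ factors as $XY$ where $X : \C^d \to \C^d \otimes \ell^2(V(\bH))$ is $v \mapsto A_\bG v \otimes B^{-1}\delta_{o_2}$ and $Y : \C^d \otimes \ell^2(V(\bH)) \to \C^d$ is $v \otimes w \mapsto \langle \delta_{o_2}, w\rangle\, v$. The push-through identity $\det(I - XY) = \det(I - YX)$ reduces the determinant to one on $\C^d$: the composition $YX$ acts as $v \mapsto \langle \delta_{o_2}, B^{-1}\delta_{o_2}\rangle\, A_\bG v = G_\bH(x)\,A_\bG v$ by the definition \eqref{eq:greenfct} of the Green function. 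Hence
\[
  \det\bigl(I - A_\bG \otimes B^{-1}P_{o_2}\bigr) = \det\bigl(I_d - G_\bH(x) A_\bG\bigr) = G_\bH(x)^d\, \phi_\bG\!\bigl(G_\bH(x)^{-1}\bigr) = G_\bH(x)^d\, \phi_\bG(F_\bH(x)).
\]

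Combining the two steps yields
\[
  \phi_{\bG \combprod \bH}(x) = \phi_\bH(x)^d \, G_\bH(x)^d \, \phi_\bG(F_\bH(x)) = \bigl(\phi_\bH(x) G_\bH(x)\bigr)^d \phi_\bG(F_\bH(x)),
\]
and the Schur identity \eqref{eq:phiGamma=GGammaphiGamma-o} rewrites $\phi_\bH(x) G_\bH(x) = \phi_\bH(x)/F_\bH(x) = \phi_{\bH \setminus o}(x)$, giving exactly \eqref{eq:Schwenk}. The identity is established first for $x$ outside the spectrum of $A_\bH$ and then extends to all $x \in \C$ by polynomiality of both sides. The only subtle step is the rank-one factorization needed to apply the push-through identity, but this is essentially bookkeeping; the substantive algebraic content is the observation that the ``reduced'' determinant sees $\bH$ only through the scalar $G_\bH(x)$, which is why the composition $\phi_\bG \circ F_\bH$ appears.
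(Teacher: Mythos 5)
Your proof is correct, but it follows a genuinely different route from the paper. The paper argues by induction on the number $d$ of vertices of $\bG$: it removes the root of $\bG \combprod \bH$, observes that this splits off one copy of $\bH\setminus o$ so that $\phi_{(\bG \combprod \bH)\setminus o''} = \phi_{(\bG\setminus o')\combprod\bH}\,\phi_{\bH\setminus o}$, and then combines the Schur identity \eqref{eq:phiGamma=GGammaphiGamma-o} with the composition law $F_{\bG\combprod\bH}=F_\bG\circ F_\bH$ of \eqref{eq:FGammaH=FGammaFH} (which rests on Muraki's monotone convolution formula) to close the induction. You instead compute $\det(xI-A_{\bG\combprod\bH})$ in one stroke from the tensor decomposition $A_\bG\otimes P_{o_2}+I_d\otimes A_\bH$, factoring out $I_d\otimes(xI-A_\bH)$ and using the push-through identity $\det(I-XY)=\det(I-YX)$ to collapse the remaining determinant to $\det\bigl(I_d-G_\bH(x)A_\bG\bigr)$; this is self-contained (no induction, no appeal to \eqref{eq:FGammaH=FGammaFH}), works verbatim for an arbitrary $d\times d$ matrix in place of $A_\bG$ and any self-adjoint operator with a distinguished vector in place of $(A_\bH,\delta_{o_2})$, and is very much in the spirit of the Schur-complement philosophy the paper advertises (indeed $\det(I-XY)=\det(I-YX)$ is Jacobi's identity \eqref{eq:detM=detM/DdetD} applied to a $2\times2$ block matrix). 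The paper's route, by contrast, is shorter once \eqref{eq:FGammaH=FGammaFH} is available and makes explicit how the characteristic-polynomial identity is a shadow of the monotone $F$-composition. Two small points to tighten: the map $Y$ should be defined as the linear coordinate functional $v\otimes w\mapsto w_{o_2}\,v$ rather than via the (conjugate-linear) inner product, though only the matrix entry $(B^{-1})_{o_2,o_2}=G_\bH(x)$ enters in the end; and the step $\det\bigl(I_d-G_\bH(x)A_\bG\bigr)=G_\bH(x)^d\,\phi_\bG\bigl(G_\bH(x)^{-1}\bigr)$ additionally requires $G_\bH(x)\neq0$, so you should work with, say, $x$ real and large before invoking polynomiality of both sides of \eqref{eq:Schwenk} (in the form $\phi_{\bH\setminus o}(x)^d\,\phi_\bG(\phi_\bH(x)/\phi_{\bH\setminus o}(x))$) to extend to all $x$.
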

\begin{proof}
We proceed by induction.
Fix an arbitrary vertex $o'$ of $\bG$ as a root.
Then removing the root $o''$ from $\bG \combprod \bH$ splits off an extra copy of $\bH\setminus o$ (cf.\ Fig.\ \ref{fig:comb})
$$
(\bG \combprod \bH)\setminus o'' = ((\bG\setminus o') \combprod \bH)\cup (\bH\setminus o)
$$
and therefore
$$
\phi_{(\bG \combprod \bH)\setminus o''}(x)  = \phi_{(\bG\setminus o') \combprod \bH}(x)\,\phi_{\bH\setminus o}(x). 
$$
We proceed with identities \eqref{eq:phiGamma=GGammaphiGamma-o} and 
\eqref{eq:FGammaH=FGammaFH} to conclude by induction
\begin{align*}
  \phi_{\bG \combprod \bH}(x)
  &=\phi_{(\bG \combprod \bH)\setminus o'}(x)
    F_{\bG \combprod \bH}(x)
    \\
   &= \phi_{(\bG\setminus o')\combprod \bH}(x) \,
      \phi_{\bH\setminus o}(x) \,
      F_{\bG \combprod \bH}(x) \\
   &= \phi_{\bH\setminus o}(x)^{d-1}\,
      \phi_{\bG\setminus o'}(F_\bH(x))\,
      \phi_{\bH\setminus o}(x) \,
      F_\bG(F_\bH(x))\\
   &= \phi_{\bH\setminus o}(x)^{d}\,
      \phi_\bG(F_\bH(x)). 
\end{align*}
\end{proof}
Finally, formula \eqref{eq:Schwenk} gives rise to an equivalent formula for the renormalized Cauchy transform. 

\begin{proposition} In the setting of Theorem~\ref{thm:Schwenk}, one has 
\[
\RC_{\bG \combprod \bH}(z)  = d \RC_{\bG} (z) +F_{\bH}'(z) \, \RC_{\bG}(F_{\bH}(z)). 
\]
\end{proposition}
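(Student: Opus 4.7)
The plan is to derive the identity from Schwenk's formula \eqref{eq:Schwenk} by taking a logarithmic derivative, and then to rewrite everything in terms of the renormalized Cauchy transform $\RC$ using \eqref{eq:modcauchytransform} and the bordering identity \eqref{eq:gAt=gA+DlogzG}. This is almost entirely book-keeping: once the ingredients are assembled, the computation is a single chain of substitutions.

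First I take the logarithm of \eqref{eq:Schwenk}:
\[
\log \phi_{\bG \combprod \bH}(z) = d \log \phi_{\bH \setminus o}(z) + \log \phi_{\bG}(F_{\bH}(z)),
\]
and differentiate both sides. Using $\Cauchy_{\bK}(z) = \phi_{\bK}'(z)/\phi_{\bK}(z)$ from \eqref{eq:logderiv}, and the chain rule on the right-most term, this yields
\[
\Cauchy_{\bG \combprod \bH}(z) = d\, \Cauchy_{\bH \setminus o}(z) + F_{\bH}'(z)\, \Cauchy_{\bG}(F_{\bH}(z)).
\]

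Next I convert each Cauchy transform to its renormalized counterpart via \eqref{eq:modcauchytransform}. Let $d$, $h$ be the number of vertices of $\bG$ and $\bH$; then $\bG \combprod \bH$ has $dh$ vertices and $\bH \setminus o$ has $h-1$ vertices. Substituting $\Cauchy_{\bK}(z) = \RC_{\bK}(z) + (\# \text{vertices})/z$ (with argument $F_{\bH}(z)$ for the $\bG$ term) collects the elementary singularities into
\[
\RC_{\bG \combprod \bH}(z) = d\, \RC_{\bH \setminus o}(z) - \frac{d}{z} + F_{\bH}'(z)\, \RC_{\bG}(F_{\bH}(z)) + \frac{d\, F_{\bH}'(z)}{F_{\bH}(z)},
\]
after the $dh/z$ on the left is balanced by $d(h-1)/z$ and the $d/F_{\bH}(z)$ term.

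The last step is to replace $\RC_{\bH \setminus o}$ with $\RC_{\bH}$. This is exactly what the deletion-of-the-root identity \eqref{eq:gAt=gA+DlogzG} provides: applied to the adjacency matrix of $\bH$ (whose top-left entry corresponds to the root $o_2$), it reads
\[
\RC_{\bH \setminus o}(z) = \RC_{\bH}(z) + \frac{1}{z} + \frac{G_{\bH}'(z)}{G_{\bH}(z)} = \RC_{\bH}(z) + \frac{1}{z} - \frac{F_{\bH}'(z)}{F_{\bH}(z)},
\]
since $F_{\bH} = 1/G_{\bH}$. Plugging this into the previous display, the $\pm d/z$ and $\pm d\, F_{\bH}'(z)/F_{\bH}(z)$ contributions cancel, leaving the asserted identity. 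The one mild subtlety worth a line of comment is the bookkeeping of the constant (zeroth-moment) terms, which is the only reason the proof is more than one line; no real obstacle arises.
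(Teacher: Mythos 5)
Your proof is correct and follows essentially the same route as the paper: logarithmic differentiation of Schwenk's formula \eqref{eq:Schwenk} followed by conversion to renormalized Cauchy transforms, the only cosmetic difference being that the paper absorbs the root-deletion step via $\phi_{\bH\setminus o}=\phi_\bH/F_\bH$ from \eqref{eq:phiGamma=GGammaphiGamma-o} before differentiating, whereas you keep $\Cauchy_{\bH\setminus o}$ and invoke \eqref{eq:gAt=gA+DlogzG} afterwards — the same Schur-complement identity either way. One caveat: what your computation (and the paper's own proof) actually yields is $\RC_{\bG\combprod\bH}(z)=d\,\RC_{\bH}(z)+F_{\bH}'(z)\,\RC_{\bG}(F_{\bH}(z))$, with $\RC_{\bH}$ in the first term rather than the printed $\RC_{\bG}$; the statement as printed is a typo (it already fails when $\bG$ is a single vertex, and the corrected form is the one consistent with Theorem~\ref{thm:CM_convolution}), so your closing claim that the cancellation leaves ``the asserted identity'' should be understood as proving the corrected statement, not the misprinted one.
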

\begin{remark}
 Later we will give  two more proofs in a more general setting; see Theorem \ref{thm:CM_convolution}. 
\end{remark}
\begin{proof}
Combining \eqref{eq:logderiv},  \eqref{eq:Schwenk} and \eqref{eq:phiGamma=GGammaphiGamma-o} yields
\begin{align*}
\Cauchy_{\bG \combprod \bH}(z) 
&= d  \frac{d}{dz}\log \frac{\phi_{\bH}(z)}{F_{\bH} (z)}+F_{\bH}'(z) \, \Cauchy_{\bG}(F_{\bH}(z))  \\
&= d \left(\Cauchy_\bH(z) -\frac{F_\bH'(z)}{F_{\bH}(z)}\right)+F_{\bH}'(z) \, \Cauchy_{\bG}(F_{\bH}(z)),  
\end{align*}
which can be rewritten as
\[
  \RC_{\bG \combprod \bH}(z) + \frac{d d'}{z}
  = d \left( \RC_{\bH} (z)  + \frac{d'}{z} -\frac{F_\bH'(z)}{F_{\bH}(z)}\right) +F_{\bH}'(z)\left(\RC_{\bG}(F_{\bH}(z)) +\frac{d}{F_\bH(z)}\right), 
\]
where $d'$ is the number of vertices of $\bH$. 
\end{proof}

\section{Cyclic-Boolean independence}\label{sec3}

In order to study the eigenvalues of the adjacency matrix of star product graphs, we will compute traces of powers of the adjacency matrix. These computations can be abstracted and formulated as a new notion of independence, which we call cyclic-Boolean independence.

\subsection{Definition and example}

The definition of cyclic-Boolean independence is motivated by the star product
from Section~\ref{sec:starproduct}, which can be extended to the general setting of Hilbert spaces as follows.

\begin{example}\label{exa:CB}
Let $H_i, i\in \N,$ be Hilbert spaces with distinguished unit vectors $\xi_i \in H_i$, $P_i\colon H_i \to H_i$ the orthogonal projection onto $\mathbb C \xi_i$, $T(H_i)$ the $*$-algebra of trace-class operators on $H_i$ and $\varphi_i$ the vector state on $B(H_i)$ defined by $\varphi_i(A) = \langle A \xi_i, \xi_i\rangle$. 

Let $H= H_1 \otimes \cdots \otimes H_N$, $\xi = \xi_1 \otimes \cdots \otimes \xi_N$ and $\varphi$ the vacuum state on $B(H)$ defined by $\xi$. 
Let $\pi_i \colon B(H_i) \to B(H)$ be the $*$-homomorphism defined by 
\begin{equation}
  \label{eq:embedbool}
\pi_i(A) = P_1 \otimes \cdots \otimes P_{i-1} \otimes A \otimes P_{i+1} \otimes \cdots \otimes P_N. 
\end{equation}
The family of $\ast$-subalgebras $\{\pi_i(B(H_i))\}_{i=1}^N$ is Boolean independent in $(B(H), \varphi)$; e.g.\ see \cite[Theorem 8.8]{MR2316893}.  
Furthermore, we compute the mixed moments with respect to the trace.  A key formula is 
\begin{equation}\label{eq:proj}
P_i A P_i = \varphi_i(A)P_i, \qquad A \in B(H_i).  
\end{equation}
For any cyclically alternating tuple $(k_1,\dots, k_n) \in \N^n$, namely those
satisfying $k_1 \neq k_2 \neq \cdots \neq k_n \neq k_1$, and for any $A_i \in
T(H_{k_i})$ a direct computation using formula \eqref{eq:proj} yields
$$
\Tr_{H}(\pi_{k_1}(A_1) \cdots \pi_{k_n} (A_n)) = 
\begin{cases} 
\Tr_{H_{k_1}}(A_1), &n=1, \\ \varphi_{k_1}(A_1)\,\varphi_{k_2}(A_2) \cdots \varphi_{k_n}(A_n), & n \geq2. 
\end{cases}
$$
\end{example}
Let us raise this identity to an abstract concept.

\begin{definition} Let $\mathcal A$ be a $\ast$-algebra over $\C$,  $\varphi$ a positive linear functional on $\mathcal A$ and $\omega$ a positive tracial linear functional on $\mathcal A$. The triplet $(\alg{A},\varphi,\omega)$ is called a \emph{cyclic non-commutative probability space} (cncps). 
The \emph{distribution} of a self-adjoint element $a \in \alg{A}$ is the data $\{(\varphi(a^n), \omega(a^n)): n\ge1\}$. 
\end{definition}
\begin{definition} Let $(\alg{A},\varphi,\omega)$ be a cncps. A family of
  $\ast$-subalgebras $\{\mathcal A_k\}_{k \in K}$ is said to be
  \emph{cyclic-Boolean independent} if 
\begin{enumerate}[\rm(i)] 

\item it is Boolean independent with respect to $\varphi$, that is, for any $n\geq2$, alternating tuple $(k_1,\dots, k_n) \in K^n$ (namely, with $k_1 \neq k_2 \neq \cdots  \neq k_n)$ and $a_i \in \mathcal{A}_{k_i}, i=1,2,\dots,n$, one has the factorization  
$$
\varphi(a_1 \cdots a_n) = \varphi(a_1)\varphi(a_2) \cdots \varphi(a_n);  
$$

\item for any $n\ge1$, any cyclically alternating tuple $(k_1,\dots, k_n) \in K^n$ and any choice of $a_i \in \mathcal{A}_{k_i}, i=1,2,\dots,n$, one has 
$$
\omega(a_1 a_2 \cdots a_n) = 
\begin{cases} 
\omega(a_1), &n=1, \\ 
\varphi(a_1)\varphi(a_2) \cdots \varphi(a_n), & n \geq2. 
\end{cases}
$$
\end{enumerate}
A family of
  elements $\{a_k\}_{k\in K}$ of $\alg{A}$ is said to be cyclic-Boolean
  independent if this is the case for $\{\alg{A}_k\}_{k\in K}$, where $\alg{A}_k$ is the $\ast$-subalgebra generated by $a_k$ without unit. 
\end{definition}

\begin{example}
Suppose that $\{a,b,c\}$ is cyclic-Boolean independent in $(\mathcal{A},\varphi,\omega)$. Then 
\[
\varphi(ba^2bc^2b) = \varphi(b)\varphi(a^2)\varphi(b)\varphi(c^2)\varphi(b)
\]
and 
\[
\omega(ba^2bc^2b) = \varphi(b^2)\varphi(a^2)\varphi(b)\varphi(c^2). 
\]
\end{example}

Another operator model occurs on star products of Hilbert spaces. 
\begin{example}
\label{ex:star_product}
  Let $\hilbH_i$ be separable Hilbert spaces with distinguished unit vectors
  $\xi_i$ as above and $\bub{\hilbH}_i=(\C\xi_i)^\perp$.
  The
\emph{star product} of the Hilbert spaces $\hilbH_i$ is the direct sum
  $$
  \hilbH=\C\xi\oplus \bigoplus_i \bub{\hilbH}_i
  .
  $$
  Then each $\hilbH_i$ can be identified with the subspace
  $\C\xi\oplus\bub{\hilbH}_i\subseteq   \hilbH$
  and there is a canonical representation
  of  $B(\hilbH_i)$ on $\hilbH$ which acts by simply annihilating the 
  complement of $H_i$.
  More precisely we decompose $\hilbH$ as a direct sum
  $\hilbH\simeq\hilbH_i\oplus\hilbH_i^\perp$
  where $\hilbH^\perp_i = \bigoplus_{j\ne i}\bub{\hilbH}_j$ 
  and  define the representation
  $\pi_i(X)=X\oplus 0$.
  Then the algebras $\alg{A}_i=\pi_i(B(\hilbH_i))$ are Boolean independent
  with respect to the vacuum expectation $\varphi=\langle\ldotp \xi,\xi\rangle$
  and
  moreover, the algebras $\alg{A}_i$ are cyclic-Boolean independent
  with respect to the trace.
  Indeed, let $P_0\in B(\hilbH)$  be the projection onto $\C\xi$
  and $P_i$ the projections onto $\bub{\hilbH}_i$; 
  then $P_0,P_1,P_2,\dots$ form a partition of unity
  and by definition we have $X=(P_0+P_i)X(P_0+P_i)$ for all $X \in \alg{A}_i$.
  Let $X_1X_2\dots X_n$ be a cyclically alternating product
  of trace class operators with $X_k\in\alg{A}_{i_k}$, then
  \begin{align*}
    \Tr(X_1X_2\dotsm X_n)
    &=    \Tr((P_0+P_{i_1})X_1(P_0+P_{i_1})(P_0+P_{i_2})X_2(P_0+P_{i_2})\dotsm (P_0+P_{i_n})X_n(P_0+P_{i_n})) \\
    &=    \Tr(P_0X_1P_0X_2P_0\dotsm P_0X_nP_0) \\
    &= \varphi(X_1)\,\varphi(X_2)\dotsm\varphi(X_n). 
  \end{align*}
\end{example}
Next we show that  any Boolean independent family
can be represented on a star product space.

\subsection{Construction of a cyclic-Boolean trace}\label{sec:construction}

Let $(\alg{A},\varphi)$ be a noncommutative probability space,
  where $\alg{A}$ is a $*$-algebra 
  and $\alg{A}_i$ are Boolean independent subalgebras.
  In the following assume that $\alg{A}$ is faithfully represented on a Hilbert space $\hilbH$
  and that the state $\varphi$ is realized as a vector state
  $\varphi(X) = \langle X\xi,\xi \rangle$. One way to achieve this under
  certain conditions is the  \emph{GNS-construction}.

  Recall that the GNS-representation consists of the Hilbert
  space $\hilbH_\varphi$ obtained by completing the quotient space
  $\alg{A}/N_\varphi$, where $N_\varphi=\{x\in\alg{A} \mid \varphi(x^*x)=0\}$,
  with respect to the scalar product
  $$
  \langle [x]_\varphi, [y]_\varphi\rangle = \varphi(y^*x)
  .
  $$
  The action of the GNS representation is $\pi_\varphi(x) [y]_\varphi = [xy]_\varphi$.

\begin{lemma}
    The GNS representation is faithful if and only if the state $\varphi$ is
    nondegenerate in the sense that
    if $\varphi(axb)=0$ for all $a,b\in\alg{A}$, then $x=0$.
  \end{lemma}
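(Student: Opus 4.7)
The plan is to unfold the GNS construction and characterize the kernel of $\pi_\varphi$ purely in terms of the state $\varphi$, then compare the resulting condition with nondegeneracy. Concretely, $\pi_\varphi(x)$ is determined by its action on the dense subspace $\alg{A}/N_\varphi \subseteq \hilbH_\varphi$, so $\pi_\varphi(x)=0$ is equivalent to $[xy]_\varphi = 0$ for all $y\in\alg{A}$, i.e.\ $\varphi(y^\ast x^\ast x y)=0$ for every $y\in\alg{A}$. The lemma thus reduces to the equivalence
\[
\bigl(\,\varphi(y^\ast x^\ast x y)=0 \text{ for all } y\in\alg{A}\ \Longrightarrow\ x=0\,\bigr)
\iff
\bigl(\,\varphi(axb)=0 \text{ for all } a,b\in\alg{A}\ \Longrightarrow\ x=0\,\bigr).
\]

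For the direction ``nondegenerate $\Rightarrow$ faithful'', the key tool is the Cauchy--Schwarz inequality for positive linear functionals,
\[
|\varphi(axb)|^{2} \le \varphi(aa^\ast)\,\varphi(b^\ast x^\ast x b).
\]
If $\pi_\varphi(x)=0$, the right-hand side vanishes for all $a,b\in\alg{A}$, hence $\varphi(axb)=0$ for all $a,b$, and nondegeneracy forces $x=0$.

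For the converse ``faithful $\Rightarrow$ nondegenerate'', suppose $\varphi(axb)=0$ for all $a,b\in\alg{A}$. Specializing $a=b^\ast x^\ast$ yields $\varphi(b^\ast x^\ast x b)=0$ for every $b\in\alg{A}$, which by the characterization of the kernel gives $\pi_\varphi(x)=0$, whence $x=0$ by faithfulness.

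The only subtlety worth flagging is that $\alg{A}$ is not assumed unital, so one cannot simply substitute $a=b=1$ to read off $\varphi(x)$ from the nondegeneracy hypothesis; both implications above are written so as to avoid this, the Cauchy--Schwarz step handling one direction and the choice $a=b^\ast x^\ast$ handling the other. Otherwise the argument is routine.
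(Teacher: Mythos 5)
Your proof is correct, and it follows essentially the same route as the paper: both unfold the GNS construction and identify $\ker\pi_\varphi$ with the set of elements $x$ such that $\varphi(axb)=0$ for all $a,b$. The only real difference is the bridge between the kernel condition and the bilinear condition. The paper tests $[xy]_\varphi$ against the dense set of vectors $[z]_\varphi$, so that $\pi_\varphi(x)=0$ is \emph{immediately} equivalent to $\varphi(z^*xy)=0$ for all $y,z$, with no inequality needed; you instead use the norm characterization $\varphi(y^*x^*xy)=0$ for all $y$ and then invoke Cauchy--Schwarz for one implication and the substitution $a=b^*x^*$ for the other. Both versions are valid without a unit: Cauchy--Schwarz holds for the positive sesquilinear form $(u,v)\mapsto\varphi(v^*u)$, and your explicit remark about avoiding $a=b=1$ is well taken. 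The paper's chain of equivalences is just marginally shorter, since it obtains the two-sided condition $\varphi(z^*xy)=0$ in one stroke rather than as two separate implications.
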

  \begin{proof}
    Let $x\in\alg{A}$, then
    $\pi_\varphi(x) =0$ $\iff$ $\pi_\varphi(x)[y]_\varphi=0$ for all $y\in \alg{A}$
    $\iff$ $\langle [xy]_\varphi,[z]_\varphi\rangle_\varphi=0$ for all $y,z\in \alg{A}$
    $\iff$ $\varphi(z^*xy)=0$ for all $y,z\in \alg{A}$.
  \end{proof}
  
  If $\alg{A}$ is unital, then the state vector $\xi=[1]_\varphi$ comes for free,
  otherwise the state must satisfy the Cauchy-Schwarz condition
  $$
  \abs{\varphi(x)}^2\leq C\varphi(x^*x)
  $$
  for some fixed constant $C$ in order to allow a positive extension 
  to the unitization of $\alg{A}$,
  see \cite[Theorem~4.5.11]{Rickart:1960:general}. 

  Assuming that $\alg{A}$ and the state $\varphi$ are faithfully
  represented on some Hilbert space $\hilbH$
  we identify $\alg{A}$ with a subalgebra of $B(\hilbH)$ and
  we are now going to reconstruct the star product space from this data.
Let $\hilbH_0=[\xi]=\C\xi$ be the subspace spanned
  by $\xi$ and $P_0$ the orthogonal projection onto it.
  Adjoining this projection to the algebra $\alg{A}$
  and to each subalgebra $\alg{A}_i$, Boolean independence is preserved
  and wlog we may assume that $P_0\in\alg{A}_i$ for every $i$.
  Let now $\bub{\alg{A}}_i=\ker\varphi\cap\alg{A}_i$, then
  we can construct the components of the star product space as follows.
\begin{lemma}
    Let $\hilbH_0=[\xi]$ and  $\bub{\hilbH}_i=[\bub{\alg{A}}_i\xi]$ 
    the closed invariant subspace generated by $\xi$.
    Then
    \begin{enumerate}[(i)]
     \item $\hilbH_0\perp\hilbH_i$ for all $i$.
     \item $\hilbH_i\perp\hilbH_j$ for all $i\ne j$.
    \end{enumerate}
  \end{lemma}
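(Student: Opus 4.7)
The plan is to reduce both orthogonality assertions to inner product computations on the explicit generating vectors $a\xi$ with $a\in\bub{\alg{A}}_i$, and then invoke (a) the very definition $\bub{\alg{A}}_i=\ker\varphi\cap\alg{A}_i$ and (b) the Boolean independence of the family $\{\alg{A}_i\}$ with respect to $\varphi$. Since both $\hilbH_0$ and $\bub{\hilbH}_i$ are closed linear spans of such explicit vectors, by sesquilinearity and continuity it suffices to check orthogonality on these generators.

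For part (i), take $a\in\bub{\alg{A}}_i$ and compute
\[
\langle a\xi,\xi\rangle = \varphi(a)=0,
\]
because by construction $\bub{\alg{A}}_i\subseteq\ker\varphi$. Taking closed linear spans yields $\bub{\hilbH}_i\perp\hilbH_0$.

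For part (ii), let $a\in\bub{\alg{A}}_i$ and $b\in\bub{\alg{A}}_j$ with $i\ne j$. Since $\alg{A}_j$ is a $\ast$-subalgebra and $\varphi(b^*)=\overline{\varphi(b)}=0$, we have $b^*\in\bub{\alg{A}}_j$ as well. Therefore
\[
\langle a\xi,b\xi\rangle = \varphi(b^*a),
\]
and now the tuple $(j,i)$ is alternating, so Boolean independence of $\{\alg{A}_k\}$ with respect to $\varphi$ gives the factorization
\[
\varphi(b^*a)=\varphi(b^*)\varphi(a)=0\cdot 0=0.
\]
Extending by continuity to the closed spans proves $\bub{\hilbH}_i\perp\bub{\hilbH}_j$.

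The only subtlety to flag is that $\bub{\alg{A}}_i$ is automatically a $\ast$-subalgebra (it is the intersection of the $\ast$-subalgebra $\alg{A}_i$ with the two-sided $\ast$-ideal $\ker\varphi\cap\alg{A}_i$, which is indeed a subalgebra because $\varphi$ is a state and $\alg{A}_i$ is closed under multiplication), so products $a_1\cdots a_n$ of elements of $\bub{\alg{A}}_i$ still lie in $\bub{\alg{A}}_i$; no separate argument about iterated actions of $\bub{\alg{A}}_i$ on $\xi$ is needed. There is no real obstacle here: the lemma is a direct translation of the scalar identities defining Boolean independence into Hilbert-space orthogonality.
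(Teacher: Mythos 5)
Your proof is correct and follows essentially the same route as the paper: verify orthogonality on the dense generating sets $\bub{\alg{A}}_i\xi$, using $\langle a\xi,\xi\rangle=\varphi(a)=0$ for (i) and the Boolean factorization $\langle a\xi,b\xi\rangle=\varphi(b^*a)=\varphi(b^*)\varphi(a)=0$ for (ii). The only inaccuracy is your closing aside: $\bub{\alg{A}}_i=\ker\varphi\cap\alg{A}_i$ is in general \emph{not} a subalgebra (for instance $\varphi(a^*a)$ need not vanish when $\varphi(a)=0$, since $\ker\varphi$ is not an ideal), but this claim plays no role here because the lemma only involves vectors $a\xi$ with a single factor from $\bub{\alg{A}}_i$.
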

\begin{proof}
    It suffices to verify orthogonality on the dense subspaces $\bub{\alg{A}}_i\xi$.
    \begin{enumerate}[(i)]
     \item 
      Let $X\in \bub{\alg{A}}_i$, then
      $$
      \langle X\xi, \xi\rangle = \varphi(X)=0. 
      $$
     \item  
      Let $X\in \bub{\alg{A}}_i$ and  $Y\in \bub{\alg{A}}_j$ with $i\ne j$,
      then
      $$
      \langle X\xi,Y\xi\rangle = \varphi(Y^*X)=\varphi(Y^*)\,\varphi(X)=0. 
      $$
    \end{enumerate}
  \end{proof}
We now construct the decomposition.
  Under the assumption that $P_0\in\alg{A}_i$ we have
  $\hilbH_i:=\hilbH_0\oplus\bub{\hilbH}_i=[\alg{A}_i\xi]$.
  Denote by $P_i$ the projection onto $\bub{\hilbH}_i$,
  by $\hat{\alg{A}}$ the subalgebra of $\alg{A}$ generated
  by $(\alg{A}_i)_{i\in I}$
  and let $\hat{\hilbH}=[\hat{\alg{A}}\xi]\subseteq \hilbH$
  the closed invariant subspace generated by $\xi$.
  Let further $\hat{P}$ and  $\hat{P}^\perp$ be the respective projections
  onto the space $\hat{\hilbH}$ and its orthogonal complement $\hat{\hilbH}^\perp$.
\begin{proposition}
    \begin{enumerate}[(i)]
     \item []
     \item  For each $i$ the space $\hilbH_i$
      is invariant under $\alg{A}_i$, i.e., for $X\in\alg{A}_i$
      \begin{equation}
        X(P_0+P_i)  = (P_0+P_i) X (P_0+P_i). 
      \end{equation}
     \item
      \label{it:Hj=kerAi}
      For $i\ne j$ the subspace $\bub{\hilbH}_j$ is annihilated
      by $\alg{A}_i$,
      i.e., for $X\in\alg{A}_i$ 
    \begin{equation}
      XP_j=P_jX=0.   
    \end{equation}
   \item The space $\hat{\hilbH}$ is the closed linear span of the subspaces $\alg{A}_i\xi$,
    i.e.,
    \begin{equation}
      \label{eq:fockspacedecomp}
    \hat{\hilbH} = \hilbH_0\oplus\bigoplus_i\bub{\hilbH}_i. 
    \end{equation}
  \end{enumerate}
  \end{proposition}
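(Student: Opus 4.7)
My plan is to establish the three items in sequence, with part (ii) doing the heavy lifting while (i) and (iii) become essentially formal consequences. Throughout, I would exploit the realization of $\varphi$ as a vector state together with the standing assumption $P_0\in\alg{A}_i$ to convert operator-algebraic claims into moment computations to which Boolean independence can be applied.

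For (i), I would simply observe that $\hilbH_i=[\alg{A}_i\xi]$ is by construction invariant under left multiplication by $\alg{A}_i$: for $X,Y\in\alg{A}_i$ one has $X(Y\xi)=(XY)\xi\in\alg{A}_i\xi$, which by density gives $X\hilbH_i\subseteq\hilbH_i$---equivalent to the claimed relation $X(P_0+P_i)=(P_0+P_i)X(P_0+P_i)$. For (ii), I would first reduce the claim $XP_j=0$ to showing $XY\xi=0$ whenever $X\in\alg{A}_i$, $Y\in\bub{\alg{A}}_j$, $i\ne j$, since $\bub{\alg{A}}_j\xi$ is dense in $\bub{\hilbH}_j$; the companion identity $P_jX=0$ then follows by passing to adjoints, using $X^*\in\alg{A}_i$. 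The real work is the computation $\|XY\xi\|^2=\varphi(Y^*X^*XY)$. I would decompose $X^*X\in\alg{A}_i$ as $\varphi(X^*X)P_0+Z$ with $Z\in\bub{\alg{A}}_i$, and check that both resulting summands vanish: the first via $P_0Y\xi=\langle Y\xi,\xi\rangle\xi=\varphi(Y)\xi=0$, and the second via Boolean factorization applied to the alternating product $Y^*ZY$ with indices $j,i,j$.

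For (iii), I would induct on the length of a product $X_1\cdots X_n\xi$ with $X_k\in\alg{A}_{i_k}$, after merging consecutive factors of equal index so that the $i_k$ alternate. Writing each $X_k=\varphi(X_k)P_0+\bub{X}_k$ and repeatedly invoking the key identity from (ii)---that $X_{k-1}\bub{X}_k\xi=0$ whenever $i_{k-1}\ne i_k$---the product telescopes from the right down to $\varphi(X_2)\cdots\varphi(X_n)\,X_1\xi$, which lies in $\hilbH_{i_1}=\hilbH_0\oplus\bub{\hilbH}_{i_1}$. The reverse inclusion $\hilbH_0\oplus\bigoplus_i\bub{\hilbH}_i\subseteq\hat\hilbH$ is immediate, since $\xi=P_0\xi\in\hat{\alg{A}}\xi$ and $\bub{\alg{A}}_i\xi\subseteq\hat{\alg{A}}\xi$ for every $i$.

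The main obstacle throughout is that $P_0$ belongs to every $\alg{A}_i$, so Boolean independence cannot be applied directly to any mixed product containing it. The resolution running through all three parts is the decomposition $\alg{A}_i=\C P_0+\bub{\alg{A}}_i$ (valid precisely because $P_0\in\alg{A}_i$ and $\varphi(P_0)=1$), which lets $P_0$ be absorbed into the scalar component and handled by the vector-state identity, while Boolean factorization is reserved for genuinely centered elements of $\bub{\alg{A}}_i$.
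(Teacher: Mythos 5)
Your proposal is correct and follows essentially the same route as the paper's: (i) is immediate from invariance of $[\alg{A}_i\xi]$, (ii) rests on the norm computation $\norm{XY\xi}^2=\varphi(Y^*X^*XY)$ annihilated by Boolean factorization together with $\varphi(Y)=0$, plus an adjoint to get $P_jX=0$, and (iii) is an induction on alternating words using (ii) to kill the centered component. The only cosmetic difference is that you split $X^*X=\varphi(X^*X)P_0+Z$ before factorizing, whereas the paper applies $\varphi(Y^*X^*XY)=\varphi(Y^*)\,\varphi(X^*X)\,\varphi(Y)$ directly (Boolean independence of the algebras with $P_0$ adjoined being the standing assumption), so that extra centering step is unnecessary though harmless.
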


\begin{proof}
    \begin{enumerate}[(i)]
     \item []
     \item This is an immediate consequence of the definition.
     \item It suffices to show that $XP_j=0$, i.e., $X$ vanishes on
      $\bub{\hilbH}_j$. We verify this on the dense subspace $\bub{A}_j\xi$.
      Indeed, let $Y\in\bub{\alg{A}}_j$, then
      $$
      \norm{XY\xi}^2 
      =   \langle XY\xi,XY\xi\rangle
      = \varphi(Y^*X^*XY) = \varphi(Y^*)\,\varphi(X^*X)\,\varphi(Y) = 0;
      $$
      finally $P_jX=(X^*P_j)^*=0$ because $\alg{A}_i$ is a $*$-algebra.
     \item 
      The space $\hat{\hilbH}$ is the closure of the span of the alternating words
      $X_1X_2\dots X_n\xi$ with $X_j\in\alg{A}_{i_j}$ and $i_j\ne j_{j+1}$.
      We claim that such a word satisfies
      $X_1X_2\dotsm X_n\xi\in\hilbH_{i_1}$.
      We proceed by induction.
      The claim is obviously true for $n=1$.
      Let now $\eta = X_1X_2\dotsm X_n\xi$ be a given word.
      Then $\eta= X_1\eta'$ with $\eta'=X_2\dotsm X_n\xi$
      and by induction hypothesis $\eta'\in H_{i_2}$,
      say $\eta'=\alpha\xi + \eta''$ with
      $\eta''\in \bub{\hilbH}_{i_2}$.
      But then from item
      \eqref{it:Hj=kerAi} we infer that
      $X_1\eta' = \alpha X_1\xi + 0\in\hilbH_{i_1}$.
    \end{enumerate}
  \end{proof}

\begin{corollary}
Every $X\in\hat{\alg{A}}$ has block decomposition
    \begin{equation}
      \label{eq:X=PXP+PpXPp}
      X = \hat{P}X\hat{P} + \hat{P}^\perp X\hat{P}^\perp
    \end{equation}
    and more precisely every $X\in \alg{A}_i$ has the block decomposition
    \begin{equation}
      X = (P_0+P_i)X(P_0+P_i) + \hat{P}^\perp X\hat{P}^\perp. 
    \end{equation}
\end{corollary}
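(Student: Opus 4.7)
The plan is to reduce both identities to the single observation that $\hat{P}$ commutes with every element of $\hat{\alg{A}}$, after which the refinement for $X\in\alg{A}_i$ will follow from the more precise structural information in items (i) and (ii) of the preceding proposition.

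First I would argue that $\hat{\hilbH}$ is invariant under $\hat{\alg{A}}$: for (bounded) $X\in\hat{\alg{A}}$ and $Y\xi\in\hat{\alg{A}}\xi$ one has $X(Y\xi)=(XY)\xi\in\hat{\alg{A}}\xi\subseteq\hat{\hilbH}$, and the claim extends by continuity to all of $\hat{\hilbH}$. Since $\hat{\alg{A}}$ is a $*$-algebra (being generated by the $*$-subalgebras $\alg{A}_i$), the adjoint $X^*$ also lies in $\hat{\alg{A}}$, so the same argument yields $X^*\hat{\hilbH}\subseteq\hat{\hilbH}$. Equivalently, $X\hat{P}=\hat{P}X\hat{P}$, and taking the adjoint of the analogous identity for $X^*$ gives $\hat{P}X=\hat{P}X\hat{P}$ as well. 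Hence $\hat{P}$ commutes with $X$, and the cross terms $\hat{P}X\hat{P}^\perp$ and $\hat{P}^\perp X\hat{P}$ vanish, which is exactly \eqref{eq:X=PXP+PpXPp}.

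For the refinement in the case $X\in\alg{A}_i$, I would further evaluate $\hat{P}X\hat{P}$ by combining items (i) and (ii) of the proposition with the orthogonal decomposition \eqref{eq:fockspacedecomp}, which gives $\hat{P}=P_0+\sum_{j}P_j$. Item (ii) yields $XP_j=0$ for every $j\neq i$, so $X\hat{P}=X(P_0+P_i)$; item (i) then gives $X(P_0+P_i)=(P_0+P_i)X(P_0+P_i)$. Since $\hat{P}(P_0+P_i)=P_0+P_i$, multiplying on the left by $\hat{P}$ preserves this equality, so $\hat{P}X\hat{P}=(P_0+P_i)X(P_0+P_i)$. Substituting into \eqref{eq:X=PXP+PpXPp} yields the second decomposition.

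There is no real obstacle here; the statement is essentially a bookkeeping consequence of the preceding proposition. The only point that requires attention is invoking the $*$-algebra structure to ensure $X^*\hat{\hilbH}\subseteq\hat{\hilbH}$, since this is what upgrades mere invariance of $\hat{\hilbH}$ into genuine commutation of $\hat{P}$ with $X$ and thereby kills the off-diagonal blocks.
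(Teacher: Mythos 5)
Your argument is correct and is essentially the intended one: the paper states this corollary without proof as an immediate consequence of the preceding proposition, and your filling-in — commutation of $\hat{P}$ with $\hat{\alg{A}}$ via invariance of $[\hat{\alg{A}}\xi]$ under the $*$-algebra $\hat{\alg{A}}$, plus items (i), (ii) and the decomposition \eqref{eq:fockspacedecomp} to identify $\hat{P}X\hat{P}=(P_0+P_i)X(P_0+P_i)$ for $X\in\alg{A}_i$ — is exactly the bookkeeping the authors have in mind, and the commutant observation is precisely what they invoke later in the proof of Theorem \ref{omega}.
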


  \begin{theorem}\label{omega}
    The functional $\omega(X)= \Tr(\hat{P}X\hat{P})$
    is a semifinite trace on the algebra $\hat{\alg{A}}$
and
    the subalgebras $\alg{A}_i \cap T(\hilbH)$ are cyclic-Boolean independent with respect
    to $\omega$.
  \end{theorem}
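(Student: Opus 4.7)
The plan hinges on the preceding corollary: its block decomposition $X = \hat{P}X\hat{P} + \hat{P}^\perp X \hat{P}^\perp$ for every $X \in \hat{\alg{A}}$ instantly gives $\hat{P}X = X\hat{P} = \hat{P}X\hat{P}$. In other words, $\hat{P}$ lies in the commutant of $\hat{\alg{A}}$. This single observation is the structural engine of the entire proof.

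For the trace property, I would argue as follows. Given $X, Y \in \hat{\alg{A}}$ with $XY$ trace class, the commutation above together with cyclicity of $\Tr$ gives
\[
\omega(XY) = \Tr(\hat{P} XY \hat{P}) = \Tr(XY\hat{P}) = \Tr(Y \hat{P} X) = \Tr(\hat{P} YX) = \omega(YX).
\]
Semifiniteness then reduces to the observation that $\hat{\alg{A}} \cap T(\hilbH)$ is a two-sided ideal of $\hat{\alg{A}}$ on which $\omega$ is finite.

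For cyclic-Boolean independence, condition (i) -- Boolean independence with respect to $\varphi$ -- is inherited from the setup, so only (ii) requires work. For a cyclically alternating tuple $(k_1, \dots, k_n)$ with $n \geq 2$ and $a_j \in \alg{A}_{k_j} \cap T(\hilbH)$, the refined form $\hat{P} a_j \hat{P} = (P_0 + P_{k_j}) a_j (P_0 + P_{k_j})$ (from the corollary, since $P_0 + P_{k_j} \leq \hat{P}$ and $\hat{P}\hat{P}^\perp = 0$) combined with the commutation yields
\[
\omega(a_1 \cdots a_n) = \Tr\Bigl( \prod_{j=1}^n (P_0 + P_{k_j})\, a_j\, (P_0 + P_{k_j}) \Bigr).
\]
The algebraic identity $(P_0 + P_{k_i})(P_0 + P_{k_{i+1}}) = P_0$, valid whenever $k_i \neq k_{i+1}$ (since $P_0 P_k = P_k P_0 = 0$ and $P_k P_{k'} = 0$ for $k \neq k'$), telescopes the product -- using the cyclic bridge $k_n \neq k_1$ to kill the outer factors -- to $\Tr(P_0 a_1 P_0 a_2 \cdots P_0 a_n P_0)$. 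Writing $P_0 = \xi \xi^*$ and $\xi^* a_j \xi = \varphi(a_j)$ then collapses the trace to $\varphi(a_1)\,\varphi(a_2) \cdots \varphi(a_n)$. The case $n = 1$ is vacuous.

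There is no serious obstacle: the argument mirrors the computation in Example \ref{ex:star_product} almost verbatim, with the subspace $\hat{\hilbH}$ and its orthogonal decomposition \eqref{eq:fockspacedecomp} now produced by the GNS-style construction preceding the theorem rather than being declared a priori. The only real technical care is in verifying trace-class conditions so that cyclicity of $\Tr$ is applicable at each step.
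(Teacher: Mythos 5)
Your proposal is correct and follows essentially the same route as the paper: traciality of $\omega$ from $\hat{P}$ lying in the commutant of $\hat{\alg{A}}$ (via the block decomposition of the preceding corollary), and then for cyclically alternating trace-class products the reduction $\hat{P}a_j\hat{P}=(P_0+P_{k_j})a_j(P_0+P_{k_j})$ followed by the telescoping $(P_0+P_{k_i})(P_0+P_{k_{i+1}})=P_0$ down to $\Tr(P_0a_1P_0\cdots P_0a_nP_0)=\varphi(a_1)\cdots\varphi(a_n)$. The paper's argument is exactly this computation, stated with the same level of care about the trace-class issues you flag.
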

  \begin{proof}
    $\omega$ is a trace on $\hat{\alg{A}}$
    because $\hat{P}$ is in the commutant of $\hat{\alg{A}}$.
    Now
    let $X_1X_2\dotsm X_n$ 
    be a cyclically alternating product with $X_j\in\alg{A}_{i_j}$ for $j=1,2,\dots,n$,
    then we have
    \begin{align*}
    \omega(X_1X_2\dotsm X_n)
      &=
        \begin{multlined}[t]
        \Tr \bigl(\hat{P}
          ( (P_0+P_{i_1})X_1(P_0+P_{i_1}) + \hat{P}^\perp X_1\hat{P}^\perp)
      ( (P_0+P_{i_2})X_2(P_0+P_{i_2}) + \hat{P}^\perp X_2\hat{P}^\perp)
      \\
      \dotsm
          ( (P_0+P_{i_n})X_n(P_0+P_{i_n}) + \hat{P}^\perp X_n\hat{P}^\perp)
          \hat{P}
          \bigr)
        \end{multlined}
    \\
    &=\Tr \bigl(
           (P_0+P_{i_1})X_1(P_0+P_{i_1}) 
          ( (P_0+P_{i_2})X_2(P_0+P_{i_2})
          \dotsm
          ( (P_0+P_{i_n})X_n(P_0+P_{i_n})
      \bigr)
      \\
    &=\Tr \bigl(
           (P_0+P_{i_1})X_1P_0X_2P_0\dotsm P_0X_n(P_0+P_{i_n})
          \bigr)
      \\
&=\Tr (P_0 X_1P_0 X_2P_0\dotsm P_0X_nP_0)
    \end{align*}
    and it follows that $X_1X_2\dotsm X_n\in L^1(\omega)$.
  \end{proof}

  \begin{remark}
    Conversely, assume that subalgebras $\alg{A}'$
    and $\alg{A}''$ are cyclic-Boolean independent in a cnps
    $(\alg{A},\varphi,\omega)$.
    Assume further that $\alg{A}$ is generated by $\alg{A}'$ and
     $\alg{A}''$ 
    and that there is a projection $p\in\alg{A}'$
    such that $pap=\varphi(a)p$ for $a\in\alg{A}'$ and $\omega(p)=\varphi(p)=1$.
    Then $\varphi(x)=\omega(px)$ for all $x\in\alg{A}$.
  \end{remark}

\section{Convolution and central limit theorem}\label{sec4}

\subsection{Cyclic-Boolean convolution}
Let $(\mathcal A, \varphi, \omega)$ be a cncps.
For  $a \in\alg{A}$ the renormalized (tracial)
Cauchy transform is the formal Laurent series 
\begin{equation*}
\RC_a(z) =  \sum_{n=1}^\infty
\frac{\omega(a^n)}{z^{n+1}}. 
\end{equation*}
By slight
 abuse of terminology, we call $G_a$ the Green function (evaluated
at the state $\varphi$) of $a$. It has formal Laurent expansion
\begin{equation*}
  G_a(z)
  = \frac{1}{z} + \sum_{n=1}^\infty \frac{\varphi(a^n)}{z^{n+1}}, 
\end{equation*}
and we denote the reciprocal Green function by $F_a(z)=1/G_a(z)$.

If $a$ is a trace class operator on a Hilbert space and $\omega$ is the trace
then $|\Tr(a^n)| \leq \norm{|a|^{n-1}}\, \Tr(|a|) \leq  \|a\|^{n-1} \Tr(|a|)$,
and hence $\RC_a(z)$ is absolutely convergent in $\{z\in \C:
|z|>\|a\|\}$. Moreover, if $a$ is selfadjoint then $\RC_a$ has analytic
extension to $\C \setminus \spec(a)$ by Lidskii's theorem
\begin{equation}
\RC_a(z) = \Tr((z-a)^{-1}-z^{-1}) = \sum_{i=1}^\infty \frac{\lambda_i}{z(z -\lambda_i)},  
\end{equation}
where $\{\lambda_i\}_{i\geq1}$ is the multiset of eigenvalues of $a$. In particular, the non-zero eigenvalues of $a$ can be detected from $\RC_a$ as poles.  If the Hilbert space is finite-dimensional, then we also have the formula 
\begin{equation}
\RC_a(z) =  \sum_{i=1}^{\dim (H)} \frac{1}{z -\lambda_i}- \frac{\dim (H)}{z},   
\end{equation}
and hence 
$$
\lim_{z \to0} z \RC_a(z) = \text{the multiplicity of the eigenvalue  zero} - \dim(H). 
$$

\begin{remark}
By \cite[Corollary 2.2]{CHS18}, the tracial moments $\Tr(a^n)$ for all but finitely many natural numbers $n$ determine the eigenvalues of $a$. 
So, for any $p \in \N$, we can generalize the above setting to the Schatten class
$S_p$ by using the truncated generating function 
$$
\RC_p(z) = \sum_{n=p}^\infty \frac{\omega(a^n)}{z^{n+1}}. 
$$
\end{remark}

Let $a$ and $b$ be cyclic-Boolean independent in $(\mathcal A, \varphi,\omega)$. 
It is known \cite{SW97} (and will be shown in Remark~\ref{rem:booleconv} below)
that the Green function of $a+b$ can be computed via the formula
\begin{equation}\label{eq:boole_conv}
\frac{1}{G_a(z)} +\frac{1}{G_b(z)}-z = \frac{1}{G_{a+b}(z)};
\end{equation}
i.e.,
$$
B_{a+b}(z) = B_a(z)+B_b(z), 
$$
where
\begin{equation}\label{eq:Boolean_cumulant}
B_a(z) = \frac{z}{G_a(1/z)}-1
\end{equation}
is the \emph{Boolean cumulant transform}.
The next theorem generalizes this identity to an analogous formula
for the generating function $\RC_{a+b}(z)$
which gives information on the eigenvalues of $a+b$. 

\begin{theorem}\label{thm:convolution} Let $a$ and $b$ be cyclic-Boolean
  independent elements. Then the renormalized Cauchy transform of their sum is
$$
\RC_{a+b}(z) =  \RC_a(z) + \RC_b (z) + \frac{G_a'(z)}{G_a(z)} + \frac{G_b'(z)}{G_b(z)} - \frac{G_{a+b}'(z)}{G_{a+b}(z)} + \frac{1}{z};
$$
i.e., if we define {\rm(cf.~\eqref{eq:gAt=gA+DlogzG})} $$
\HH_a(z) = \RC_a(z) +\frac{d}{dz}\log zG_a(z)
$$
then
\begin{equation}\label{eq:linearization}
\HH_{a+b}(z) = \HH_a(z) + \HH_b(z). 
\end{equation}
\end{theorem}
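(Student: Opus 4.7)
The plan is to compute the generating function
\[
\tilde\omega_{a+b}(w):=\sum_{n\ge 1}\omega((a+b)^n)w^n
\]
by expanding $(a+b)^n$ over the $2^n$ words in $\{a,b\}^n$, evaluating each term using cyclic-Boolean independence, and then matching the resulting power series against the claimed identity via the Boolean convolution formula \eqref{eq:boole_conv}.

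I write each word in its maximal linear block form $w=x_1^{p_1}x_2^{p_2}\cdots x_k^{p_k}$ with $x_i\in\{a,b\}$, $x_i\ne x_{i+1}$, and $p_i\ge 1$. Three cases then govern $\omega(w)$. When $k=1$ the word is $a^n$ or $b^n$, contributing $\omega(a^n)+\omega(b^n)$. When $k\ge 2$ is even we have $x_1\ne x_k$, so the word is cyclically alternating and condition (ii) of the definition of cyclic-Boolean independence gives $\omega(w)=\prod_{i=1}^{k}\varphi(x_i^{p_i})$. When $k\ge 3$ is odd we have $x_1=x_k$; the trace property of $\omega$ cyclically merges the first and last blocks into $x_1^{p_1+p_k}$, reducing $w$ to a cyclically alternating product of $k-1$ blocks, whereupon condition (ii) yields $\omega(w)=\varphi(x_1^{p_1+p_k})\prod_{i=2}^{k-1}\varphi(x_i^{p_i})$.

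Setting $P(w):=\sum_{n\ge 1}\varphi(a^n)w^n$ and $Q(w):=\sum_{n\ge 1}\varphi(b^n)w^n$, the even case assembles via a geometric series in $PQ$ (with either an $a$- or a $b$-start) to $2PQ/(1-PQ)$, while the odd case, using $\sum_{s\ge 2}(s-1)\varphi(a^s)w^s=wP'(w)-P(w)$ for the merged block, assembles to $[(wP'-P)Q+(wQ'-Q)P]/(1-PQ)$. Summing these and cancelling the $-PQ$ terms collapses the result to
\[
\tilde\omega_{a+b}(w)=\tilde\omega_a(w)+\tilde\omega_b(w)+\frac{w(PQ)'}{1-PQ}=\tilde\omega_a(w)+\tilde\omega_b(w)-w\frac{d}{dw}\log(1-P(w)Q(w)).
\]

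To close the argument I invoke the Boolean linearization \eqref{eq:boole_conv}, which in these variables rearranges to $(1+P)(1+Q)=(1-PQ)(1+P_{a+b})$, together with the identity $1+P(w)=G_a(1/w)/w$ coming directly from the Laurent expansion of $G_a$ (and its analogues for $b$ and $a+b$). Taking $-w(d/dw)\log$ of the Boolean identity and substituting $w=1/z$ converts the display above into the stated formula: the chain-rule computation $w\tfrac{d}{dw}\log(1+P(w))|_{w=1/z}=-1-zG_a'(z)/G_a(z)$ handles the logarithmic-derivative terms, and the stray $1/z$ is produced by the $\log w$ piece of $\log(zG_a(z))$ after the change of variable. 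The main obstacle will be the case analysis for $\omega(w)$ and the clean merging of even and odd contributions into a single logarithmic-derivative identity; the secondary hazard is bookkeeping under the variable swap $w=1/z$, which is tamed by the chain-rule formula just noted.
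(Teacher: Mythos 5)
Your proposal is correct and follows essentially the same route as the paper's algebraic proof: expand $(a+b)^n$ into words, merge the first and last blocks of equal letters by traciality so that condition (ii) factorizes everything into $\varphi$-moments, use the derivative identity $\sum_{s\ge 2}(s-1)\varphi(a^s)w^s = wP'(w)-P(w)$ for the merged block, and finally invoke the Boolean linearization $F_{a+b}=F_a+F_b-z$ to turn the cross term into the logarithmic derivatives of $G_a$, $G_b$, $G_{a+b}$. The only differences are cosmetic: you work in the moment variable $w=1/z$ and package the cross term as $-w\frac{d}{dw}\log(1-PQ)$, whereas the paper groups words by their final letter and carries out the same cancellation directly in terms of $zG_a(z)-1$, $F_a$, $F_b$.
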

\begin{remark}
While $\HH$ linearizes independent sums and is useful for analyzing convolutions, we will later introduce a modification which deserves to be called the cyclic-Boolean cumulant transform; see Section \ref{sec5}.   
\end{remark}

\begin{proof}[Algebraic proof]
We expand the power $(a+b)^n$ and regroup the resulting monomials into
those ending in $a$ and those ending in $b$: 
\begin{align*}
(a+b)^n
&=
a^n
+\sum_{\mathclap{\substack{k\geq1 \\ p_0\geq0,\, p_1,q_1, \dots, p_k,q_k \geq1  \\ p_0 +  p_1 + q_1 +  \cdots + p_k +q_k =n }}} a^{p_0} b^{q_1} a^{p_1} \cdots b^{q_k} a^{p_k}
+ b^n
+  \sum_{\mathclap{\substack{k\geq1 \\  q_0 \geq0,\, p_1, q_1, \dots, p_k,q_k \geq1  \\  q_0+ q_1 + p_1+ \cdots + p_k + q_k=n }}} b^{q_0}a^{p_1} b^{q_1}\cdots a^{p_k} b^{q_k}, 
\end{align*}
and applying $\omega$ yields 
\begin{align*}
\omega((a+b)^n) 
&=
\omega(a^n) + \omega(b^n)
+\sum_{\mathclap{\substack{k\geq1 \\ p_0\geq0,\, p_1,q_1, \dots, p_k,q_k \geq1   \\ p_0 +  p_1 + q_1 +  \cdots + p_k +q_k =n }}} 
\omega(a^{{p_0}+p_k} b^{q_1} a^{p_1} \cdots b^{q_k})
+\sum_{\mathclap{\substack{k\geq1 \\ q_0 \geq0,\,  p_1, q_1, \dots, p_k,q_k \geq1  \\  q_0+ p_1 + q_1+ \cdots + p_k + q_k=n }}} 
  \omega(b^{q_0+q_k}a^{p_1} b^{q_1}\cdots a^{p_k})
\\
&=
\omega(a^n) + \omega(b^n)
+\sum_{\mathclap{\substack{k\geq1 \\ p_0\geq0,\, p_1,q_1, \dots, p_k,q_k \geq1   \\ p_0 +  p_1 + q_1 +  \cdots + p_k +q_k =n }}} 
\varphi(a^{{p_0}+p_k})\,\varphi(b^{q_1})\,\varphi(a^{p_1}) \cdots\varphi(b^{q_k}) \\
&\quad+\sum_{\mathclap{\substack{k\geq1 \\ q_0 \geq0,\,  p_1, q_1, \dots, p_k,q_k \geq1  \\  q_0+ p_1 + q_1+ \cdots + p_k + q_k=n }}} 
  \varphi(b^{q_0+q_k})\,\varphi(a^{p_1})\,\varphi(b^{q_1})\cdots \varphi(a^{p_k}). 
\end{align*}
Multiplying the above identity by $z^{-n-1}$ and taking the summation over $n$ yields 
\begin{align*}
  \RC_{a+b}(z)
  &= \sum_{n\geq0}\frac{\omega((a+b)^n)}{z^{n+1}}
  \\
  &= \sum_{n\geq1}\frac{\omega(a^n)}{z^{n+1}} +  \sum_{n\geq1}\frac{\omega(b^n)}{z^{n+1}}
+\sum_{\mathclap{\substack{k\geq1 \\ p_0\geq0,\, p_1,q_1, \dots, p_k,q_k \geq1}}}
\frac{\varphi(a^{{p_0}+p_k})}{z^{p_0+p_k+1}}\,\frac{\varphi(b^{q_1})}{z^{q_1}}\,\frac{\varphi(a^{p_1})}{z^{p_1}} \cdots\frac{\varphi(b^{q_k})}{z^{q_k}} \\
&\quad 
+\sum_{\mathclap{\substack{k\geq1 \\ q_0 \geq0,\,  p_1, q_1, \dots, p_k,q_k \geq1 }}}
  \frac{\varphi(b^{q_0+q_k})}{z^{q_0+q_k+1}}\,\frac{\varphi(a^{p_1})}{z^{p_1}}\,\frac{\varphi(b^{q_1})}{z^{q_1}}\cdots \frac{\varphi(a^{p_k})}{z^{p_k}}. 
\end{align*}
Now
\begin{align*}
  \sum_{\substack{p_0\geq0\\p\geq1}} \frac{\varphi(a^{p_0+p_1})}{z^{p_0+p+1}}
  &= \sum_{m\geq 1}   \sum_{\substack{p_0\geq0\\p\geq1\\ p_0+p=m}} \frac{\varphi(a^m)}{z^{m+1}}
 = \sum_{m\geq 1}   m \frac{\varphi(a^m)}{z^{m+1}}
  = -(zG_a(z))'
\end{align*}
and thus

\begin{align*}
  \RC_{a+b}(z)
  &= \RC_a(z) + \RC_b(z)
     - \sum_{k\geq1} (zG_a(z))'(zG_a(z)-1)^{k-1}(zG_b(z)-1)^k \\
    &\quad 
     - \sum_{k\geq1} (zG_b(z))'(zG_a(z)-1)^k(zG_b(z)-1)^{k-1}
     \\
  &= \RC_a(z) + \RC_b(z)
     - \frac{
       (zG_a(z))'(zG_b(z)-1) +(zG_a(z)-1)(zG_b(z))'
     }{ 1 - (zG_a(z)-1)(zG_b(z)-1)}
  \\
&= \RC_a(z) + \RC_b(z)
     -
     \frac{(zG_a(z))'}{G_a(z)} 
     \frac{z-F_b(z)}{F_a(z)F_b(z)-(z-F_a(z))(z-F_b(z))} \\
  &\quad   -
     \frac{(zG_b(z))'}{G_b(z)} 
     \frac{z-F_a(z)}{F_a(z)F_b(z)-(z-F_a(z))(z-F_b(z))}
     \\
&= \RC_a(z) + \RC_b (z) - \left(\frac{1}{z} + \frac{G_a'}{G_a} \right) \frac{z-F_b}{F_a + F_b -z} - \left(\frac{1}{z} + \frac{G_b'}{G_b} \right) \frac{z-F_a}{F_a + F_b -z} \\
&=  \RC_a(z) + \RC_b (z) + \left( \frac{F_a'}{F_a} (z-F_b)+ \frac{F_b'}{F_b}(z-F_a) -1\right) \frac{1}{F_a + F_b-z} + \frac{1}{z}\\
&=  \RC_a(z) + \RC_b (z) + \left( \frac{F_a'}{F_a} (z-F_a-F_b)+ \frac{F_b'}{F_b}(z-F_a-F_b) -1 +F_a' +F_b'\right) \frac{1}{F_a + F_b-z} + \frac{1}{z} \\
&=  \RC_a(z) + \RC_b (z) - \frac{F_a'}{F_a} - \frac{F_b'}{F_b} + \frac{F_{a+b}'}{F_{a+b}} + \frac{1}{z} \\
&=  \RC_a(z) + \RC_b (z) + \frac{G_a'}{G_a} + \frac{G_b'}{G_b} - \frac{G_{a+b}'}{G_{a+b}} + \frac{1}{z}.  
\end{align*}
\end{proof}

\begin{proof}[Analytic proof in the setting of Example \ref{ex:star_product}]

Under the assumption that our $\ast$-algebras are represented as trace class operators 
  on the star product Hilbert space $$
  \hilbH=\C \xi \oplus\bub{\hilbH}_1\oplus\bub{\hilbH}_2
  $$
equipped with a vacuum state $\varphi = \langle . \xi, \xi \rangle$ and the trace $\omega = \Tr$ we can use this decomposition and represent the involved operators
  as block operator matrices
 \begin{equation}\label{eq:block}
  A =
  \begin{bmatrix}
    \alpha & a' & 0\\
    a& \bub{A}&0\\
    0&0&0
  \end{bmatrix}, 
  \qquad
  B =
  \begin{bmatrix}
    \beta &0& b'\\
    0&0&0\\
    b&0& \bub{B}
  \end{bmatrix}, 
  \qquad
  A+B =
  \begin{bmatrix}
    \alpha+\beta & a' & b'\\
    a& \bub{A}&0\\
    b&0&\bub{B}
  \end{bmatrix}. 
\end{equation}
   In other words, $(A+B)\bub{}=\bub{A}\oplus\bub{B}$
  is a direct sum and therefore
  $$
  \RC_{(A+B)\bub{}}(z) = \RC_{\bub{A}}(z) + \RC_{\bub{B}}(z)
  $$
  and we conclude with the identity
  \eqref{eq:gAt=gA+DlogzG}.
\end{proof}

\begin{remark}
\label{rem:booleconv}
  The idea of the above algebraic/analytic proofs can also be used to verify the known formula \eqref{eq:boole_conv}. For example, the Banachiewicz formula \eqref{eq:Banachiewicz} applied to the decomposition \eqref{eq:block} yields the Green function in the form 
\[
\frac1{G_{A+B}(z)} =  z - (\alpha+\beta) - a' (z-\bub{A})^{-1}a - b' (z-\bub{B})^{-1} b.  
\] 
Combining this with the formulas
\[
\frac1{G_A(z)} = z - \alpha - a' (z-\bub{A})^{-1}a \qquad {\rm and} \qquad \frac1{G_B(z)} = z - \beta - b' (z-\bub{B})^{-1}b
\]
we obtain \eqref{eq:boole_conv}.   
\end{remark}

\subsection{Examples from star product graphs}

For a  rooted graph $(\bG,o)$, its $N$-fold star product
$(\bG_N,o_N)=(\bG,o)\starprod (\bG,o) \starprod \cdots \starprod (\bG,o)$ has
the adjacency matrix that is the sum of cyclic-Boolean independent copies of the adjacency matrix of $\bG$; see \eqref{eq:boole_sum}. 
Therefore, Theorem \ref{thm:convolution} and \eqref{eq:FGamma1.Gamma2} imply that  
\begin{equation}\label{eq:renomormalized_Cauchy}
\RC_{\bG_N}(z)=N \RC_a(z)+ N\frac{G_\bG'(z)}{G_\bG(z)}- \frac{G_{\bG_N}'(z)}{G_{\bG_N}(z)} + \frac{N-1}{z}
\end{equation}
where
\begin{equation}\label{eq:Green}
\frac{1}{G_{\bG_N}(z)}=\frac{N}{G_\bG(z)}-(N-1)z.
\end{equation}

\begin{example}[Star graph]\label{exa:star_graph}

The star graph $(\bS_N,o_N)$ on $N+1$ vertices $\{0,1,\dots, N\}$ has edges $\{0,i\}, i=1,2,\dots, N$. It is the $N$-fold star product of the complete graph $(\bK_2,o)$ (Figure \ref{star_graph}). 
The eigenvalues of the adjacency matrix of $\bK_2$ are $\pm1$, and hence
$$
\RC_{\bK_2}(z)=\frac{1}{z-1}+\frac{1}{z+1}-\frac2{z}\,\qquad {\rm and} \qquad G_{\bK_2}(z)=\frac{1}{2}\left(\frac{1}{z-1}+\frac{1}{z-1}\right), 
$$
where the latter formula can be computed via \eqref{eq:phiGamma=GGammaphiGamma-o}. 
Using \eqref{eq:Green} entails
$$ 
G_{\bS_N}(z)=\frac{1}{2}\left(\frac{1}{z - \sqrt{N}} - \frac{1}{z+\sqrt{N}}\right)  \,\qquad {\rm and} \qquad \frac{G_{\bS_N}'(z)}{G_{\bS_N}(z)}=\frac{1}{z}-\frac{1}{z - \sqrt{N}} - \frac{1}{z+\sqrt{N}}.   
$$
The renormalized Cauchy transform of $(\bS_N,o_N)$ may be calculated from \eqref{eq:renomormalized_Cauchy} as follows: 
\begin{align*}
\RC_{\bS_N}(z)&=N\left(\frac{1}{z - 1}+\frac{1}{z+1}-\frac{2}{z}\right)+N\left(\frac{1}{z}-\frac{1}{z - 1} - \frac{1}{z+1} \right) \\
 &\quad+\left(\frac{1}{z - \sqrt{N}} + \frac{1}{z+\sqrt{N}} -\frac{1}{z}\right)+ \frac{N-1}{z}\\
 & =\frac{1}{z - \sqrt{N}} +\frac{1}{z+\sqrt{N}} -\frac{2}{z}. 
\end{align*}
The Cauchy transform is given by 
$$\Cauchy_{\bS_N}(z)=\frac{1}{z - \sqrt{N}} +\frac{1}{z+\sqrt{N}}+\frac{N-1}{z}.$$
This recovers the fact that the multiset of eigenvalues of the adjacency matrix of $\bS_N$ is given by  $\{[\sqrt{N}]^1,[- \sqrt{N}]^1, [0]^{N-1}\}$.
\end{example}

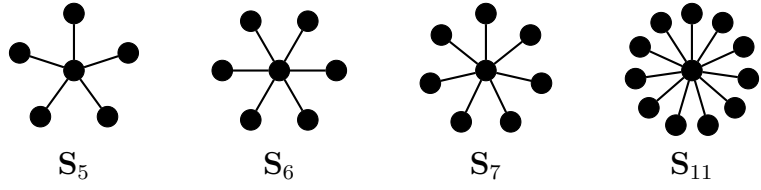
\begin{figure}
\begin{tikzpicture}
[mystyle/.style={scale=0.7, draw,shape=circle,fill=black}]
\def\ngon{5}
\def\ngonn{6}
\node[regular polygon,regular polygon sides=\ngon,minimum size=1.5cm] (p) {};
\foreach\x in {1,...,\ngon}{\node[mystyle] (p\x) at (p.corner \x){};}
\node[mystyle] (p0) at (0,0) {};
\foreach\x in {1,...,\ngon}
{
 \draw[thick] (p0) -- (p\x);
}
  \node [label=below:$\bS_{\ngon}$] (*) at (0,-0.8) {};
 \end{tikzpicture}
  \qquad
\begin{tikzpicture}
[mystyle/.style={scale=0.7, draw,shape=circle,fill=black}]
\def\ngon{6}
\def\ngonn{7}
\node[regular polygon,regular polygon sides=\ngon,minimum size=1.5cm] (p) {};
\foreach\x in {1,...,\ngon}{\node[mystyle] (p\x) at (p.corner \x){};}
\node[mystyle] (p0) at (0,0) {};
\foreach\x in {1,...,\ngon}
{
 \draw[thick] (p0) -- (p\x);
}
  \node [label=below:$\bS_{\ngon}$] (*) at (0,-0.8) {};
 \end{tikzpicture}
  \qquad
\begin{tikzpicture}
[mystyle/.style={scale=0.7, draw,shape=circle,fill=black}]
\def\ngon{7}
\def\ngonn{8}
\node[regular polygon,regular polygon sides=\ngon,minimum size=1.5cm] (p) {};
\foreach\x in {1,...,\ngon}{\node[mystyle] (p\x) at (p.corner \x){};}
\node[mystyle] (p0) at (0,0) {};
\foreach\x in {1,...,\ngon}
{
 \draw[thick] (p0) -- (p\x);
}
  \node [label=below:$\bS_{\ngon}$] (*) at (0,-0.8) {};
 \end{tikzpicture}
  \qquad
  \begin{tikzpicture}
[mystyle/.style={scale=0.7, draw,shape=circle,fill=black}]
\def\ngon{11}
\def\ngonn{12}
\node[regular polygon,regular polygon sides=\ngon,minimum size=1.5cm] (p) {};
\foreach\x in {1,...,\ngon}{\node[mystyle] (p\x) at (p.corner \x){};}
\node[mystyle] (p0) at (0,0) {};
\foreach\x in {1,...,\ngon}
{
 \draw[thick] (p0) -- (p\x);
}
  \node [label=below:$\bS_{\ngon}$] (*) at (0,-0.8) {};
 \end{tikzpicture}
  \caption{Star graphs}\label{star_graph}

\end{figure}

\begin{example}[Friendship graph]\label{exa:friendship_graph}

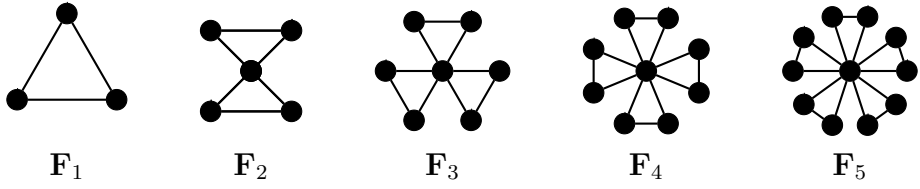
\begin{figure}
\begin{tikzpicture}
[mystyle/.style={scale=0.7, draw,shape=circle,fill=black}]
\def\ngon{3}
\node[regular polygon,regular polygon sides=\ngon,minimum size=1.5cm] (p) {};
\node[regular polygon,regular polygon sides=2*\ngon,minimum size=1.5cm] (q) {};
\foreach\x in {1,...,\ngon}{\node[mystyle] (p\x) at (p.corner \x){};}
\foreach\x in {1,...,\numexpr\ngon-1\relax}{
  \foreach\y in {\x,...,\numexpr\x+1\relax}{
    \draw[thick] (p\x) -- (p\y);
  }
}
 \draw[thick] (p1) -- (p\ngon);
 \node [label=below:$\bF_{1}$] (*) at (0,-0.8) {};
\end{tikzpicture}
\qquad
\begin{tikzpicture}
[mystyle/.style={scale=0.7, draw,shape=circle,fill=black}]
\def\ngon{4}
\node[regular polygon,regular polygon sides=\ngon,minimum size=1.5cm] (p) {};
\node[regular polygon,regular polygon sides=\ngon,minimum size=.75cm] () {};
\foreach\x in {1,...,\ngon}{\node[mystyle] (p\x) at (p.corner \x){};}
\foreach\x in {1,3}{
  \foreach\y in {\x,...,\numexpr\x+1\relax}{
    \draw[thick] (p\x) -- (p\y);
  }
}
\node[mystyle] (p0) at (0,0) {};
\foreach\x in {1,...,\ngon}
{
 \draw[thick] (p0) -- (p\x);
}

\foreach\x in {1,3}{
  \foreach\y in {\x,...,\numexpr\x+1\relax}{
    \draw[thick] (p\x) -- (p\y);
  }
}
\node[mystyle] (p0) at (0,0) {};
\foreach\x in {1,...,\ngon}
{
 \draw[thick] (p0) -- (p\x);
}

\node[mystyle] (q0) at (0,0) {};

 \node [label=below:$\bF_{2}$] (*) at (0,-0.8) {};
 \end{tikzpicture}
 \qquad
\begin{tikzpicture}
[mystyle/.style={scale=0.7, draw,shape=circle,fill=black}]
\def\ngon{6}
\node[regular polygon,regular polygon sides=\ngon,minimum size=1.5cm] (p) {};
\foreach\x in {1,...,\ngon}{\node[mystyle] (p\x) at (p.corner \x){};}
\foreach\x in {1,3,5}{
  \foreach\y in {\x,...,\numexpr\x+1\relax}{
    \draw[thick] (p\x) -- (p\y);
  }
}
\node[mystyle] (p0) at (0,0) {};
\foreach\x in {1,...,\ngon}
{
 \draw[thick] (p0) -- (p\x);
}
 \node [label=below:$\bF_{3}$] (*) at (0,-0.8) {};
  \end{tikzpicture}
 \qquad
 \begin{tikzpicture}
[mystyle/.style={scale=0.7, draw,shape=circle,fill=black}]
\def\ngon{8}
\node[regular polygon,regular polygon sides=\ngon,minimum size=1.5cm] (p) {};
\foreach\x in {1,...,\ngon}{\node[mystyle] (p\x) at (p.corner \x){};}
\foreach\x in {1,3,5,7}{
  \foreach\y in {\x,...,\numexpr\x+1\relax}{
    \draw[thick] (p\x) -- (p\y);
  }
}
\node[mystyle] (p0) at (0,0) {};
\foreach\x in {1,...,\ngon}
{
 \draw[thick] (p0) -- (p\x);
}
 \node [label=below:$\bF_{4}$] (*) at (0,-0.8) {}; \end{tikzpicture}
  \qquad
 \begin{tikzpicture}
[mystyle/.style={scale=0.7, draw,shape=circle,fill=black}]
\def\ngon{10}
\node[regular polygon,regular polygon sides=\ngon,minimum size=1.5cm] (p) {};
\foreach\x in {1,...,\ngon}{\node[mystyle] (p\x) at (p.corner \x){};}
\foreach\x in {1,3,5,7,9}{
  \foreach\y in {\x,...,\numexpr\x+1\relax}{
    \draw[thick] (p\x) -- (p\y);
  }
}
\node[mystyle] (p0) at (0,0) {};
\foreach\x in {1,...,\ngon}
{
 \draw[thick] (p0) -- (p\x);
}
 \node [label=below:$\bF_{5}$] (*) at (0,-0.8) {};
 \end{tikzpicture}
 \caption{Friendship graphs} \label{Friendship}
\end{figure}

The friendship graph $\bF_N$ is the graph  with $2N+1$ vertices $\{0, \dots, 2N \}$ in which $0$ is connected to every other vertex and the only other edges are $\{2i-1,2i\}$ for $1\leq i \leq N$. The friendship graph is the $N$-fold star product of the complete graph $(\bK_3,o)$ with itself; see Figure \ref{Friendship}. 

In this case 
$$\RC_{\bK_3}(z)=\frac{2}{z+1}+\frac{1}{z-2}-\frac3{z}\,\qquad {\rm and} \qquad G_{\bK_3}(z)=\frac1{3}\left(\frac{2}{z+1}+\frac{1}{z-2}\right), $$
from which
$$ \frac{G_{\bK_3}'(z)}{G_{\bK_3}(z)}=\frac{-z^2 + 2 z - 3}{z^3 - 2 z^2 - z + 2}=\frac1{z - 1} - \frac1{z + 1} - \frac1{z - 2}.$$
On the other hand $$\frac{1}{G_{\bF_N}(z)}=\frac{N}{G_{\bK_3}(z)}-(N-1)z=\frac{z(z-1) -2 N }{z - 1}, $$
and then
$$\frac{G_{\bF_N}'(z)}{G_{\bF_N}(z)}=\frac{1}{z-1} + \frac{1 - 2 z}{z^2 - z-2N} =\frac{1}{z-1} -\frac{1}{z-(1+\sqrt{1+8N})/2} -\frac{1}{z-(1-\sqrt{1+8N})/2}. $$
Thus the renormalized Cauchy transform may be calculated as follows.
\begin{align*}
&\RC_{\bF_N}(z) \\ 
&=N\left(\frac{2}{z+1}+\frac{1}{z-2}-\frac{3}{z}\right)+N\left(\frac{1}{z - 1} - \frac{1}{z + 1} + \frac{1}{2 - z} \right) \\ 
&\quad -\left(\frac{1}{z-1} -\frac{1}{z-(1+\sqrt{1+8N})/2} -\frac{1}{z-(1-\sqrt{1+8N})/2}\right)+ \frac{N-1}{z}\\
&= \frac{N - 1}{z - 1} + \frac{N}{z + 1}+\frac{1}{z-(1+\sqrt{1+8N})/2} +\frac{1}{z-(1-\sqrt{1+8N})/2} -\frac{2 N +1}{z}. 
\end{align*}
Then
\begin{align*}
\Cauchy_{\bF_N}(z)&=\RC_{\bF_N}(z)+\frac{2 N +1}{z}\\
&= \frac{N - 1}{z - 1} + \frac{N}{z + 1}+\frac{1}{z-(1+\sqrt{1+8N})/2} +\frac{1}{z-(1-\sqrt{1+8N})/2}. 
\end{align*}
This recovers the fact that the multiset of eigenvalues of the adjacency matrix of $\bF_N$ is given by 
$$
\left\{\left[\frac{1}{2}-\frac{1}{2}\sqrt{1+8N}\right]^1,[-1]^N,[1]^{N-1},\left[\frac{1}{2}+\frac{1}{2}\sqrt{1+8N}\right]^1\right\}. 
$$

\end{example}

\subsection{Cyclic-Boolean central limit theorem}
Since we have an appropriate linearization \eqref{eq:linearization} for
cyclic-Boolean convolution, we are able to determine the central limit law.  
 
\begin{theorem}\label{thm:CBCLT}
 For each $N \in \N$, let $\{a_i^{(N)}\}_{i=1}^N$ be self-adjoint cyclic-Boolean independent random variables in a cncps $(\alg{A}_N,\varphi_N,\omega_N)$. Assume that, for each fixed $k \in \N$,  the moments $\varphi_N((a_i^{(N)})^k)$ and $\omega_N((a_i^{(N)})^k)$ do not depend on $i$ or $N$, and also $\omega_N(a_i^{(N)})=\varphi_N(a_i^{(N)})=0$, $\varphi_N((a_i^{(N)})^2)=1$ for all $i$ and $N$.  Then, for the normalized sum 
\[
s_N= \frac{a_1^{(N)}+a_2^{(N)}+\cdots +a_N^{(N)}}{\sqrt{N}}, 
\]
it holds that 
\[
\lim_{N\to\infty}\varphi_N(s_N^k) = \begin{cases} 1,& k \in 2\N, \\ 0, &k \in 2\N -1, \end{cases}
\quad\text{and}\quad 
\lim_{N\to\infty}\omega_N(s_N^k) = \begin{cases} 2,& k \in 2\N+2, \\ 0, &k \in 2\N +1.  \end{cases}
\]
\end{theorem}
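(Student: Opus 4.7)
The plan is to exploit the two available linearizations for cyclic-Boolean independence---the Boolean formula \eqref{eq:boole_conv} for $F=1/G$ and the linearization \eqref{eq:linearization} of $\HH$ from Theorem~\ref{thm:convolution}---applied to the rescaled variables $b_i := a_i^{(N)}/\sqrt{N}$. Since the $b_i$ are cyclic-Boolean independent with identical distributions, iteration of the two linearizations yields
\[
\frac{1}{G_{s_N}(z)} = \frac{N}{G_{b_1}(z)}-(N-1)z,\qquad \HH_{s_N}(z)=N\,\HH_{b_1}(z).
\]
I will view both sides as formal Laurent series in $1/z$ with coefficients polynomial in $1/\sqrt{N}$, and pass to the limit $N\to\infty$ coefficient by coefficient. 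This is enough, since each moment $\varphi_N(s_N^n)$ or $\omega_N(s_N^n)$ is a fixed polynomial combination of finitely many Laurent coefficients of $G_{s_N}$ or $\RC_{s_N}$.

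For the $\varphi$-part (which is essentially the classical Boolean CLT), set $\alpha_k = \varphi_N((a_1^{(N)})^k)$. The assumptions $\alpha_1 = 0,\ \alpha_2 = 1$ give $1/G_{b_1}(z) = z - 1/(Nz) + O(N^{-3/2})$; multiplying by $N$ and subtracting $(N-1)z$ yields $1/G_{s_N}(z)\to z-1/z$, hence
\[
G_{s_N}(z)\;\longrightarrow\;\frac{z}{z^2-1}=\sum_{k\ge 0}z^{-2k-1}.
\]
Comparing with $G_{s_N}(z) = \sum_{n\ge 0}\varphi_N(s_N^n)/z^{n+1}$ gives convergence in moments to the symmetric Bernoulli law on $\{\pm 1\}$, which is precisely the first assertion.

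For the $\omega$-part, I expand $\HH_{b_1}(z) = \RC_{b_1}(z) + \frac{d}{dz}\log(zG_{b_1}(z))$ up to order $1/N$. Setting $\beta_k = \omega_N((a_1^{(N)})^k)$, the assumptions $\beta_1 = 0$ and $\alpha_1 = 0,\ \alpha_2 = 1$ imply that the coefficient of $z^{-3}$ equals $(\beta_2 - 2)/N$---arising from $\beta_2/(Nz^3)$ in $\RC_{b_1}$ and from $-2/(Nz^3) = \frac{d}{dz}(1/(Nz^2))$ in the logarithmic derivative---while every other coefficient of $\HH_{b_1}$ has order $N^{-3/2}$ or smaller. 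Multiplying by $N$ therefore gives $\HH_{s_N}(z) \to (\beta_2 - 2)/z^3$ coefficient-wise. To recover $\RC_{s_N}$ I use the identity $\RC_{s_N}(z) = \HH_{s_N}(z) - \frac{d}{dz}\log(zG_{s_N}(z))$ together with the already established limit $G_{s_N}(z)\to z/(z^2-1)$. A short computation gives
\[
\frac{d}{dz}\log\!\left(\frac{z^2}{z^2-1}\right) = \frac{2}{z} - \frac{1}{z-1} - \frac{1}{z+1} = -\sum_{k\ge 1}\frac{2}{z^{2k+1}},
\]
and the two pieces combine to
\[
\RC_{s_N}(z)\;\longrightarrow\;\frac{\beta_2}{z^3}+\sum_{k\ge 2}\frac{2}{z^{2k+1}}.
\]
Comparing against $\RC_{s_N}(z) = \sum_{n\ge 1}\omega_N(s_N^n)/z^{n+1}$ yields the second assertion.

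The main technical point is the careful bookkeeping of powers of $1/\sqrt{N}$: one must verify that the potentially $\sqrt{N}$-growing contribution at order $z^{-2}$ vanishes (which is exactly where the hypotheses $\alpha_1 = \beta_1 = 0$ enter), that the $z^{-3}$ contribution stabilizes to $\beta_2-2$, and that every higher-order contribution carries a residual factor of $N^{-1/2}$ or smaller. Once this is checked, the coefficient-wise convergence of $\RC_{s_N}$ follows because the operations $\log$ and $\frac{d}{dz}$ act continuously on coefficients of formal Laurent series at $\infty$.
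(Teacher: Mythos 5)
Your argument is correct and follows essentially the same route as the paper: linearize $\HH$ via Theorem~\ref{thm:convolution}, observe that after rescaling only the $(\omega(a^2)-2)/z^3$ term survives, and then recover $\RC_{s_N}$ from $\HH_{s_N}$ and the Boolean limit $G_{s_N}(z)\to z/(z^2-1)$. The only cosmetic difference is that you rederive the Boolean CLT from the $F$-linearization \eqref{eq:boole_conv} where the paper simply cites \cite{SW97}, and your bookkeeping of the powers of $N^{-1/2}$ (vanishing of the $z^{-2}$ term via $\varphi(a)=\omega(a)=0$, stabilization of the $z^{-3}$ term) matches the paper's scaling identity $\HH_{s_N}(z)=N^{3/2}\HH_{a_1^{(N)}}(\sqrt{N}z)$.
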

\begin{proof}
The Boolean central limit theorem \cite{SW97} asserts that 
$G_{s_N}(z) \to z/(z^2-1)$. Let $\alpha=\omega_N((a_i^{(N)})^2)$, then Theorem \ref{thm:convolution} yields 
 $$
 \HH_{s_N}(z) = N^{3/2} \HH_{a_1^{(N)}}(\sqrt{N}z) = N^{3/2}\left( \frac{\alpha-2}{(\sqrt{N}z)^3} +O(N^{-2})  \right) \to \frac{\alpha-2}{z^3}. 
 $$
Therefore 
$$
\RC_{s_N}(z) \to \frac{2z}{z^2-1} -\frac{2}{z}+  \frac{\alpha-2}{z^3} = \sum_{n\geq1} \frac{\RC_n}{z^{n+1}},  
$$
where 
$$
\RC_n= 
\begin{cases} 0, & \text{$n$ is odd}, \\
2, & \text{$n$ is even and $n \geq4$}, \\
\alpha, & n=2.  
\end{cases}
$$
\end{proof}

The limit law exhibits a large spectral gap:
\begin{corollary}\label{cor:CBCLT} In addition to the setting of Theorem \ref{thm:CBCLT}, suppose that $\alg{A}_N=T(H_N)$ for some Hilbert space $H_N$ 
and $\omega_N= \Tr_{H_N}$. Let $\lambda_N$ and $\mu_N$ be the largest and smallest eigenvalues of $s_N$, respectively. The following assertions hold: 
\begin{enumerate}[\rm(i)]
\item the multiplicities of $\lambda_N$ and $\mu_N$ are both one for sufficiently large $N$; 
\item $\lambda_N$ converges to $1$ and $\mu_N$ converges to $-1$ as $N\to\infty$; 
\item the remaining eigenvalues accumulate around 0:
 $$
 \lim_{N\to\infty}\dist(\spec(s_N)   \setminus\{\lambda_N,\mu_N\}, 0)=0 
 $$
\end{enumerate}
\end{corollary}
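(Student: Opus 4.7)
The plan is to translate the moment limits from Theorem~\ref{thm:CBCLT} into information about individual eigenvalues. Let $\lambda_1^{(N)}\ge\lambda_2^{(N)}\ge\cdots>0$ and $\mu_1^{(N)}\le\mu_2^{(N)}\le\cdots<0$ denote the positive and negative eigenvalues of $s_N$ listed with multiplicity, so that $\lambda_N=\lambda_1^{(N)}$ and $\mu_N=\mu_1^{(N)}$, and set $\alpha:=\lim_N\Tr(s_N^2)$. From $\Tr(s_N^2)\le\alpha+1$ for large $N$ one immediately gets the cardinality bound that for each $\eta>0$, the number of eigenvalues of $s_N$ of magnitude at least $\eta$ is at most $\eta^{-2}(\alpha+1)$; in particular, $\lambda_N,\mu_N$ and all other ``large'' eigenvalues exist and are well-behaved.

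I prove assertion (ii) first. The inequality $\lambda_N^{2k}\le\Tr(s_N^{2k})\to 2$ yields $\limsup_N\lambda_N\le 2^{1/(2k)}\to 1$ as $k\to\infty$, and symmetrically $\liminf_N\mu_N\ge-1$. For the matching lower bound I argue by contradiction: assume $\lambda_N\le 1-2\epsilon$ for some $\epsilon>0$ along some subsequence. By passing to a further subsequence there are two exhaustive cases. In case (A), $|\mu_N|\le 1-\epsilon$ holds as well, so every eigenvalue has magnitude at most $1-\epsilon$ and
\[
\Tr(s_N^{2k})\le(1-\epsilon)^{2k-2}\Tr(s_N^2)\le(1-\epsilon)^{2k-2}(\alpha+1),
\]
which for $k$ large contradicts $\Tr(s_N^{2k})\to 2$. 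In case (B), $|\mu_N|>1-\epsilon$ holds, and the odd moment enters. Writing $\Tr(s_N^{2k+1})=P_N-Q_N$ with $P_N=\sum_i(\lambda_i^{(N)})^{2k+1}$ and $Q_N=\sum_i|\mu_i^{(N)}|^{2k+1}$, the bound $P_N\le\lambda_N^{2k-1}\Tr(s_N^2)\le(1-2\epsilon)^{2k-1}(\alpha+1)$ together with $\Tr(s_N^{2k+1})\to 0$ forces $\limsup_N Q_N\le(1-2\epsilon)^{2k-1}(\alpha+1)$; but $Q_N\ge|\mu_N|^{2k+1}>(1-\epsilon)^{2k+1}$, and the ratio $\bigl((1-\epsilon)/(1-2\epsilon)\bigr)^{2k-1}$ blows up with $k$, a contradiction. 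The argument for $\mu_N\to-1$ is symmetric.

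Assertions (i) and (iii) then follow quickly from the sharp even-moment limit combined with (ii). If the multiplicity $m$ of $\lambda_N$ were at least $2$ along a subsequence, then $\Tr(s_N^{2k})\ge m\lambda_N^{2k}+|\mu_N|^{2k}\to m+1\ge 3$, contradicting $\Tr(s_N^{2k})\to 2$; the argument for $\mu_N$ is the same. Writing $\xi_N=\sup\{|\lambda|:\lambda\in\spec(s_N)\setminus\{\lambda_N,\mu_N\}\}$, we have $\Tr(s_N^{2k})\ge\lambda_N^{2k}+|\mu_N|^{2k}+\xi_N^{2k}$, which combined with $\Tr(s_N^{2k})\to 2$ and $\lambda_N^{2k}+|\mu_N|^{2k}\to 2$ gives $\xi_N^{2k}\to 0$ for each $k\ge 2$, i.e.\ $\xi_N\to 0$. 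The main obstacle is the proof of (ii): neither the even nor the odd moment limit alone suffices, and it is the combination via the case split on $|\mu_N|$ that makes the argument work.
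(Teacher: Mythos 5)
Your proof is correct, but it follows a genuinely different route from the paper. The paper's proof is essentially a citation: it observes that the limiting tracial moments are those of a fixed rank-two operator $s$ with eigenvalues $1,-1,0$ and then invokes \cite[Proposition 2.8]{CHS18} to conclude that $s_N\to s$ ``in eigenvalues,'' from which (i)--(iii) follow; the discrepancy at the second moment ($\Tr(s_N^2)=\alpha$, possibly $\neq 2$) is delegated to that machinery and to the remark following the corollary. You instead give a self-contained moment argument: even moments $\Tr(s_N^{2k})\to 2$ provide the upper bound $\limsup|\lambda_N|,\limsup|\mu_N|\le 1$ and the localization in (i) and (iii), while the odd moments $\Tr(s_N^{2k+1})\to 0$ break the $\pm$ symmetry via the case split on $|\mu_N|$, which is exactly what rules out, say, two eigenvalues near $+1$ and none near $-1$; the troublesome second moment enters only through the a priori bound $\Tr(s_N^2)\le\alpha+1$, so the $k=2$ anomaly never causes a problem. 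What your approach buys is elementarity and independence from \cite{CHS18}; what the paper's buys is brevity and access to the finer notion of eigenvalue convergence from that reference. Note also that your argument for (iii) proves the stronger statement $\sup\{|\nu|:\nu\in\spec(s_N)\setminus\{\lambda_N,\mu_N\}\}\to 0$, which implies the stated distance claim. Two cosmetic points you may wish to smooth out: one should take $\epsilon<1/2$ (harmless, since a smaller $\epsilon$ only weakens the hypothesis $\lambda_N\le 1-2\epsilon$), and in case (B) the chain $P_N\le\lambda_N^{2k-1}\Tr(s_N^2)$ should be replaced by $P_N\le\max(\lambda_N,0)^{2k-1}\Tr(s_N^2)$ to cover the degenerate possibility $\lambda_N<0$ (where $P_N=0$); neither affects the conclusion.
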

\begin{proof}
Let $s$ be a self-adjoint operator of rank two on a Hilbert space $K$ having
eigenvalues $-1, 1,0$. Then $\Tr_K(s^n) = 2$ for even $n \geq2$ and $\Tr_K(s^n)
= 0$ for odd $n \geq1$. The convergence of $\Tr_H(s_N^k)$ in Theorem
\ref{thm:CBCLT} and \cite[Proposition 2.8]{CHS18} imply that $s_N \to s$ in
eigenvalues and this concludes the argument.
\end{proof}
\begin{remark} As shown in the proof of Theorem \ref{thm:CBCLT}, $\Tr_{H_N}(s_N^2)$ converges (actually is equal) to $\alpha$ which might not equal $2 = \Tr_K(s^2)$. 
This difference of Hilbert-Schmidt norms is due to a large number of small eigenvalues of $s_N$ and does not contradict the convergence of eigenvalues; see \cite[Proposition 2.8, Proposition 2.10 and Remark 2.11]{CHS18}. 
\end{remark}

Now we come back to the original model, the adjacency matrix of the star product of rooted graphs. 

\begin{corollary}\label{cor:star-CLT} Suppose that $(\bG,o)$ is a  rooted graph with $\deg(o)\ge1$. Let $A_N$ be the adjacency matrix of the $N$-fold star product graph $(\bG,o) \starprod (\bG,o) \starprod \ldots \starprod (\bG,o)$. Let $\lambda_N$ and $\mu_N$ be the largest and smallest eigenvalues of $(\deg(o)N)^{-\frac{1}{2}}A_N$, respectively.  The following assertions hold: 
\begin{enumerate}[\rm(i)]
\item the multiplicities of $\lambda_N$ and $\mu_N$ are both one for sufficiently large $N$;
\item $\lambda_N$ converges to $1$ and $\mu_N$ converges to $-1$ as $N\to\infty$; 
\item
 $\displaystyle
 \lim_{N\to\infty}\dist(\spec((\deg(o)N)^{-\frac{1}{2}}A_N)   \setminus\{\lambda_N,\mu_N\}, 0)=0 
$
\end{enumerate}
\end{corollary}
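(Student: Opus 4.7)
The plan is to recognize $(\deg(o)N)^{-1/2}A_N$ as the scaled sum appearing in Corollary~\ref{cor:CBCLT} and then invoke that corollary directly. The iterated star product naturally supplies a cyclic-Boolean independent decomposition of $A_N$, so essentially the only substantive work is to verify the first- and second-moment normalizations.

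Concretely, from \eqref{eq:boole_sum} I would write
\[
A_N = \sum_{i=1}^N Y_i^{(N)}, \qquad Y_i^{(N)} := P_1 \otimes \cdots \otimes P_{i-1} \otimes A_{\bG} \otimes P_{i+1} \otimes \cdots \otimes P_N,
\]
acting on the star product Hilbert space $\hilbH_N$ (equivalently on $\ell^2$ of the vertex set of the iterated star product). By Example~\ref{exa:CB} together with Example~\ref{ex:star_product}, the family $\{Y_i^{(N)}\}_{i=1}^N$ is cyclic-Boolean independent in the cncps $(T(\hilbH_N),\varphi_N,\Tr)$, where $\varphi_N$ is the vacuum state at the iterated root $o_N$. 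Setting $a_i^{(N)} := \deg(o)^{-1/2}\, Y_i^{(N)}$ then gives $s_N := N^{-1/2}\sum_{i=1}^{N} a_i^{(N)} = (\deg(o)N)^{-1/2}A_N$.

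Next I would check the hypotheses of Corollary~\ref{cor:CBCLT}. Each $Y_i^{(N)}$ is an isomorphic embedded copy of $A_\bG$, so joint moments with respect to $\varphi_N$ and $\Tr$ depend only on $\bG$, not on $i$ or $N$. The absence of loops in $\bG$ gives $\varphi_N(a_i^{(N)}) = \deg(o)^{-1/2}\langle A_\bG\,\delta_o,\delta_o\rangle = 0$ and $\Tr(a_i^{(N)}) = \deg(o)^{-1/2}\Tr(A_\bG) = 0$. For the variance, the $(o,o)$-entry of $A_\bG^2$ counts closed walks of length two at $o$, which equals $\deg(o)$; hence $\varphi_N((a_i^{(N)})^2) = \deg(o)^{-1}\cdot \deg(o) = 1$. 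The hypothesis $\deg(o)\geq 1$ is precisely what makes the normalization $\deg(o)^{-1/2}$ meaningful.

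With these checks in place, Corollary~\ref{cor:CBCLT} applied to $s_N$ yields assertions (i)--(iii) verbatim. The hard part of the whole argument lies behind the scenes, namely in Corollary~\ref{cor:CBCLT} itself (which in turn depends on Theorem~\ref{thm:CBCLT} and the spectral convergence from \cite[Proposition 2.8]{CHS18}); here the remaining step is just to recognize the graph-theoretic situation as an instance of the abstract cyclic-Boolean CLT, with the routine walk-count computation for the variance being the only thing that requires any thought.
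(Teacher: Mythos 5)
Your proposal is correct and follows essentially the same route as the paper: decompose $A_N$ via \eqref{eq:boole_sum}, use Example~\ref{exa:CB} for cyclic-Boolean independence of the summands, and apply Corollary~\ref{cor:CBCLT} after the normalization $\deg(o)^{-1/2}$ coming from $\langle A_\bG^2\delta_o,\delta_o\rangle=\deg(o)$. Your explicit checks of the centering and of the $i$- and $N$-independence of the moments merely spell out what the paper's proof leaves implicit.
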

\begin{proof} This is a combination of Corollary \ref{cor:CBCLT},  formula \eqref{eq:boole_sum} and Example \ref{exa:CB}. The factor $(\deg(o)N)^{-\frac{1}{2}}$ appears because of the variance $\langle A_\bG^2\delta_o, \delta_o\rangle_{\ell^2(V)}= \deg(o)$, where $V$ is the vertex set of $\bG$.   
\end{proof}

\begin{remark} In the setting of Corollary \ref{cor:star-CLT} it is already known that, according to the Boolean central limit theorem,  the distribution of $(\deg(o)N)^{-\frac{1}{2}}A_N$ regarding the vector state $\varphi_N= \langle \cdot \delta_o, \delta_o\rangle$ converges weakly to $\frac{1}{2}(\delta_{-1} +\delta_1)$. This fact entails an intuitive consequence of Corollary \ref{cor:star-CLT}: the vector $\delta_o$ in the tensor product Hilbert space $\ell^2(V)^{\otimes N}$ is almost orthogonal to the subspace spanned by eigenvectors corresponding to small eigenvalues, or equivalently, $\delta_o$ is almost contained in the two-dimensional subspace spanned by the eigenvectors corresponding to the eigenvalues near $\pm1$.  
\end{remark}

\begin{example} Corollary \ref{cor:star-CLT} can be directly confirmed in the following examples.  
\begin{enumerate}[\rm(i)] 
\item For the star graph on $N+1$ vertices, its adjacency matrix divided by $\sqrt{N}$ has eigenvalues $\{[1]^1,[-1]^1,[0]^{N-1}\}$; see Example \ref{exa:star_graph}. Eigenvectors corresponding to the eigenvalues $1$ and $-1$ are $f_1=(\sqrt{N},1,1,\dots,1)$ and $f_2=(-\sqrt{N},1,1,\dots,1)$, respectively, and hence, the function $\delta_o$, which corresponds to the vector $(1,0,0,\dots,0)$, is exactly contained in the subspace spanned by $f_1$ and $f_2$. 

\item For the friendship graph on $2N+1$ vertices, its adjacency matrix divided by $\sqrt{2N}$ has eigenvalues 
$$
\left\{\left[-\sqrt{1+\frac{1}{8N}}-\frac{1}{2\sqrt{2N}}\right]^1, \left[-\frac{1}{\sqrt{2N}}\right]^{N}, \left[\frac{1}{\sqrt{2N}}\right]^{N-1}, \left[\sqrt{1+\frac{1}{8N}}+\frac{1}{2\sqrt{2N}}\right]^1\right\}; 
$$
see Example \ref{exa:friendship_graph}. 
\end{enumerate}

\end{example}

\section{Cyclic-Boolean cumulants}\label{sec5}

\subsection{Univariate cumulants}
Let $(\alg{A},\varphi,\omega)$ be a cncps and $a \in \alg{A}$. The generating function $\HH_a$ defined in \eqref{eq:linearization} has the series expansion 
$$
\HH_a(z) = \sum_{n=1}^\infty\frac{h_n(a)}{z^{n+1}}, 
$$
where the first two coefficients are $h_1(a) = \omega(a) -\varphi(a)$ and $h_2(a)=\omega(a^2) +\varphi(a)^2-2\varphi(a^2)$.  In general, $h_n(a)$ is of the form $\omega(a^n)-n \varphi(a^n) + (\text{polynomial on $\varphi(a), \dots, \varphi(a^{n-1})$})$. 

We can modify $\HH_a(z)$ by adding the Boolean cumulants to delete $-n \varphi(a^n)$ from $h_n(a)$. We switch from $\RC_a$ and $G_a$ to the moment generating functions 
$$
\MG_a(z) = \frac{1}{z}\RC_a\left(\frac{1}{z}\right) = \sum_{n\geq1} \omega(a^n)z^n, \qquad M_a(z) = \frac{1}{z}\left(G_a\left(\frac{1}{z}\right) -z\right) = \sum_{n\geq1} \varphi(a^n)z^n. 
$$
The Boolean cumulant transform \eqref{eq:Boolean_cumulant} is then expressed by 
$$
B_a(z) = \frac{M_a(z)}{1+M_a(z)} = \sum_{n\geq1} b_n(a)z^n. 
$$
We introduce the new generating function 
\begin{align}
\CB_a(z) &= \frac{1}{z}\HH_a\left(\frac{1}{z}\right) + zB_a'(z) = \MG_a(z) - \frac{z M_a(z)M_a'(z)}{(1+M_a(z))^2} \notag \\
&= \MG_a(z) -z M_a(z) B_a'(z), \label{eq:moment-cumulant-generating-function}
\end{align}
which linearizes the convolution 
\[\CB_{a+b}(z) = \CB_a(z)+\CB_b(z).
\]
The function $\CB_a$ will be called the \emph{cyclic-Boolean cumulant transform of $a$} and the coefficients $c_n(a)$ appearing as 
\begin{equation}\label{eq:CB_cumulants}
\CB_a(z) = \sum_{n\geq1} c_n(a)z^n 
\end{equation}
are called the (univariate) \emph{cyclic-boolean cumulants} of $a$.  The first two cumulants are 
\[
c_1(a)=\omega(a) \qquad{\rm and} \qquad c_2(a) = \omega(a^2) - \varphi(a)^2.
\]
 For general $n \geq2$, there exists a universal polynomial $P_n(x_1,\dots, x_{n-1})$ depending only on $n$ such that 
 \[
 c_n(a) =\omega(a^n) + P_n(\varphi(a), \dots, \varphi(a^{n-1})).
 \]

\subsection{Cyclic-interval partitions}
Cyclic-Boolean independence gives rise to an exchangeability system and 
we can define and compute the (multivariate) cyclic-Boolean cumulants using the methods of
\cite{Lehner:2004:cumulants1,HasebeLehner:cumulants5}.  
The relevant partition structure turns out to be cyclic-interval partitions, which were already discussed
in \cite{DAGSV} in their search for notions of independence, similar to Boolean and monotone ones, but such that the algebra of scalars, $\mathbb{C}$, is independent from any other algebra.

Before embarking on we recall some basic concepts on set partitions. 

\begin{definition} Let $k \in \N$. We often use the notation $[k]=\{1,2,\dots,k\}$. 
\begin{enumerate}[\rm(i)]
\item A \emph{set partition} of $[k]$ is a set $\pi=\{B_1,B_2,\dots, B_p\}$ of
 nonempty and  disjoint subsets $B_1, \dots, B_p$ of $[k]$, called
 \emph{blocks}, such that  their
 union is $[k]$. The length  $|\pi|$   of a partition $\pi$ is
 the number of blocks.  The set of the partitions of the set $[k]$ is denoted
 by $\SP(k)$. Set partitions are in one-to-one
 correspondence with equivalence relations:
 Any set partition 
 $\pi\in \SP(k)$ determines an equivalence relation $i \sim_\pi j$ on $[k]$ by
 requiring that $i, j$ belong to the same block of $\pi$; conversely, for an equivalence relation $\sim$ on $[k]$ its equivalence classes determine disjoint subsets of $[k]$ and hence a set partition. 

\item A subset of $[k]$ of form $\{i,i+1,\dots, j\}$ is called an \emph{interval}
 and a set partition of $[k]$ is called an \emph{interval partition} if all its blocks are intervals. The set of the interval partitions is denoted by $\Int(k).$

\item For set partitions $\sigma, \pi \in \SP(k)$ we write $\sigma \le \pi$ if every block of $\sigma$ is a subset of a block of $\pi$. This makes $\SP(k)$ a poset. The trivial set partition $\{[k]\}$ is the maximum of $\SP(k)$, which is denoted by $\hat1_k$.

\item A tuple $(i_1, \dots, i_k) \in \N^k$ induces a unique equivalence
 relation $ \sim $ on $[k]$ by the requirement that $p\sim q$ holds if and only
 if $i_p=i_q$. The corresponding set partition is called the
 \emph{kernel set partition}, denoted by $\kappa(i_1, \dots, i_k)$.

\end{enumerate}
\end{definition}
\begin{example} Some kernel set partitions are
\begin{align*}\label{eq:sp}
\kappa(6,3,2,3,6) &= \{\{3\},\{2,4\},\{1,5\}\}, \\
\kappa(2,7,4,7,4,2,4)&=\{\{1,6\},\{3,5,7\},\{2,4\}\}. 
\end{align*}
\end{example}

We will see that cyclic-Boolean cumulants $\CBC_\pi$ (defined in the next section) vanish identically unless $\pi$ is a \emph{cyclic-interval partition}.
\begin{definition}
  A partition $\pi\in\SP(n)$ is called a \emph{cyclic-interval partition}
  if every block is an interval or the complement of an interval.
  In other words, there is a cyclic permutation $\sigma\in\SG_n$ such that
  $\sigma\cdot\pi$ is an interval partition.
We denote by $\CI(n)$ the set of the cyclic-interval partitions of $[n]$.  
\end{definition}

As already noticed in \cite[Corollary 1]{DAGSV}, it is not difficult to see   
that the number of cyclic-interval partitions is $\abs{\CI(n)}=2^n-n$. 
To see this, the most convenient picture of cyclic-interval partitions is obtained
  by actually drawing them on a circle as show in Fig.~\ref{fig:cyclicintpart}.
  Then it is clear that a cyclic-interval partition is uniquely determined
  by the set of separators of the blocks. For the maximal partition $\hat{1}_n$
  this set is empty, while for all other cyclic-interval partitions there
  must be at least two separators. 
    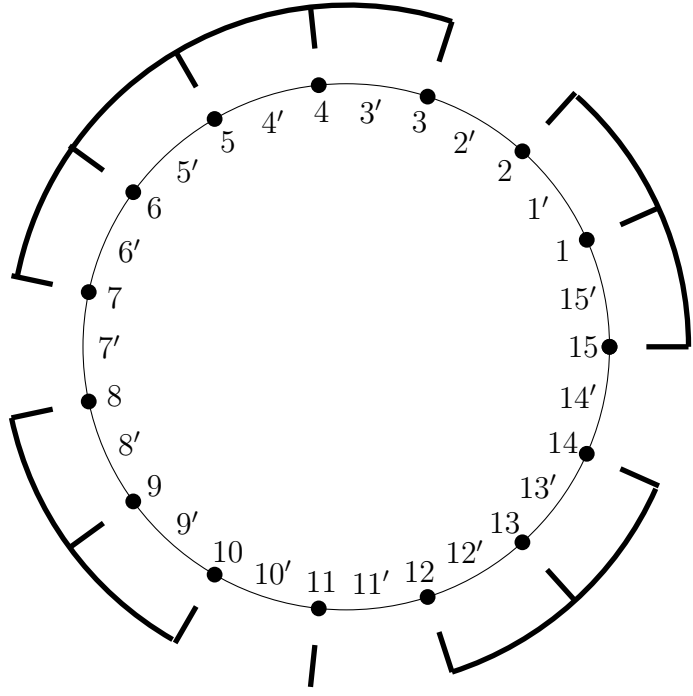
\begin{figure}
    \centering
  \begin{tikzpicture}[scale=0.7]
 \coordinate (C) at (0,0);
\def\N{15};
\def\rad{5};

 \coordinate (C0) at (0*360/\N:6.5);
\draw[line width=2] (C0) arc(0*360/\N : 2*360/\N : 6.4);

 \coordinate (C1) at (3*360/\N:6.5);
\draw[line width=2] (C1) arc(3*360/\N : 7.05*360/\N : 6.4);

 \coordinate (C2) at (8*360/\N:6.5);
\draw[line width=2] (C2) arc(8*360/\N : 10*360/\N : 6.4);

 \coordinate (C4) at (12*360/\N:6.5);
\draw[line width=2] (C4) arc(12*360/\N : 14*360/\N : 6.4);

\draw (0,0) circle (\rad);

\foreach \i in {0,...,\N}
\fill  (360/\N*\i:5) circle (0.15);

\foreach \i in {1,...,\N}
 \coordinate[label=center:$\i$] (D) at (360/\N*\i:4.5);

\foreach \i in {1,...,\N}
 \draw[line width=2]  (360/\N*\i:5.7) -- (360/\N*\i:6.5);

\foreach \i in {1,...,\N}
\coordinate[label=center:$\i'$] (D) at (360/\N*0.5+360/\N*\i:4.5);

\end{tikzpicture}
    \caption{The cyclic interval partition $\{1,2,15/3,4,5,6,7/8/9/10/11/12/13/14\}$ has separating set $\{2',7',10',11',14'\}$}
    \label{fig:cyclicintpart}
  \end{figure}

\subsection{Multivariate cumulants }

In order to avoid the discussion of positivity (see Remark~\ref{rem:positivity}
below) we notice  that one can
easily extend the definition of independence to a purely algebraic  setting without
positivity. Thus in this section we will
focus on an algebraic cyclic probability space
$(\alg{A},\varphi,\omega)$ without positivity structure, that is, $\alg{A}$ is
an algebra over $\C$, $\varphi$ is a linear functional and $\omega$ is a
tracial linear functional.

Take copies $\alg{A}_k$ of $\alg{A}$ and define the nonunital algebraic free product 
$$
\alg{U}=\mathop{\ast}_{k \in \N} \alg{A}_k = \bigoplus_{n\in \N} \bigoplus_{\substack{(k_1,\dots, k_n) \in \N^n \\ k_1 \neq \cdots \neq k_n}} \alg{A}_{k_1}\otimes  \cdots \otimes \alg{A}_{k_n} . 
$$ 
Let $\pi^{(i)}\colon a \mapsto a^{(i)}$ denote the embedding of $\alg{A}$ into $\alg{U}$ as the
$i$-th copy $\alg{A}_i$. By the universality of tensor products we can define
$(\tilde \varphi, \tilde \omega)$ on $\alg{U}$ as follows: for $n\geq1$, an alternating tuple $(k_1,\dots, k_n) \in \N^n$ and $a_i \in \mathcal{A}_{k_i}, i=1,2,\dots,n$, set 
\begin{align*}
\tilde \varphi(a_1 a_2\cdots a_n) &:= \varphi(a_1)\varphi(a_2) \cdots \varphi(a_n),   \\
\tilde \omega(a_1 a_2 \cdots a_n) &:=  
\begin{cases} 
\omega(a_1), &n=1, \\ 
\varphi(a_1)\varphi(a_2) \cdots \varphi(a_n), & n \geq2, k_1 \neq k_n, \\ 
 \varphi(a_n a_1) \varphi(a_2) \cdots \varphi(a_{n-1}), & n \geq2, k_1 = k_n. 
\end{cases}
\end{align*}
One can check that $\tilde \omega$ is a trace on $\alg{U}$, $\varphi = \tilde
\varphi \circ \pi^{(i)}$, $\omega = \tilde \omega \circ \pi^{(i)}$ and the
family of subalgebras $\{\pi^{(i)}(\alg{A})\}_{i=1}^\infty$ is cyclic-Boolean
independent in $(\alg{U}, \tilde\varphi,\tilde\omega)$.

\begin{remark}[Positivity] \label{rem:positivity}
  It is not clear under what conditions the product trace
  $\tilde{\omega}$ preserves positivity.
  First observe that the trivial example $\omega=0$ shows
  that some conditions are necessary.
  Indeed,
  if $\omega=0$ and $a,b\in\alg{A}$ are self-adjoint then
  $\tilde\omega((a^{(1)}+ b^{(2)})^2)= 2\varphi(a)\varphi(b)$,
  which can be   negative, although both $\varphi$ and $\omega$
  are positive.

  This example suggests that in order to expect positivity
  of $\tilde{\omega}$ one should at least require
  $\omega(x^*x)\geq \abs{\varphi(x)}^2$
  or
  $\omega(x^*x)\geq \varphi(x^*x)$.
  Both conditions however are not promoted
  to the cyclic free product.
  Although  the proof of
  \cite[Theorem 2.2]{BozejkoLeinertSpeicher:1996:convolution}
  adapts well to show
  $\tilde{\omega}(x^*x)\geq \abs{\tilde{\varphi}(x)}^2$ 
  for $x\in \alg{U}_{11}$ or $x\in \alg{U}_{22}$,
  where
  $$\alg{U}_{11} = \bigoplus_{n\geq 3} 
  \bigoplus_{\substack{(k_1,\dots, k_n) \in \N^n \\ k_1 \neq \cdots \neq k_n\\ k_1=k_n=1}} \alg{A}_{k_1}\otimes  \cdots \otimes \alg{A}_{k_n}
  $$
  etc.,
  the following example shows that positivity fails on elements mixing
  these subspaces.
  Choose $x,y,z\in \alg{A}_1$ and $w\in\alg{U}_{22}$
  and put $a = x+ywz$. Note that $ywz$ is alternating and we compute
  \begin{align*}
  \tilde{\omega}(a^*a)
  &= \omega(x^*x)
    + \tilde{\omega}(x^*ywz)
    + \tilde{\omega}(z^*w^*y^*x)
    + \tilde{\omega}(z^*w^*y^*ywz)\\
  &= \omega(x^*x) + \varphi(zx^*y)\,\varphi(w)+\varphi(w^*)\,\varphi(y^*xz^*)+\varphi(zz^*)\,\varphi(w^*)\,\varphi(y^*y)\,\varphi(w)
  \end{align*}
  and
  \begin{align*}
  \tilde{\varphi}(a^*a)
  &= \varphi(x^*x)
    + \tilde{\varphi}(x^*ywz)
    + \tilde{\varphi}(z^*w^*y^*x)
    + \tilde{\varphi}(z^*w^*y^*ywz)\\
  &= \varphi(x^*x) + \varphi(x^*y)\,\varphi(w)\,\varphi(z)+\varphi(z^*)\,\varphi(w^*)\,\varphi(y^*x)+\varphi(z^*)\,\varphi(w^*)\,\varphi(y^*y)\,\varphi(w)\,\varphi(z)
  \end{align*}
  Now choose $x,y,z,w$ such that
  $$
  \omega(x^*x)=\varphi(x^*x), 
  \qquad
  \varphi(z)=0, 
  \qquad
  \varphi(zx^*y),\varphi(w)\in\IR\setminus\{0\}. 
  $$
  Such a choice is possible, e.g., when $\omega=\varphi$ is a tracial state:
  first choose a unitary $u$ such that $\varphi(u)=0$
  and set $z=u$, $y=u^*$,
  then for selfadjoint $w$ and $x$
  we obtain
  $$
  \tilde{\omega}(a^*a)-  \tilde{\varphi}(a^*a)
  = 2\varphi(x)\varphi(w)+\abs{\varphi(w)}^2
  $$
  which can be made negative by an appropriate rescaling of  $x$. 
  
 The positivity condition $\omega(x^*x)\geq \abs{\varphi(x)}^2$ proves to be inappropriate as well. In the last specification, we further choose $x$ so that $|\varphi(x)|^2< \varphi(x^2) < 2 |\varphi(x)|^2$. Then 
  \[
 \tilde{\omega}(a^*a)-  |\tilde{\varphi}(a)|^2 
  = \varphi(x^2)- |\varphi(x)|^2 +  2\varphi(x)\varphi(w)+\abs{\varphi(w)}^2. 
  \]
  Replacing $x$ with $\lambda x, \lambda \in \R$, will change this value into 
  \[
  \left[ \varphi(x^2)- |\varphi(x)|^2\right] \left[ \lambda +  \frac{\varphi(x)\varphi(w)}{\varphi(x^2)- |\varphi(x)|^2}\right]^2 + \frac{|\varphi(w)|^2[\varphi(x^2)-2|\varphi(x)|^2]}{\varphi(x^2)- |\varphi(x)|^2}. 
  \] 
  Taking $\lambda = - \frac{\varphi(x)\varphi(w)}{\varphi(x^2)- |\varphi(x)|^2}$ will make this value negative.  
\end{remark}

Both pairs $(\alg{U},\tilde \varphi)$ and $(\alg{U},\tilde \omega)$ are
exchangeability systems in the sense of \cite[Definition
1.8]{Lehner:2004:cumulants1} except that we are not assuming unitality, which
however is not essential for the theory of cumulants. For the first pair we will get Boolean cumulants $B_\pi$ which are well known; therefore we will focus on $(\alg{U},\tilde \omega)$ from now on. 
The exchangeability of $(\alg{U},\tilde \omega)$ means that, for any $n\in \N$ and $a_1,\dots, a_n \in \alg{A}$, the value of the function 
\[
\N^n \ni (i_1,\dots, i_n)\mapsto\tilde \omega(a_1^{(i_1)} a_2^{(i_2)} \cdots a_n^{(i_n)}) \in \C
\]
is determined by the kernel set partition $\pi=\kappa(i_1,\dots, i_n)$. This value is denoted by $\omega_\pi(a_1,\dots, a_n)$, which then gives an $n$-linear functional $\omega_\pi\colon \alg{A}^n\to \C$. For each $\pi \in \SP(n)$ a partitioned cumulant $\CBC_\pi\colon \alg{A}^n \to \C$ is then defined by
\begin{align}\label{eq:c-m}
\CBC_\pi(a_1,a_2,\dots,a_n) 
= \sum_{\substack{\rho \in \SP(n) \\ \rho\leq\pi}} \omega_\rho(a_1,a_2,\dots,a_n)\,\mu(\rho,\pi), 
\end{align}
where $\mu$ is the M\"{o}bius function for the poset $\SP(n)$. By \cite[Lemma~4.18~(ii)]{HasebeLehner:cumulants5} or imitating the proof of \cite[Proposition 4.11]{Lehner:2004:cumulants1} we can prove that $\CBC_\pi =0$ if $\pi \in \SP(n) \setminus \CI(n)$. This vanishing property and the M\"{o}bius inversion of \eqref{eq:c-m} imply that 
\begin{equation}\label{eq:m-c}
\omega(a_1 a_2 \cdots a_n) = \sum_{\pi \in \CI(n)} \CBC_\pi(a_1,a_2,\dots, a_n). 
\end{equation}
To compute the non-vanishing cumulants we distinguish three cases.
\begin{enumerate}[(i)]
 \item 
Let $\pi\in\CI(n)$ and first assume that $\pi\in\Int(n)$. Now if $\pi<\hat{1}_n$ then $1\not\sim_\pi n$ and
moreover $1\not\sim_\rho n$ for any $\rho$ satisfying $\rho\leq\pi$.
This allows us to
replace $\tilde\omega$ by $\tilde\varphi$ to obtain 
\begin{align*}
\CBC_\pi(a_1,a_2,\dots,a_n) 
&= \sum_{\rho\leq\pi} \varphi_\rho(a_1,a_2,\dots,a_n)\,\mu(\rho,\pi) = B_\pi(a_1,a_2,\dots,a_n);  
\end{align*}
see \cite[Proposition 4.11]{Lehner:2004:cumulants1} for the last equality. 
\item 
 Let us assume next that $\pi\in\CI(n)\setminus\Int(n)$.
 This means that $\pi<\hat{1}_n$ and that $1\sim_\pi n$.
 In this case we cannot immediately replace $\omega$ by $\varphi$,
 but first must use the traciality of $\omega$ and rotate the partition
 $\pi$ into an element of $\Int(n)$.
 Indeed fix a cyclic permutation $\sigma\in\SG_n$ such that
 $\sigma\cdot\pi\in\Int(n)$. Then $1\not\sim_\pi n$ and
 also $1\not\sim_\rho n$ for any $\rho\leq \pi$ and we have
 \begin{align*}
   \CBC_\pi(a_1,a_2,\dots,a_n) 
   &=    \CBC_{\sigma\cdot\pi}(a_{\sigma^{-1}(1)},a_{\sigma^{-1}(2)},\dots,a_{\sigma^{-1}(n)})\\
   &= B_{\sigma\cdot\pi}(a_{\sigma^{-1}(1)},a_{\sigma^{-1}(2)},\dots,a_{\sigma^{-1}(n)})
   ;
 \end{align*}
 notice that the rotation cannot be reversed now because the Boolean
 cumulants are nontracial.
\item 
 Finally if $\pi=\hat{1}_n$ there is no direct formula but we infer from
 the moment-cumulant formula \eqref{eq:m-c} that
 $$
 \CBC_n (a_1,a_2,\dots,a_n) = \omega(a_1,a_2,\dots,a_n) - \sum_{\pi \in \CI(n), \pi < \hat 1_n} B_{\sigma\cdot\pi}(a_{\sigma^{-1}(1)},a_{\sigma^{-1}(2)},\dots,a_{\sigma^{-1}(n)}), 
 $$
 where for each $\pi$ an appropriate cyclic permutation $\sigma$ is chosen.
\end{enumerate}

\begin{remark}\label{rem:CB}
  Note that for univariate cumulants the rotation does not change the value
  of the cumulant and we can write
  \begin{equation}
    \label{eq:univariatecycboolcumulant}
    \omega(a^n) = c_n(a) + \sum_{\pi \in \CI(n), \pi < \hat 1_n} b_\pi(a),  
     \end{equation}
 where $b_\pi(a)= B_\pi(a,a,\dots, a)$ and $c_n(a)=\CBC_{\hat1_n}(a,a,\dots, a)$. 
\end{remark}

We can see that the definition of $c_n(a)$ in Remark \ref{rem:CB} coincides with that in \eqref{eq:CB_cumulants}. This can be confirmed from the definition and uniqueness of cumulants, but here we directly prove the formula \eqref{eq:moment-cumulant-generating-function} for $c_n(a)=\CBC_{\hat1_n}(a,a,\dots, a)$ using the recurrence relation 
\eqref{eq:univariatecycboolcumulant}. 
Decomposing $\CI(n)$ into $\Int(n)$ and $\CI(n)\setminus \Int(n)$ we obtain 
\begin{align*}
\sum_{\pi \in \CI(n), \pi \neq \hat1_n} b_\pi(a) 
&= \sum_{\pi \in \Int(n), \pi \neq \hat1_n} b_\pi(a) + \sum_{\substack{k,\ell \geq1\\ k+\ell \leq n-1}} b_{k+\ell}(a) \sum_{\sigma \in \Int(n-k-\ell)} b_\sigma(a) \\ 
&= \varphi(a^n) - b_n(a) +\sum_{\substack{k,\ell \geq1\\ k+\ell \leq n-1}} b_{k+\ell}(a)\, \varphi(a^{n-k-\ell}). 
\end{align*}
Multiplying the above by $z^n$ and taking the sum over $n$ in \eqref{eq:univariatecycboolcumulant} yields 
\begin{align*}
\MG_a(z) 
&= \CB_a(z) + M_a(z) - B_a(z) + \sum_{k,\ell \geq1} b_{k+\ell}(a)z^{k+\ell} \sum_{n\geq k+\ell+1}  \varphi(a^{n-k-\ell}) z^{n-k-\ell} \notag \\
&= \CB_a(z) + M_a(z) - B_a(z)  + (zB_a'(z) - B_a(z)) M_a(z) = \CB_a(z) +z M_a(z)B_a'(z). \label{eq:generating_cumulants}
\end{align*}

\section{Cyclic-Boolean infinite divisibility}\label{sec6}
This section is devoted to the definition and classification of infinite divisibility. Due to the lack of a precise notion of positivity (see Remark \ref{rem:positivity}), we are not able to treat general $\ast$-algebras with a state and a tracial linear functional. Hence, we give the definition of infinitely divisible distributions in the special setting of operators on Hilbert spaces where the linear functional $\omega$ is chosen to be the trace. 
\begin{definition}
Let $H$ be a Hilbert space and $\varphi$ a state on $B(H)$. An element $a \in T(H)_{\rm sa}$ is said to be cyclic-Boolean infinitely divisible if for any $n \in \N$ there exist a Hilbert space $H_n$ and a state $\varphi_n$ on $B(H_n)$ and cyclic-Boolean i.i.d.\ elements $a_1,\dots, a_n \in T(H_n)_{\rm sa}$ such that $a$ with respect to $(\varphi, \Tr_H)$ has the same distribution as $a_1 + \cdots + a_n$ with respect to $(\varphi_n, \Tr_{H_n})$. 
\end{definition}

Suppose that $a$ is a trace class selfadjoint operator and cyclic-Boolean ID. For each $n \geq2$, $a$ equals the sum of certain cyclic-Boolean iid random variables $a_{n,1},\dots, a_{n,n}$ in distribution, and let $t = 1/n$ and denote $\RC_t =\RC_{a_{n,i}}$ and $G_t=G_{a_{n,i}}$. 
Let $\{\lambda_i\}_{i\in I}$ be the set of mutually distinct eigenvalues of $a$ and $m_i$ be the multiplicity of $\lambda_i$. Setting $I_0=\{i \in I: \lambda_i\neq0\}$ we have
$$
\RC_a(z) = \sum_{i \in I_0} \frac{m_i \lambda_i}{z(z-\lambda_i)}. 
$$
Moreover, let $E_a$ be the spectral decomposition of $a$ and $p_i = \varphi(E_a(\{\lambda_i\}))\ge0$; then we have  
$$
G_a(z) = \sum_{i\in I'} \frac{p_i}{z- \lambda_i},  \qquad I'= \{i \in I: p_i >0\}. 
$$
 For later use we also set 
$$
I'_0= I' \cap I_0. 
$$
By calculus, we see that $G_a$ has a unique zero in each interval between neighboring poles and has no other zeros off the real line. Hence the set $\{\mu_j\}_{j \in J}$ of zeros of $G_a$ is contained in $\R$ and is interlacing with $\{\lambda_i\}_{i\in I'}$. 
\begin{lemma}
The factorization \label{lemma:factorization}
$$
G_a(z) = \frac{1}{z}\prod_{j\in J} \left(1-\frac{\mu_j}{z}\right) \prod_{i\in I'} \left(1-\frac{\lambda_i}{z}\right)^{-1}  
$$
holds for every $z \in \C \setminus (\{0\}\cup\{\lambda_i\}_{i\in I'})$.  
\end{lemma}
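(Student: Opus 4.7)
The plan is to recognize the identity as an equality of meromorphic functions with matching zeros, poles and asymptotic behaviour, reducing in the finite-rank case to a short computation with rational functions.

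I would first treat the case when $|I'|$ is finite. In that case $F_a(z) := 1/G_a(z)$ is a rational function whose simple zeros are precisely the points $\lambda_i$, $i\in I'$ (coming from the simple poles of $G_a$ with nonzero residues $p_i>0$) and whose simple poles are precisely the $\mu_j$ (the simple zeros of $G_a$ described in the paragraph preceding the lemma). The interlacing statement noted just above yields $|J| = |I'|-1$, while the expansion $G_a(z) = z^{-1} + O(z^{-2})$ at infinity, which follows from $\sum_{i\in I'} p_i = \varphi(I) = 1$, forces $F_a(z) = z + O(1)$. Since a rational function is determined by its zeros, poles (with multiplicities) and leading coefficient,
\[
F_a(z) \;=\; \frac{\prod_{i\in I'}(z-\lambda_i)}{\prod_{j\in J}(z-\mu_j)}.
\]
Inverting and extracting the single factor of $z$ corresponding to $|I'|-|J|=1$ yields exactly the claimed product formula.

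For the general trace-class case I would proceed by a convergence argument. Since $a$ is trace class, $\sum_{i\in I'}|\lambda_i|<\infty$, and the interlacing of $\{\mu_j\}$ with $\{\lambda_i\}$ bounds each $\mu_j$ by its two neighbouring $\lambda_i$, so $\sum_{j\in J}|\mu_j|<\infty$ as well. Consequently both infinite products converge absolutely and uniformly on compact subsets of $\mathbb{C}\setminus(\{0\}\cup\{\lambda_i\}_{i\in I'})$, and define a meromorphic function with precisely the same simple poles (at the $\lambda_i$) and simple zeros (at the $\mu_j$) as $G_a$. Truncating $I'$ to an increasing sequence of finite subsets $I_n\uparrow I'$, applying the finite-rank identity to the corresponding rational approximations of $G_a$, and passing to the locally uniform limit on compact subsets of $\mathbb{C}\setminus(\{0\}\cup\{\lambda_i\})$ then gives the identity in general.

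The main obstacle is making the infinite-rank limit rigorous: the naive truncations $G_n(z) = \sum_{i\in I_n} p_i/(z-\lambda_i)$ are not themselves Green functions of states because the residues no longer sum to $1$, so the finite-rank identity does not apply to them verbatim, and the positions of the zeros of $G_n$ differ from the actual $\mu_j$. An alternative (and probably cleaner) route is to work directly with $H(z) := z G_a(z)\prod_{i\in I'}(1-\lambda_i/z) = \sum_{i\in I'} p_i \prod_{k\neq i}(1-\lambda_k/z)$, check that $H$ is analytic and nonzero on $\mathbb{C}\setminus\{0\}$ except for simple zeros precisely at the $\mu_j$, verify $H(z)\to 1$ as $z\to\infty$, and then compare it with $K(z):=\prod_{j\in J}(1-\mu_j/z)$, which enjoys the same properties; the ratio $H/K$ extends to a nowhere-vanishing analytic function on $\mathbb{C}\setminus\{0\}$ tending to $1$ at infinity, and matching the Laurent coefficients at $\infty$ (which encode the moments $\varphi(a^n)$ and the elementary symmetric functions in the $\lambda_i$ and $\mu_j$) forces $H\equiv K$.
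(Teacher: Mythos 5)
Your finite-rank argument is correct, and it is essentially what the paper does there (the paper simply notes the finite case is immediate because $G_a$ is rational). The real content of the lemma is the case of infinitely many eigenvalues, and neither of your two routes closes it. For the truncation route you correctly identify the obstacle -- the naive truncations $\sum_{i\in I_n}p_i/(z-\lambda_i)$ are not Cauchy transforms of probability measures and their zeros are not the $\mu_j$ -- but you leave it unresolved. The paper's fix is to truncate differently: for each $\epsilon>0$ the weights of all eigenvalues with $|\lambda_k^\pm|\le\epsilon$ are transferred to the atom at $0$, so the truncated function $G_a^\epsilon$ still has total mass $1$, is rational, and factorizes by the finite case; its nonzero zeros $\mu_k^\pm(\epsilon)$ satisfy $|\mu_k^\pm(\epsilon)|\le|\lambda_k^\pm|$ and converge to the $\mu_k^\pm$ because $G_a^\epsilon\to G_a$ locally uniformly, and the factorization passes to the limit by a Weierstrass M-test after grouping the factors as quotients $(z-\mu_k^\pm)/(z-\lambda_k^\pm)$, using the trace-class summability $\sum_i|\lambda_i|<\infty$. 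Without some device of this kind your first route does not produce the lemma.

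The ``cleaner alternative'' has a genuine gap at its last step. Knowing that $H/K$ extends to a nowhere-vanishing analytic function on $\C\setminus\{0\}$ which tends to $1$ at infinity does not force $H\equiv K$: the function $e^{1/z}$ has all of these properties. In the infinite case both $H$ and $K$ have essential singularities at $0$ (and the $\mu_j$ accumulate there), so one must either control the behaviour of $H/K$ near $0$ or actually verify that all coefficients of the expansions of $H$ and $K$ at infinity coincide. But those coefficients are, respectively, weighted elementary symmetric functions of the $\lambda_i$ (with the weights $p_i$) and the elementary symmetric functions of the $\mu_j$, and their equality for every order is precisely the statement of the lemma; you assert that matching them ``forces'' $H\equiv K$ without giving any independent argument, so as written the step is circular. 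Any honest completion of this route appears to require a finite-rank approximation of the kind the paper carries out.
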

\begin{proof}
 When the set $I'$ is finite, the conclusion is easily proved since $G_a$ is a rational function. We may then assume that $I'$ and hence $I_0'$ is an infinite set. We decompose the set $\{\lambda_i\}_{i \in I_0'}$ into the positive part $\{\lambda_k^+\}_{k =1}^{n_+}$ and the negative part $\{\lambda_k^-\}_{k =1}^{n_-}$  arranged in the way
\[
\lambda_1^- < \lambda_2^- <  \cdots < 0 < \cdots < \lambda_2^+ < \lambda_1^+,   
\]
where $n_\pm \in \N \cup \{0,\infty\}$. By our assumption, $n_-$ or $n_+$ is infinity.  We rewrite the function $G_a$ into the form 
$$
G_a(z) = \frac{p}{z} + \sum_{k= 1}^{n_+} \frac{p_k^+}{z- \lambda_k^+}+ \sum_{k=1}^{n_-} \frac{p_k^-}{z- \lambda_k^-}, \qquad p = \sum_{i \in I': \lambda_i=0} p_i  \in [0,1],  
$$
where $p_k^\pm$ is weight of $\lambda_k^\pm$ with respect to $\varphi$. 
Now introduce $n_\pm(\epsilon)=\max\{k \ge1: |\lambda_k^\pm| > \epsilon \}< \infty$ 
and then the truncated function
\[
G_a^{\epsilon}(z) 
= \frac{p(\epsilon)}{z} + \sum_{k=1}^{n_+(\epsilon)} \frac{p_k^+}{z- \lambda_k^+}+ \sum_{k=1}^{n_-(\epsilon)} \frac{p_k^-}{z- \lambda_k^-},\qquad p(\epsilon)= p   + \sum_{k: |\lambda_k^+|\le \epsilon} p_k^+ +\sum_{k: |\lambda_k^-|\le \epsilon} p_k^-. 
\]
The fact that $n_-$ or $n_+$ is infinity implies that $p(\epsilon)>0$ for every $\epsilon>0$. Since $G_a^\epsilon$ is a rational function,  we have 
\begin{equation} \label{eq:truncation}
G_a^{\epsilon}(z) =\frac{1}{z}\prod_{k=1}^{n_-(\epsilon)}\left(\frac{z-\mu_k^-(\epsilon)}{z-\lambda_k^-}\right) \prod_{k=1}^{n_+(\epsilon)} \left(\frac{z-\mu_k^+(\epsilon)}{z-\lambda_k^+}\right), 
\end{equation}
where $\mu_k^\pm(\epsilon)$ is the unique zero of $G_a^{\epsilon}$ on the interval between $\lambda_{k}^\pm$ and $\lambda_{k+1}^\pm$ for $k=1,2,\dots, n_\pm(\epsilon)-1$ and $\mu_{n_\pm(\epsilon)}^\pm(\epsilon)$ is the unique zero of $G_a$ on the interval between $0$ and $\lambda_{n_\pm(\epsilon)}^\pm$.  

In order to pass to the limit in \eqref{eq:truncation}, first note that $n_\pm(\epsilon)\to n_\pm$ as $\epsilon\to0$. Since $G_a^\epsilon$ converges locally uniformly to $G_a$ on $\C\setminus\{0,\lambda_k^+,\lambda_k^-: k \ge1\}$, we conclude that $\mu_k^\pm(\epsilon)$ converges to $\mu_k^\pm$ as $\epsilon \to 0$ for each $k$. From (the product version of) Weierstrass' M-test (recall that $|\mu_k^\pm(\epsilon)|\le |\lambda_k^\pm|$) we obtain 
\[
G_a(z) =\frac{1}{z}\prod_{k=1}^{n_-}\left(\frac{z-\mu_k^-}{z-\lambda_k^-}\right) \prod_{k=1}^{n_+} \left(\frac{z-\mu_k^+}{z-\lambda_k^+}\right), 
\]
the desired formula. 
\end{proof}

Now we are able to characterize infinitely divisible measures with respect to cyclic independence.

First, notice that taking the logarithmic derivative in Lemma \ref{lemma:factorization} yields that 
$$
\frac{G_a'(z)}{G_a(z)} = -\frac{1}{z}-\sum_{i \in I'_0} \frac{\lambda_i}{z(z-\lambda_i)} + \sum_{j \in J} \frac{\mu_j}{z(z-\mu_j)}. 
$$
As before, let $t = 1/n$ and denote $\RC_t =\RC_{a_{n,i}}$ and $G_t=G_{a_{n,i}}$. Let $\{\lambda_i(t)\}_{i\in I'_0(t)}$ be the set of (mutually distinct) non-zero poles of $G_t$. Since 
$$
G_t(z) =\frac{G_a(z)}{(1-t)zG_a(z)+t}, 
$$ 
the set $\{\lambda_i(t)\}_{i\in I'_0(t)}$ is exactly the set of the zeros of $(1-t) z G_a(z) +t$. On the other hand, the set of zeros of $G_t$ is exactly the set of the zeros of $G_a$, and hence 
$$
\frac{G_t'(z)}{G_t(z)} = -\frac{1}{z}-\sum_{i \in I'_0(t)} \frac{\lambda_i(t)}{z(z-\lambda_i(t))} + \sum_{j \in J} \frac{\mu_j}{z(z-\mu_j)}. 
$$

By Theorem \ref{thm:convolution} we have
$$
\RC_a + \frac{G_a'}{G_a} + \frac{1}{z} = n \left( \RC_t + \frac{G_t'}{G_t} + \frac{1}{z}  \right) 
$$
and hence
\begin{align*}
\RC_t 
&= t \RC_a + t  \frac{G_a'}{G_a} -  \frac{G_t'}{G_t} + \frac{t-1}{z} \\
&= t \sum_{i\in I_0} \frac{m_i \lambda_i}{z(z-\lambda_i)} + t \left(-\frac{1}{z} -\sum_{i \in I'_0} \frac{\lambda_i}{z(z-\lambda_i)} + \sum_{j \in J} \frac{\mu_j}{z(z-\mu_j)}  \right) \\
&\qquad - \left(-\frac{1}{z}  -\sum_{i \in I'_0(t)} \frac{\lambda_i(t)}{z(z-\lambda_i(t))} + \sum_{j \in J} \frac{\mu_j}{z(z-\mu_j)}  \right)+\frac{t-1}{z}  \\
&= t \sum_{i\in I'_0} \frac{(m_i-1) \lambda_i}{z(z-\lambda_i)} +t \sum_{i\in I_0 \setminus I'_0} \frac{m_i \lambda_i}{z(z-\lambda_i)}  -(1-t) \sum_{j \in J} \frac{\mu_j}{z(z-\mu_j)}  +\sum_{i \in I'_0(t)} \frac{\lambda_i(t)}{z(z-\lambda_i(t))}. 
\end{align*}
Now, for each non-zero real $\alpha$ the number $\lim_{z\to \alpha} (z-\alpha)\RC_t(z)$ is non-negative, as it is a positive integer if $\alpha$ is a non-zero eigenvalue of $a_{n,1}$ and zero otherwise. 

Therefore, to cancel the negative coefficient  $-(1-t)$ above, the only possibility is that each non-zero $\mu_j$ must be a member of $\{\lambda_i\}_{i \in I_0}\cup \{\lambda_i(t)\}_{i\in I_0'(t)}$ and because of interlacing of the zeros and poles of $G_a$, one sees that $\mu_j$ can not be included in $\{\lambda_i\}_{i\in I_0'} \cup \{\lambda_i(t)\}_{i\in I_0'(t)}$ as it is a zero of $G_a$. 

It is possible that $\mu_j =\lambda_i$ for some $i\in  I_0 \setminus I_0'$.  In this case, however, for $t>0$ sufficiently small (namely, $n$ sufficiently large) the coefficient $t m_i-(1-t)$ is negative; therefore, we conclude that $\mu_j =0$ for all $j \in J$ or $J=\emptyset$, and hence $\#J=0$ or $1$.  This happens only if $\#I' = 0,1$ or $2$. 

On the other hand, for $i \in I_0'$ we have $\lim_{z\to \lambda_i} (z-\lambda_i)\RC_t(z)= t (m_i-1)$ which must be a non-negative integer for any $t=1/n$. Therefore, we conclude that $m_i=1$ for all $i\in I_0'$. For $i \in I_0 \setminus I_0'$ we have $\lim_{z\to \lambda_i} (z-\lambda_i) \RC_t(z)= t m_i$, which cannot be an integer for sufficiently small $t$, and hence $I_0'= I_0$. 

Now it remains to study the possible cases for $I_0'=I_0=0,1$ or $2$.

Case 1: $\#I_0'=\#I_0=0 $ or 1.  Then $G_a(z) = 1/(z-\alpha)$ for some $\alpha \in \R$ and $J=\emptyset$. The function $\RC_a(z)= \frac{\alpha}{z(z-\alpha)}$ and $\RC_t(z) = t\alpha /(z-t \alpha)$. 

Case 2: $\#I_0'=\#I_0=2$. Note that $G_a$ cannot have a pole at 0 because it would create a non-zero $\mu_j$.  Hence $G_a(z) = p/(z-\alpha) + (1-p)/(z-\beta)$ for some $\alpha, \beta \neq0, \alpha <\beta, 0<p<1$, and $G_a(0)= -p/\alpha - (1-p)/\beta =0$. The last condition yields the restriction that $\alpha < 0 < \beta$ and $G_a(z)=z/[(z-\alpha)(z-\beta)]$. Solving the equation $(1-t)zG_a(z)+t=0$ we obtain two solutions 
$$
\lambda_\pm(t) = \frac{t(\alpha+\beta) \pm\sqrt{t^2(\alpha+\beta)^2 -4t \alpha\beta}}{2}.  
$$
The eigenvalues can be retrieved from the formula
$$
\RC_t(z) = \frac{\lambda_+(t)}{z(z-\lambda_+(t))}+ \frac{\lambda_-(t)}{z(z-\lambda_-(t))}. 
$$

It is easy to see that the above cases are actually cyclic Boolean ID. Thus we arrive to the following.

\begin{theorem} \label{thm:CBID}
Let $H$ be a Hilbert space and $\varphi$ be a state on $T(H)$. An element $a \in T(H)_{\rm sa}$ is cyclic-Boolean ID with respect to $(\varphi,\Tr_H)$ if and only if $a$ has either 
\begin{enumerate}[\rm (i)]
\item only zero eigenvalues (that is, $a=0$), 
\item only one non-zero eigenvalue and its multiplicity is one, or
\item exactly two non-zero eigenvalues $\alpha, \beta$, their multiplicities are one, $\alpha \beta <0$ and the distribution of $a$ with respect to $\varphi$ is 
$$
\frac{-\alpha}{\beta-\alpha} \delta_{\alpha} + \frac{\beta}{\beta-\alpha} \delta_{\beta}. 
$$
\end{enumerate}
In the last case, for every $n\geq2$ an $n$-th root of $a$ has two non-zero eigenvalues $\alpha_n,\beta_n$ given as the solutions to the equation 
$$
x^2 -\frac{\alpha+\beta}{n}x + \frac{\alpha\beta}{n}=0, 
$$
and the distribution with respect to the state is 
$$
\frac{-\alpha_n}{\beta_n-\alpha_n} \delta_{\alpha_n} + \frac{\beta_n}{\beta_n-\alpha_n} \delta_{\beta_n}. 
$$ 
\end{theorem}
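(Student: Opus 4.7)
My plan is to use the linearization from Theorem~\ref{thm:convolution} to convert the infinite-divisibility hypothesis into an equation for $\RC_t$ (where $t=1/n$), and then exploit the fact that at every non-zero eigenvalue of an $n$-th root the residue of $\RC_t$ must be a non-negative integer (the eigenvalue multiplicity). This integrality/positivity for every $n$ forces very rigid constraints on the poles and zeros of $G_a$.

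First I would write $\RC_t$ explicitly. The convolution identity gives
\[
\RC_t = t\,\RC_a + t\,\frac{G_a'}{G_a} - \frac{G_t'}{G_t} + \frac{t-1}{z},
\]
and since $G_t = G_a/((1-t)zG_a+t)$, the zeros of $G_t$ coincide with those of $G_a$ while its non-zero poles $\{\lambda_i(t)\}_{i\in I_0'(t)}$ are the non-zero roots of $(1-t)zG_a(z)+t$. Using Lemma~\ref{lemma:factorization} and taking logarithmic derivatives, I would expand both $G_a'/G_a$ and $G_t'/G_t$ as sums of simple poles at the $\lambda_i$, at the $\lambda_i(t)$, and at the common zeros $\{\mu_j\}_{j\in J}$ of $G_a$ and $G_t$. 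Substitution yields a partial-fraction formula for $\RC_t$ in which each $\mu_j$ appears with coefficient $-(1-t)$, each $\lambda_i\in I_0'$ with coefficient $t(m_i-1)$, each $\lambda_i\in I_0\setminus I_0'$ with coefficient $tm_i$, and each $\lambda_i(t)$ with coefficient $1$.

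The crux is to eliminate the negative residues at the $\mu_j$. By interlacing of poles and zeros of $G_a$ no $\mu_j\in\R\setminus\{0\}$ can coincide with any $\lambda_i\in I_0'$, and the coefficient $tm_i-(1-t)$ at a candidate collision with $\lambda_i\in I_0\setminus I_0'$ is negative once $n$ is large, so such collisions are also excluded; collisions with the $\lambda_i(t)$ can be ruled out for all but finitely many $n$ since the $\mu_j$ are fixed while $\lambda_i(t)$ move with $t$. Hence every non-zero $\mu_j$ is impossible, which by Lemma~\ref{lemma:factorization} forces $|I'|\in\{0,1,2\}$. The same integrality applied to the residues $t(m_i-1)$ and $tm_i$ forces $m_i=1$ for $i\in I_0'$ and $I_0=I_0'$. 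Going through the cases $|I_0'|=0,1,2$, and using in the last case that $G_a(0)=0$ (since $\mu_j=0$ is permitted whereas no other $\mu_j$ exists), yields precisely the three alternatives in the statement.

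For the sufficiency direction I would plug in the candidate $n$-th roots $\alpha_n,\beta_n$ of $x^2-(\alpha+\beta)x/n+\alpha\beta/n=0$, verify directly that the corresponding $G$-function satisfies the Boolean linearization $1/G_t = (1-t)z + t/G_a$ with $t=1/n$, and apply Theorem~\ref{thm:convolution} to confirm that the resulting $\RC_t$ is consistent with an honest finite-rank self-adjoint operator. The main obstacle I anticipate is bookkeeping when $I_0'$ is infinite, which is handled by the truncation argument already packaged into Lemma~\ref{lemma:factorization}; a secondary subtlety is ensuring that the collision-cancellation argument between $\mu_j$ and $\lambda_i(t)$ is valid for infinitely many $n$, which follows because only finitely many $t$ can solve the algebraic equation $\mu_j=\lambda_i(t)$.
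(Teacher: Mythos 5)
Your proposal follows essentially the same route as the paper's own argument: linearize via Theorem~\ref{thm:convolution} to express $\RC_t$ ($t=1/n$) through the factorization of Lemma~\ref{lemma:factorization}, impose that residues of $\RC_t$ at non-zero reals are non-negative integers to kill the $-(1-t)$ coefficients at the zeros $\mu_j$ and to force $m_i=1$ and $I_0'=I_0$, and then run the case analysis $\#I_0'\in\{0,1,2\}$ with $G_a(0)=0$ in the last case; the sufficiency check is treated at the same brief level as the paper's. The only cosmetic deviation is your exclusion of collisions $\mu_j=\lambda_i(t)$ for all but finitely many $n$, where the paper notes the sharper fact that such a collision is impossible for every $t$ because $G_a(\mu_j)=0$ gives $(1-t)\mu_j G_a(\mu_j)+t=t\neq0$ — both suffice for the contradiction.
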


\begin{example}
The matrix
$$
A =\begin{pmatrix} 0 & 1 \\1 & 0\\ \end{pmatrix}
$$ 
has eigenvalues $\{[1]^1,[-1]^1\}$ and its distribution with respect to the unit vector $e_1={}^t(1,0)$ is 
 $$
 \frac{1}{2}\delta_{-1} + \frac{1}{2}\delta_1.
 $$
By Theorem \ref{thm:CBID}, $A$ is cyclic-Boolean infinitely divisible with respect to $(\langle\, \cdot \, e_1, e_1\rangle_{\C^2} , \Tr_{\C^2})$. 
\end{example}

\section{Cyclic-monotone independence}\label{sec7}

\subsection{Definition and example}

We perform an investigation for monotone independence in a spirit similar to
cyclic-Boolean independence.  To this end we start from a specific operator
model inspired by the comb product of rooted graphs in Section \ref{sec:combproduct}. 

\begin{example}
\label{ex:cyclicmonotone}  
Let $H_i, i\in \N$, be \emph{finite-dimensional} Hilbert spaces with
distinguished unit vectors $\xi_i \in H_i$ respectively. Let $P_i\colon H_i
\to H_i$ be the orthogonal projection onto $\mathbb C \xi_i$ and $\varphi_i$
be the vector state on $B(H_i)$ defined by $\xi_i$. Let $H= H_1 \otimes \cdots
\otimes H_N$, $\xi = \xi_1 \otimes \cdots \otimes \xi_N$ and $\varphi$
be the vacuum state on $B(H)$ defined by $\xi$. This is the same setting
as in Example~\ref{exa:CB} with the additional requirement of finite
dimensionality.  
Analogously to the embedding  \eqref{eq:embedbool}
we introduce another embedding of $B(H_i)$ into $B(H)$:
\begin{equation}
    \label{eq:embedmono}
\sigma_i(A) = I_{H_1} \otimes \cdots \otimes I_{H_{i-1}} \otimes A \otimes P_{i+1} \otimes \cdots \otimes P_N. 
\end{equation}
Note that this embedding does not preserve trace class and therefore
the construction is restricted to finite dimensional spaces.
It is known that the family $\{\sigma_i(B(H_i))\}_{i=1}^N$ is monotonically independent with respect to $\varphi$ ; see \cite[Theorem 8.9]{MR2316893}. 

In addition, we can compute moments with respect to the trace. Again formula
\eqref{eq:proj} is crucial: for a cyclically alternating tuple $(i_1,\dots, i_n) \in [N]^n$ and $A_k \in B(H_{i_k})$, if $p \in [n]$ is such that $i_{p-1} < i_p > i_{p+1}$ (with the conventions $i_0=i_n$ and $i_{n+1}=i_1$) then direct computations entail  
$$
\Tr_H(\sigma_{i_1}(A_1) \cdots \sigma_{i_n} (A_n)) = 
\varphi_{i_p}(A_p)\Tr_H\left[\sigma_{i_1}(A_1) \cdots\sigma_{i_{p-1}} (A_{p-1}) \sigma_{i_{p+1}}(A_{p+1}) \cdots  \sigma_{i_n} (A_n)\right], \quad n \geq2. 
$$
\end{example}

This example can be abstracted in the following way. 

\begin{definition}\label{def:cyclic_monotone} Let $(\mathcal{A},\varphi,\omega)$ be a cncps, $I$ be a toset, and $\hat I := \{-\infty\}\cup I$ be an enlargement of $I$, where $-\infty$ is the minimum of $\hat I$. An ordered family of $*$-subalgebras $\{\mathcal{A}_i\}_{i\in I}$ of $\alg{A}$ is said to be cyclic-monotone independent if 
\begin{enumerate}[\rm(i)]
\item it is monotonically independent with respect to $\varphi$, that is, for any $n\geq2$, any alternating tuple $(i_1,\dots, i_n) \in I^n$ (namely $i_1 \neq \cdots \neq i_n$) and $a_k \in \mathcal{A}_{i_k}, k=1,2,\dots,n$, if $p \in [n]$ is such that $i_{p-1} < i_p > i_{p+1}$ (with conventions $i_0=i_{n+1}=-\infty$) then 
$$
\varphi(a_1 \cdots a_n) = \varphi(a_p)\varphi(a_1 \cdots a_{p-1} a_{p+1} \cdots a_n);  
$$
\item for any $n\geq2$, cyclically alternating tuple $(i_1,\dots, i_n) \in I^n$ (namely $i_1 \neq \cdots \neq i_n \neq i_1$) and $a_k \in \mathcal{A}_{i_k}, k=1,2,\dots,n$, if $p \in [n]$ is such that $i_{p-1} < i_p > i_{p+1}$  (with different conventions $i_0=i_n$ and $i_{n+1}=i_1$) then 
$$
\omega(a_1 \cdots a_n) = 
\varphi(a_p)\omega(a_1 \cdots a_{p-1} a_{p+1} \cdots a_n). 
$$
\end{enumerate}
\end{definition}
\begin{definition} Let $(\mathcal{A},\varphi,\omega)$ be a cncps and $I$ be a toset. An ordered family of elements $\{a_i\}_{i\in I}$ of $\alg{A}$ is said to be cyclic-monotone independent if so is $\{\alg{A}_i\}_{i\in I}$, where $\alg{A}_i$ is the $\ast$-algebra generated by $a_i$ without unit. 
\end{definition}

\begin{example} Suppose that $(a,b,c)$ is cyclic-monotone independent in $(\mathcal{A},\varphi,\omega)$. Then 
\[
\varphi(ba^2bac^2b) = \varphi(c^2)\varphi(b)^3 \varphi(a^3) 
\]
and 
\[
\omega(ba^2bac^2b) = \varphi(c^2)\varphi(b) \varphi(b^2) \omega(a^3).
\]
\end{example}

\begin{remark}
Cyclic-monotone independence already appeared in the random matrix model in
\cite{CHS18} (see also \cite{MR3798863,MR4260215} and Section \ref{sec1}),
where independence was defined for a pair of $\ast$-subalgebras and only for
$\omega$. For a random matrix model for monotone independence see \cite{CebronDahlqvist}.

In \cite{CHS18} the trace functional $\omega$ is unbounded,
because it can diverge in the large dimensional limit, and therefore a domain
for $\omega$ was specified.
To avoid this problem
in the present paper we focus on finite dimensional Hilbert spaces and $\omega=\Tr$.

It should be noticed that Example \ref{ex:cyclicmonotone} does not provide an i.i.d.\ operator model even when $H_i =K$ does not depend on $i$; for $A \in B(K)$ the operators $\{\sigma_{i}(A)\}_{i=1}^N$ are identically distributed with respect to $\varphi$, but not with respect to $\omega$, because 
\[
\omega(\sigma_{i}(A)) = d^{i-1} \Tr_K(A), 
\]  
where $d = \dim (K)$. In fact we do not know of any non-trivial operator model
for cyclic monotone i.i.d.\ random variables and for this reason
we do not see any meaningful notions of cumulants and of infinitely divisible distributions. 

\end{remark}

\subsection{Cyclic-monotone convolution}
The convolution formula can be verified in ways. 
\begin{theorem}\label{thm:CM_convolution} Let $(\mathcal A,\varphi, \omega)$ be a cncps and $a,b\in \alg{A}$. Suppose that $(a,b)$ is cyclic-monotone independent. We then have
\[
\RC_{a+b}(z)  = \RC_b (z) +F_b'(z)\RC_a(F_b(z)). 
\]
\end{theorem}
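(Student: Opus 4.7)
My plan is to give two parallel proofs in analogy with Theorem~\ref{thm:convolution}: an algebraic one via formal generating functions, and an analytic one via the Schur complement applied to the operator model of Example~\ref{ex:cyclicmonotone}.

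For the algebraic proof, the essential input from cyclic-monotone independence with $a<b$ is that in any cyclically alternating product of powers of $a$ and $b$, every $b$-block is a local maximum; condition~(ii) of Definition~\ref{def:cyclic_monotone} applied iteratively then yields
\[
\omega(w) \;=\; \omega(a^{m(w)})\,\prod_{j=1}^{k(w)}\varphi(b^{q_j(w)})
\]
for any monomial $w\in\{a,b\}^n$ containing both letters, where $m(w)$ counts the $a$'s in $w$ and $(q_j(w))$ is the cyclic sequence of $b$-block sizes. I would then expand $(a+b)^n$, partition the $2^n$ monomials into the two pure cases $a^n$, $b^n$ and the four mixed shapes AA, AB, BA, BB (classified by first and last letter), and sum to obtain the moment generating function $\omega_{a+b}(z):=\sum_{n\geq1}\omega((a+b)^n)z^n$. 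Once the compositions of the $a$-blocks are summed using the identity $\sum_{K\geq0}\alpha_{K+1}(z)\,x^K=\omega_a(z(1+x))/(1+x)$ with $\alpha_K(z):=\sum_{m}\binom{m-1}{K-1}\omega(a^m)z^m$, the four shape contributions combine via the key algebraic identity
\[
1+2M_b(z)+\bigl(zM_b'(z)-M_b(z)\bigr) \;=\; \bigl(z(1+M_b(z))\bigr)' \;=:\; H'(z)
\]
(the term $zM_b'(z)-M_b(z)$ comes from the cyclic merging of the first and last $b$-blocks in shape BB) to produce
\[
\omega_{a+b}(z) \;=\; \omega_b(z) + \frac{H'(z)}{1+M_b(z)}\,\omega_a\bigl(H(z)\bigr), \qquad H(z):=z(1+M_b(z)).
\]
Substituting $z\mapsto 1/z$ and using $H(1/z)=1/F_b(z)$, $1+M_b(1/z)=z/F_b(z)$, $H'(1/z)=z^2F_b'(z)/F_b(z)^2$, and $\omega_x(1/z)=z\RC_x(z)$ converts this into the desired formula.

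For the analytic proof, in the model of Example~\ref{ex:cyclicmonotone} with $a=A\otimes P_2$ and $b=I_1\otimes B$ on $H_1\otimes H_2$, I would decompose $H_2=\C\xi_2\oplus\bub{H}_2$ and write $B$ in block form $B=\bigl(\begin{smallmatrix}\beta_{00}&\beta_{01}\\\beta_{10}&\bub{B}\end{smallmatrix}\bigr)$. Under the induced decomposition $H_1\otimes H_2\simeq H_1\oplus(H_1\otimes\bub{H}_2)$, the Schur complement of the lower-right block of $zI-(a+b)$ collapses to $S=F_B(z)I_1-A$, recovering Muraki's formula $G_{a+b}(z)=G_A(F_B(z))$. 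The Banachiewicz formula~\eqref{eq:Banachiewicz} then yields
\[
\Tr_{H_1\otimes H_2}\bigl((zI-a-b)^{-1}\bigr) \;=\; F_B'(z)\,\Cauchy_A(F_B(z)) + \dim(H_1)\,\Cauchy_{\bub{B}}(z),
\]
the corner term $\beta_{01}(z-\bub{B})^{-2}\beta_{10}$ being identified as $F_B'(z)-1$ by differentiating the Schur relation $\beta_{01}(z-\bub{B})^{-1}\beta_{10}=z-\beta_{00}-F_B(z)$. Applying~\eqref{eq:gAt=gA+DlogzG} to $B$ to trade $\Cauchy_{\bub{B}}$ for $\Cauchy_B$ and then passing to the renormalized transform produces the claim, the dimension factors being absorbed via $\RC_b=\dim(H_1)\RC_B$ coming from $b=I_1\otimes B$.

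The main obstacle in either approach is a delicate boundary contribution: algebraically, the shape-BB cyclic merging that produces the defect $zM_b'(z)-M_b(z)$; analytically, the corner term $\beta_{01}(z-\bub{B})^{-2}\beta_{10}$ from the off-diagonal blocks of the Banachiewicz inverse. In both cases the resolution is the same in spirit: the first-order correction is really the derivative of a simple transform of $F_b$, and combining it with the zeroth-order term produces the crucial factor $F_b'(z)$.
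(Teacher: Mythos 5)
Your proposal is correct, and its two halves run parallel to the paper's own two proofs, so let me just record where you differ. The algebraic half is essentially the paper's argument in different bookkeeping: the paper parametrizes the monomials of $(a+b)^n$ by the individual letters $a$ separated by possibly empty $b$-runs, applies the same cyclic-monotone factorization $\omega(b^{q_1}ab^{q_2}\cdots ab^{q_{k+1}})=\varphi(b^{q_1+q_{k+1}})\varphi(b^{q_2})\cdots\varphi(b^{q_k})\omega(a^k)$, and resums directly in $1/z$ using $\sum_{m,n\ge0}\varphi(b^{m+n})z^{-m-n}=-z^{2}G_b'(z)$, which is precisely your $H'$-identity in Laurent form (your defect $zM_b'(z)-M_b(z)$ for the BB shape is the same cyclic merging of the first and last $b$-blocks); your classification by maximal blocks and first/last letter, followed by the substitution $z\mapsto 1/z$, reaches the same telescoping with a few extra steps but no new idea. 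The analytic half is where you genuinely deviate: the paper splits $\hilbH_1\otimes\hilbH_2$ into the three blocks $\C\xi\oplus\bub{\hilbH}_1\oplus(\hilbH_1\otimes\bub{\hilbH}_2)$ and computes the complete block inverse (its operators $L^{-1}$ and $L_2$) before tracing, whereas your coarser splitting $\hilbH_1\oplus(\hilbH_1\otimes\bub{\hilbH}_2)$ makes the Schur complement collapse at once to $F_B(z)I_1-A$, and the trace then needs only the scalar identity $\beta_{01}(z-\bub{B})^{-2}\beta_{10}=F_B'(z)-1$; this is a real shortening, at the price of not exhibiting the full resolvent and the scalar Schur complement $F_A(F_B(z))$ that the paper's finer decomposition displays (though you still read off Muraki's formula $G_{a+b}(z)=G_A(F_B(z))$ from the upper-left block). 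Both variants land on the same identity $\Cauchy_{a+b}(z)=d_1\Cauchy_{\bub{B}}(z)+F_B'(z)\Cauchy_A(F_B(z))$, and your conversion to the statement of Theorem~\ref{thm:CM_convolution} via \eqref{eq:gAt=gA+DlogzG} together with $\RC_b=d_1\RC_B$, $\RC_a=\RC_A$, $F_b=F_B$ is exactly the (implicit) final step of the paper.
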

\begin{proof}[Algebraic proof]
Expand $(a+b)^n$ into 
\begin{align*}
(a+b)^n
&=
b^n + \sum_{\substack{k\geq 1 \\  q_1,  q_2, \dots, q_{k+1} \geq 0 \\  q_1 +  \cdots  + q_{k+1} +k=n}} b^{q_1}a b^{q_2} a \cdots a b^{q_{k+1}}, 
\end{align*}
and applying $\omega$ yields 
\begin{align*}
\omega((a+b)^n) = \omega(b^n) + \sum_{\substack{k\geq 1 \\  q_1,  q_2, \dots, q_{k+1} \geq 0 \\  q_1 +  \cdots  + q_{k+1} +k=n}} \varphi(b^{q_1+q_{k+1}})  \varphi(b^{q_2})\cdots \varphi(b^{q_{k}}) \omega(a^k).
\end{align*}
Multiplying the above identity by $z^{-n-1}$ and taking the summation over $n$ yields 
\begin{align*}
\RC_{a+b}(z) 
&=  \RC_b (z) + \sum_{k\geq1} \sum_{q_1, \dots, q_{k+1} \geq0} \frac{\varphi(b^{q_1+q_{k+1}})}{z^{q_1+q_{k+1}}} \frac{\varphi(b^{q_2})}{z^{q_2}} \cdots \frac{\varphi(b^{q_k})}{z^{q_k}} \frac{\omega(a^k)}{z^{k+1}} \\
&=  \RC_b (z) - z^2G_b'(z) \sum_{k\geq1}[zG_b(z)]^{k-1} \frac{\omega(a^k)}{z^{k+1}} \\
&=  \RC_b (z) - \frac{G_b'(z)}{G_b(z)^2} \sum_{k\geq1} G_b(z)^{k+1} \omega(a^k)  \\
&=  \RC_b (z) +F_b'(z)\RC_a(F_b(z)). 
\end{align*}
Note here that the identity 
$$
\sum_{m \geq 0, n\geq0}^\infty \frac{\varphi(b^{m+n})}{z^{m+n}} = -z^2G_b'(z)
$$
is used above. 
\end{proof}

\begin{proof}[Analytic proof in the setting of Example \ref{ex:cyclicmonotone}: Schur complement approach]
Let $A\in B(\hilbH_1)$ and $B\in B(\hilbH_2)$ be operators with
block decompositions
$$
A =
\begin{bmatrix}
  \alpha&a'\\
  a&\bub{A}
\end{bmatrix}
\qquad \text{and}\qquad 
B =
\begin{bmatrix}
  \beta&b'\\
  b&\bub{B}
\end{bmatrix}
$$
 according to the decomposition $\hilbH_i = \mathbb{C}\xi_i \oplus \bub{\hilbH}_i$ as in \eqref{eq:Ablockdecomp}
and 
let $\sigma_1(A) = A\otimes P_2$ and $\sigma_2(B)=I_1\otimes B$ act
on $\hilbH_1\otimes \hilbH_2\simeq
\C\vac\oplus\bub{\hilbH}_1\oplus(\hilbH_1\otimes\bub{\hilbH}_2)$
according to Example~\ref{ex:cyclicmonotone}, i.e., if we denote by $\eta_1:\bub{\hilbH}_1\to\hilbH_1$ the embedding
and  $\eta_1^*:\hilbH_1\to\bub{\hilbH}_1$ the projection, then
\begin{align*}
  \sigma_1(A)\vac &= \alpha\xi\oplus a\oplus 0
                    & \sigma_2(B)\vac &= \beta\vac\oplus0\oplus(\vac_1\otimes b)\\
  \sigma_1(A)\bub{h}_1 &= (a'\bub{h}_1)\vac\oplus\bub{A}\bub{h}_1\oplus 0
                    & \sigma_2(B)\bub{h}_1 &= 0\oplus\beta\bub{h}_1\oplus(\eta_1(\bub{h}_1)\otimes b)\\
    \sigma_1(A)(h_1\otimes\bub{h}_2) &= 0
                    & \sigma_2(B)(h_1\otimes\bub{h}_2) &= (b'\bub{h}_2)(\vac_1^*h_1)\vac\oplus(b'\bub{h}_2)\eta_1^*(h_1)\oplus(h_1\otimes\bub{B}\bub{h}_2)
\end{align*}
and we obtain the block decompositions
$$
\sigma_1(A) =
\begin{bmatrix}
  \alpha & a' & 0\\
  a&\bub{A}&0\\
  0&0&0
\end{bmatrix}, 
\qquad
\sigma_2(B) =
\begin{bmatrix}
  \beta & 0 &  \vac_1^*\otimes b'\\
  0 &\beta \bub{I}_1& \eta_1^*\otimes b'\\
  \vac_1\otimes b&\eta_1\otimes b&I_1\otimes\bub{B}
\end{bmatrix}
$$
and together
$$
\sigma_1(A) + \sigma_2(B) =
\begin{bmatrix}
  \alpha+  \beta & a' & \vac_1^*\otimes b'\\
 a&\bub{A}+\beta \bub{I}_1& \eta_1^*\otimes b'\\
  \vac_1\otimes b&\eta_1\otimes b&I_1\otimes\bub{B}
\end{bmatrix}. 
$$
We compute the resolvent
\begin{equation}
  \label{eq:z-sigma1A-sigma2B}
(z-\sigma_1(A)-\sigma_2(B))^{-1}
=
\left[
  \begin{array}{c|cc}
    z-\alpha-\beta &- a' & -\vac_1^*\otimes b'\\
    \midrule
    -a&(z-\beta) \bub{I}_1-\bub{A}& -\eta_1^*\otimes b'\\
    -\vac_1\otimes b&-\eta_1\otimes b&I_1\otimes(z\bub{I}_2-\bub{B})
  \end{array}
\right]^{-1}
\end{equation}
via the Schur complement.
To this end we first compute the lower resolvent
\begin{equation}
  \label{eq:lowerresolvent}
  L^{-1}=
\begin{bmatrix}
      (z-\beta) \bub{I}_1-\bub{A}& -\eta_1^*\otimes b'\\
    -\eta_1\otimes b&I_1\otimes(z\bub{I}_2-\bub{B})
  \end{bmatrix}^{-1}
\end{equation}
on $\bub{\hilbH}_1\oplus(\hilbH_1\otimes \bub{\hilbH}_2)$.
The corresponding Schur complement of $L$ is
\begin{align*}
  S_L &= (z-\beta)\bub{I}_1-\bub{A} - (\eta_1^*\otimes
      b')(I_1\otimes(z\bub{I}_2-\bub{B})^{-1})(\eta_1\otimes b)\\
    &= (z-\beta)\bub{I}_1-\bub{A} - \eta_1^*I_1\eta_1\otimes
      b'(z\bub{I}_2-\bub{B})^{-1}b\\
      &= F_B(z)\bub{I}_1-\bub{A}
        .
\end{align*}
If we denote by
$\resolv_{\bub{A}}(F_B(z))=(F_B(z)\bub{I}_1-\bub{A})^{-1}$
and $\resolv_{\bub{B}}(z)=(z\bub{I}_2-\bub{B})^{-1}$
the resolvents of $\bub{A}$ and $\bub{B}$, respectively,
then with the help of Banachiewicz' formula  \eqref{eq:Banachiewicz}
the resolvent  \eqref{eq:lowerresolvent} can be written as
\begin{align*}
  L^{-1}
  &=
 \begin{bmatrix}
   S_L^{-1} & S_L^{-1}(\eta_1^*\otimes b')(I_1\otimes \resolv_{\bub{B}}(z)  )  \\
    I_1\otimes\resolv_{\bub{B}}(z)(\eta_1\otimes b)S_L^{-1} &
    I_1\otimes \resolv_{\bub{B}}(z)+
    (I_1\otimes\resolv_{\bub{B}}(z))(\eta_1\otimes b)S_L^{-1}(\eta_1^*\otimes b')(I_1\otimes\resolv_{\bub{B}}(z))
  \end{bmatrix}
  \\
  &=
    \begin{bmatrix}
    \resolv_{\bub{A}}(F_B(z))
    & \resolv_{\bub{A}}(F_B(z))\eta_1^*\otimes b'\resolv_{\bub{B}}(z)\\
    \eta_1\resolv_{\bub{A}}(F_B(z))\otimes\resolv_{\bub{B}}(z)b
    & I_1\otimes\resolv_{\bub{B}}(z)
      +\eta_1\resolv_{\bub{A}}(F_B(z))\eta_1^*\otimes \resolv_{\bub{B}}(z)bb'\resolv_{\bub{B}}(z)
  \end{bmatrix}. 
\end{align*}
Now we plug $L^{-1}$  into Banachiewicz' formula \eqref{eq:Banachiewicz} for \eqref{eq:z-sigma1A-sigma2B}:
$$
(z-\sigma_1(A)-\sigma_2(B))^{-1} = 
\left[
  \begin{array}{c|c}
    S^{-1} & S^{-1}
             \begin{bmatrix}
               a'&\vac_1^*\otimes b'
             \end{bmatrix}L^{-1}\\
    \midrule
    L^{-1}
    \begin{bmatrix}
      a\\ \vac_1\otimes b
    \end{bmatrix}
    S^{-1}
    &L^{-1}+L^{-1}
    \begin{bmatrix}
      a\\ \vac_1\otimes b
    \end{bmatrix}
    S^{-1}
    \begin{bmatrix}
      a'&\vac_1^*\otimes b'
    \end{bmatrix}L^{-1}
  \end{array}
\right]. 
$$
After some cancellations the Schur complement evaluates to
\begin{align*}
  S&=F_{\sigma_1(A)+\sigma_2(B)}(z)\\
  &= z -\alpha-\beta -
    \begin{bmatrix}
-a' & -\vac_1^*\otimes b'
\end{bmatrix}
       L^{-1}
  \begin{bmatrix}
    -a\\ -\vac_1\otimes b
  \end{bmatrix}
  \\
  &= z-\alpha-\beta-a'(F_B(z)\bub{I}_1-\bub{A})^{-1}a
    -\vac_1^*\vac_1b'(z\bub{I}_2-\bub{B})^{-1}b\\
  &= F_B(z)  - \alpha-a'(F_B(z)\bub{I}_1-\bub{A})^{-1}a
  \\
  &= F_A(F_B(z))
\end{align*}
where we used  the Schur complement representation
\eqref{eq:GA=banach}.
Finally the resolvent is
\begin{multline*}
   (z-\sigma_1(A)-\sigma_2(B))^{-1}\\
  = 
  G_A(F_B(z))\left[
  \begin{array}{c|c}
    1 & a'\resolv_{\bub{A}}(F_B(z)) \qquad [a'\resolv_{\bub{A}}(F_B(z))\eta_1^*+\vac_1^*]\otimes b'\resolv_{\bub{B}}(z)\\
    \midrule
    \begin{matrix}
    \resolv_{\bub{A}}(F_B(z))a \\
    [\eta_1\resolv_{\bub{A}}(F_B(z))a+\vac_1]\otimes\resolv_{\bub{B}}(z)b
  \end{matrix}
    & 
      F_A(F_B(z)) L^{-1} +  L_2
  \end{array}
\right]
\end{multline*}
where
$$
 L_2 = \begin{bmatrix}
    T &
    T\eta_1^* +\resolv_{\bub{A}}(F_B(z))a\vac_1^*\otimes b'\resolv_{\bub{B}}(z)
    \\
    \eta_1T+\vac_1a'\resolv_{\bub{A}}(F_B(z))\otimes \resolv_{\bub{B}}(z)b
    & 
    \bigl(
    \eta_1T\eta_1^* + \eta_1\resolv_{\bub{A}}(F_B(z))a\vac_1^* +
    \vac_1a'\resolv_{\bub{A}}(F_B(z))\eta_1^* + \vac_1\vac_1^*
    \bigr)\otimes\resolv_{\bub{B}}(z)
  \end{bmatrix}
$$
with
$$
T=    (F_B(z) -\bub{A})^{-1}aa'(F_B(z) -\bub{A})^{-1}
.
$$
Finally the trace of the resolvent evaluates to
\begin{align*}
 \Cauchy_{\sigma_1(A)+\sigma_2(B)}(z) 
   &= G_A(F_B(z)) + \Tr(L^{-1}) + G_A(F_B(z))\Tr(L_2)\\
   &= G_A(F_B(z)) + \Cauchy_{\bub{A}}(F_B(z)) +
     \Tr(I_1)\Cauchy_{\bub{B}}(z) + \Tr[\eta_1(F_B(z)-\bub{A})^{-1}\eta_1^*]   \\&  \phantom{==}
      +\Tr[(z-\bub{B})^{-1}bb'(z-\bub{B})^{-1}]   + G_A(F_B(z))
        \Tr[T (F_B(z)-\bub{A})^{-1}aa'(F_B(z)-\bub{A})^{-1}] \\ 
    &\phantom{==}  +G_A(F_B(z)) \Tr(\eta_1T \eta_1^* +\vac_1\vac_1^*)
          \Tr[(z-\bub{B})^{-1}bb'(z-\bub{B})^{-1}]
  \\
  &= G_A(F_B(z)) + \Cauchy_{\bub{A}}(F_B(z)) + d_1 \Cauchy_{\bub{B}}(z) +
    \Cauchy_{\bub{A}}(F_B(z))(F_B'(z)-1)
    \\&\phantom{==}
        + G_A(F_B(z))[F_A'(F_B(z))-1 + F_A'(F_B(z))(F_B'(z)-1)]
  \\
  &= d_1\Cauchy_{\bub{B}}(z) + \Cauchy_A(F_B(z))F_B'(z). 
\end{align*}
\end{proof}

We note here that another equivalent convolution formula can be given in terms of
 \[
 \KK_a(z) = -\sum_{n\geq1} \frac{\omega(a^n)}{ n z^n}.
 \]
The convolution formula in Theorem \ref{thm:CM_convolution} then reads
$
\KK_{a+b}' =  \KK_b' + (\KK_a \circ F_b)'
$
and hence 
\begin{equation*}
\KK_{a+b}(z) =  \KK_b(z) + \KK_a (F_b(z)). 
\end{equation*}

\subsection{Limit theorem}

In the setting of Example \ref{ex:cyclicmonotone}, let $H_i$ be the same Hilbert space $K$ with $d = \dim(K) \in \{2,3,4,\dots \}$ and with a distinguished unit vector $\xi$ and let $a^{(i)}=\sigma_i(a)$ for some $a \in B(K)_{\rm sa}$. Let $\omega$ be the trace on $H$, $\Tr$ be the trace on $K$ and $\psi$ be the vector state on $K$ determined by $\xi$.  

Our main object in this section is the sum
\begin{equation}\label{eq:iid_sum}
b_N = a^{(1)} + \dots + a^{(N)}. 
\end{equation}  
In order to see the convergence of trace moments of $b_N$ we start from some examples. 
Since $\Tr(I_K)=d$, we obtain 
$$
\omega(b_N) = \Tr(a) + d \Tr(a) + \cdots + d^{N-1} \Tr(a) = [N]_d \Tr(a), 
$$
where $[N]_d=1+d+d^2+\dots +d^{N-1}$, and 
\begin{align*}
\omega(b_N^2) 
&= \sum_{i<j} \omega(a^{(i)}a^{(j)}) + \sum_{i>j} \omega(a^{(i)}a^{(j)}) + \sum_{i} \omega((a^{(i)})^2) \\
&= \sum_{i<j} \omega(a^{(i)})\varphi(a^{(j)}) + \sum_{i>j} \varphi(a^{(i)} ) \omega(a^{(j)}) + [N]_d\Tr(a^2) \\
&= \sum_{j=2}^N \frac{d^{j-1}-1}{d-1} \Tr(a) \psi(a) + \sum_{i=2}^N \frac{d^{i-1}-1}{d-1} \Tr(a) \psi(a) + [N]_d\Tr(a^2) \\
&=  \frac{2}{d-1}\left[\frac{d(d^{N-1}-1)}{d-1}-(N-1) \right] \Tr(a) \psi(a) + [N]_d\Tr(a^2) \\ 
&=  \frac{2}{d-1}([N]_d -N ) \Tr(a) \psi(a) + [N]_d\Tr(a^2).  
\end{align*}

A similar computation yields that 
\begin{align*}
\omega(b_N^3) 
&= 6 \left[  \frac{[N]_d -N}{(d-1)^2} - \frac{N(N-1)}{2(d-1)}\right] \Tr(a)\psi(a)^2 +  \frac{3}{d-1}([N]_d -N ) \Tr(a^2) \psi(a) \\
&\quad +  \frac{3}{d-1}([N]_d -N ) \Tr(a) \psi(a^2)+ [N]_d \Tr(a^3).  
\end{align*}
Therefore, the normalized traces converge as $N\to \infty$ without rescaling of $b_N$: 
\begin{align*}
d^{-N}\omega(b_N) 
&\to \frac{\Tr(a)}{d-1}, \\
d^{-N}\omega(b_N^2) 
&\to \frac{2}{(d-1)^2}  \Tr(a) \psi(a) + \frac{1}{d-1}\Tr(a^2), \\
d^{-N}\omega(b_N^3) 
&\to  \frac{6}{(d-1)^3} \Tr(a)\psi(a)^2  +  \frac{3}{(d-1)^2} \Tr(a^2) \psi(a) +  \frac{3}{(d-1)^2} \Tr(a) \psi(a^2) +  \frac{1}{d-1} \Tr(a^3). 
\end{align*}

In order to describe the general situation, we need some concepts on ordered  set partitions. 

\begin{definition} Let $k \in \N$. 
\begin{enumerate}[\rm(i)]
\item An ordered set partition of $[k]$ is a tuple $\pi=(B_1,B_2,\dots, B_p)$ of subsets of $[k]$ such that $\{B_1,\dots, B_p\}$ is a set partition of $[k]$; that is, $B_1, \dots, B_p$ are non-empty and mutually disjoint subsets of $[k]$, and their union is $[k]$. The length $p$ of $\pi$ is denoted by $|\pi|$. The set of the ordered set partitions of $[k]$ is denoted by $\OP(k)$. 

\item For a tuple $\mathbf i =(i_1, \dots, i_k) \in \N^k$, the ordered kernel set partition $\ker(\mathbf i) \in \OP(k)$ is defined as follows: first, pick the smallest value $p_1$  among $i_1,\dots, i_k$ and then define the subset $B_1=\{ j \in [k]: i_j=p_1\}$; 
secondly, pick the second smallest value $p_2$ among $i_1,\dots, i_k$ and define the subset $B_2=\{j \in [k]: i_j=p_2\}$; continuing this procedure until the end we arrive at an ordered set partition $(B_1,B_2,\dots)$, which is denoted by $\ker(\mathbf i)$. 

\end{enumerate}
\end{definition}

\begin{example}
\begin{align*}\label{eq:osp}
\ker(6,3,2,3,6) &= (\{3\},\{2,4\},\{1,5\}), \\
\ker(2,7,4,7,4,2,4)&=(\{1,6\},\{3,5,7\},\{2,4\}). 
\end{align*}
For further information on ordered (kernel) set partitions  the reader is referred to \cite{HasebeLehner:cumulants5}. 

\end{example} 
 For an ordered set partition $\pi$ of $[k]$ there exists a unique \emph{packed
   word}, i.e., a tuple $\mathbf i(\pi)= (i_1(\pi),\dots, i_k(\pi)) \in [|\pi|]^k$ such that $\pi = \ker(\mathbf i(\pi))$. Using this tuple we define $\omega(\pi)$ to be $\omega(a^{(i_1(\pi))} \cdots a^{(i_k(\pi))})$. 
 
\begin{example} 
 If  $\pi=(\{1,3\},\{2\})$ then $\mathbf i(\pi)=(1,2,1)$ and 
\[
\omega(\pi) = \omega(a^{(1)}a^{(2)}a^{(1)})=\varphi(a)\Tr(a^2).
\] 
 If $\pi=(\{3\},\{2,4,6\},\{1,5\})$ then $\mathbf i (\pi) = (3,2,1,2,3,2)$ and 
 \[
\omega(\pi) = \omega(a^{(3)}a^{(2)}a^{(1)}a^{(2)}a^{(3)}a^{(2)})=\varphi(a)\varphi(a) \varphi(a^3)\Tr(a).
\] 
 \end{example}
With those notions, we have
\begin{align*}
\omega(b_N^k) 
&= \sum_{\pi \in \OP(k)} \sum_{\substack{\mathbf i=(i_1\dots, i_k) \in [N]^k\\ \ker(\mathbf i) = \pi}} \omega(a^{(i_1)} \cdots a^{(i_k)}) \\
&=  \sum_{\pi \in \OP(k)} \sum_{\substack{\mathbf i=(i_1\dots, i_k) \in [N]^k\\ \ker(\mathbf i) = \pi}} d^{\min\{i_1,\dots, i_k\}-1} \omega(\pi) \\
&=  \sum_{\pi \in \OP(k)} \alpha_{|\pi|}(d,N) \omega(\pi),  
\end{align*}
where 
\[\alpha_p(d,N):= \sum_{\substack{(j_1,\dots, j_p) \in [N]^p \\ j_1 < \cdots <j_p}} d^{j_1-1},  \quad N \ge p;  \qquad \alpha_p(d,N) :=0, \quad 0\le N<p.  
\]

In order to investigate the asymptotics of $\omega(b_N^k)$ it suffices to understand the function $\alpha_{k}(d, N)$. 
\begin{lemma}\label{lem:alpha}
For each $k \in \mathbb N$ there exists a polynomial $P_k$ in two variables such that 
\[
\alpha_k(d,N) = \frac{d^N}{(d-1)^k} + P_k((d-1)^{-1},N), \qquad d\geq 2, N \geq0.  
\]
\end{lemma}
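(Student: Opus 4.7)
My plan is to induct on $k$, exploiting the simple combinatorial recurrence
\[
\alpha_k(d, N+1) - \alpha_k(d, N) = \alpha_{k-1}(d, N), \qquad k, N \geq 1,
\]
which is obtained by splitting the tuples $(j_1,\dots,j_k)$ counted by $\alpha_k(d,N+1)$ according to whether the maximal index $j_k$ equals $N+1$ or lies in $[N]$: the former contribute $\sum_{j_1<\cdots<j_{k-1}\leq N} d^{j_1-1} = \alpha_{k-1}(d,N)$ while the latter contribute $\alpha_k(d,N)$. The base case $k=1$ is the finite geometric sum
\[
\alpha_1(d,N) = \sum_{j=1}^N d^{j-1} = \frac{d^N}{d-1} - \frac{1}{d-1},
\]
so that $P_1(x,y) = -x$.

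For the inductive step, assume $\alpha_{k-1}(d,N) = d^N/(d-1)^{k-1} + P_{k-1}((d-1)^{-1}, N)$. Since
\[
\frac{d^{N+1}}{(d-1)^k} - \frac{d^N}{(d-1)^k} = \frac{d^N(d-1)}{(d-1)^k} = \frac{d^N}{(d-1)^{k-1}},
\]
the exponential term in our ansatz already reproduces the exponential part of the recurrence. It therefore remains to find a polynomial $Q_k(x,y) \in \mathbb{Q}[x,y]$ with
\[
Q_k(x, N+1) - Q_k(x, N) = P_{k-1}(x, N).
\]
Writing $P_{k-1}(x, N) = \sum_j a_j(x)\,N^j$ with $a_j(x) \in \mathbb{Q}[x]$, such a $Q_k$ is furnished by the standard polynomial antidifference operator acting in the $N$-variable, which maps each monomial $N^j$ to a polynomial of degree $j+1$ in $N$ (e.g.\ via Bernoulli polynomials). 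The coefficients in $x$ are untouched, so $Q_k \in \mathbb{Q}[x,y]$.

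Consequently $\tilde\alpha_k(d,N) := d^N/(d-1)^k + Q_k((d-1)^{-1}, N)$ satisfies the same first-order recurrence as $\alpha_k(d,N)$, so their difference is a function $C_k$ of $d$ alone. The initial condition $\alpha_k(d,0) = 0$ determines
\[
C_k((d-1)^{-1}) = -\frac{1}{(d-1)^k} - Q_k((d-1)^{-1}, 0),
\]
which is itself a polynomial in $(d-1)^{-1}$. Setting $P_k(x,y) := Q_k(x,y) - x^k - Q_k(x,0)$ completes the induction. I anticipate no essential obstacle: the only point to verify is that the discrete antidifference preserves polynomiality in $(x,N)$ jointly, which is automatic because it acts only on the $N$-variable while treating the coefficients $a_j(x)$ as scalars.
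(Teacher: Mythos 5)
Your proof is correct and follows essentially the same route as the paper: induction on $k$ using the decomposition over the largest index $j_k$ (the paper writes it as the summed form $\alpha_k(d,N)=\sum_{j=1}^N\alpha_{k-1}(d,j-1)$ rather than your first-order difference), with the polynomial part handled by Faulhaber/Bernoulli antidifferencing and the constant fixed by the geometric sum, respectively by the initial condition $\alpha_k(d,0)=0$. The two formulations are trivially equivalent, so no further comment is needed.
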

\begin{proof} 
The proof goes by induction on $k$. For $k=1$,  $\alpha_1(d,N)=(d^N-1)/(d-1)$, and hence $P_1(x,y) = -x$. 
For general $k\ge2$ we proceed as 
\begin{align*}
\alpha_k(d,N)
&= \sum_{j=1}^N\sum_{\substack{(j_1,\dots, j_{k-1}) \in [j-1]^{k-1} \\  j_1 < \cdots < j_{k-1} }} d^{j_1-1} = \sum_{j=1}^N \alpha_{k-1}(d,j-1) \notag \\
&=  \sum_{j=1}^N \left[\frac{d^{j-1}}{(d-1)^{k-1}} + P_{k-1}((d-1)^{-1},j-1)\right] \\
&=\frac{d^N}{(d-1)^{k}} - \frac{1}{(d-1)^{k}}+ \sum_{j=1}^N P_{k-1}((d-1)^{-1},j-1), \qquad \qquad N\ge 1. 
\end{align*}
By Faulhaber's formula and induction hypothesis, there is a polynomial $Q_k(x,y)$ such that $Q_k(x,0)=0$ and 
\[
Q_k(x, N) = \sum_{j=1}^N P_{k-1}(x,j-1), \qquad N\ge 1, 
\]
which implies the desired formula for $N\ge 1$ by taking $P_k(x,y)=-x^k +Q_k(x,y)$. Since $Q_k(x,0)=0$ the formula holds for $N=0$ as well.  
\end{proof}

By Lemma \ref{lem:alpha} we obtain the limit 
\begin{equation*}
\lim_{N\to\infty} d^{-N} \alpha_k(d,N) = \frac{1}{(d-1)^k}
\end{equation*}
and conclude the following. 
\begin{theorem} In the setting above we have
\begin{equation}\label{eq:limit}
\lim_{N\to\infty} d^{-N}\omega(b_N^k)  =  \sum_{\pi \in \OP(k)} \frac{\omega(\pi)}{(d-1)^{|\pi|}}. 
\end{equation}
\end{theorem}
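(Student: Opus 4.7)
The proof plan is essentially a direct assembly of the two identities already derived just before the theorem statement. First I would write down the finite expansion
\[
\omega(b_N^k) = \sum_{\pi \in \OP(k)} \alpha_{|\pi|}(d,N)\, \omega(\pi),
\]
which the excerpt has already established by grouping the $N^k$ tuples $(i_1,\dots,i_k)\in [N]^k$ according to their ordered kernel partition and using the identity $\omega(a^{(i_1)}\cdots a^{(i_k)}) = d^{\min\{i_1,\dots,i_k\}-1}\omega(\pi)$ when $\ker(\mathbf i)=\pi$.

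Next I would substitute Lemma~\ref{lem:alpha} into each summand, obtaining
\[
d^{-N}\omega(b_N^k) = \sum_{\pi \in \OP(k)} \frac{\omega(\pi)}{(d-1)^{|\pi|}} + d^{-N}\sum_{\pi\in\OP(k)} P_{|\pi|}((d-1)^{-1},N)\,\omega(\pi).
\]
Since $\OP(k)$ is finite and each $P_{|\pi|}((d-1)^{-1},N)$ is a polynomial in $N$ (with $d\geq 2$ fixed), the second sum is bounded in absolute value by a polynomial in $N$; dividing by $d^{N}$ and letting $N\to\infty$ drives the remainder term to zero. Interchanging the (finite) sum with the limit then yields the stated formula.

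There is essentially no obstacle here: all the hard work has been done in establishing the expansion for $\omega(b_N^k)$ via cyclic-monotone independence and in proving the explicit form of $\alpha_k(d,N)$ in Lemma~\ref{lem:alpha}. The only point worth being careful about is verifying that the error polynomial $P_k$ truly grows only polynomially in $N$, which is built into the statement of the lemma itself. Thus the proof reduces to one line after quoting the two preceding results.
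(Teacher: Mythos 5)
Your proposal is correct and follows the same route as the paper: the paper likewise combines the already-established expansion $\omega(b_N^k)=\sum_{\pi\in\OP(k)}\alpha_{|\pi|}(d,N)\,\omega(\pi)$ with Lemma~\ref{lem:alpha}, noting that $\lim_{N\to\infty}d^{-N}\alpha_k(d,N)=(d-1)^{-k}$ and passing the limit through the finite sum over $\OP(k)$. Your explicit remark that the polynomial remainder $P_{|\pi|}((d-1)^{-1},N)$ is killed by $d^{-N}$ (for fixed $d\geq 2$) is exactly the implicit step the paper leaves to the reader.
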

Thus the empirical eigenvalue distributions of $b_N$ converge (in the sense of
moments) to a probability measure whose $k$-th moment is the above limit. Of
course the empirical eigenvalue distributions of the rescaled sum $N^{-1/2}b_N$
converge weakly to $\delta_0$, which means that the number of eigenvalues of
$N^{-1/2}b_N$ outside a fixed neighborhood of $0$ is of the order
$o(d^N)$. Combining this with the monotone CLT, which asserts that vacuum spectral distribution of $N^{-1/2} b_N$ weakly converges an arcsine distribution, it turns out that the vacuum vector captures a relatively small number of eigenvalues of $N^{-1/2}b_N$ that lie outside the neighborhood of 0.

The limit moments \eqref{eq:limit} depend on a lot of information about trace
and vacuum moments of the original matrix $a$. This is in sharp contrast with the fact that if $\psi(a)=0$ and $\psi(a^2)=1$ then the distribution of the rescaled sum $N^{-1/2}b_N$  with respect to the vacuum state $\varphi$ converges weakly to the same arcsine law. 

We come back to the original model of comb product graphs in Section \ref{sec:combproduct} (cf: Example \ref{ex:cyclicmonotone}), and compute the limit empirical eigenvalue distribution of the adjacency matrix of the iterated comb product of the complete graph $\bK_2$. Even for this simplest graph, the limit moments \eqref{eq:limit} are not explicit; they only satisfy a recurrence relation. Fortunately, we can describe the limit distribution with the help of work of Smyth \cite{Smy80}, who defined a  distribution function $L_+\colon [0,\infty)\to [0,1)$ (denoted as $F$ therein) characterized by the property that $L_+$ is strictly increasing, $L_+(0)=0$ and 
$$
|2L_+(x)-1|=L_+(|x-x^{-1}|),\qquad x>0. 
$$
 Let $\lambda_+$ be the distribution associated with $L_+$ and $\lambda$ be the symmetrization of $\lambda_+$. It is known that $L_+$ is continuous and hence $\lambda$ has no atoms. 

\begin{theorem}
Let $A_N$ be the adjacency matrix of the $N$-fold comb product of $(\bK_2,o)$ with itself. Then the empirical eigenvalue distribution of $A_N$ converges weakly to $\lambda$ as $N\to\infty$.  
\end{theorem}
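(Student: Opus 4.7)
The plan is to combine Schwenk's formula (Theorem~\ref{thm:Schwenk}) with a contraction argument on cumulative distribution functions. Iterating Schwenk's formula with $\phi_{\bK_2\setminus o}(x)=x$ and $F_{\bK_2}(z)=z-1/z$ gives
\[
\phi_{\bG_N}(x) \;=\; x^{2^{N-1}}\phi_{\bG_{N-1}}\!\bigl(x-\tfrac{1}{x}\bigr)\;=\;\prod_{\lambda\in\spec(\bG_{N-1})}\!(x^2-\lambda x-1),
\]
the second equality by factoring $\phi_{\bG_{N-1}}$ and clearing the $x^{-2^{N-1}}$ denominator. Hence $\spec(\bG_N)=F^{-1}(\spec(\bG_{N-1}))$ with $F(x)=x-1/x$, each $\lambda\in\spec(\bG_{N-1})$ lifting to two simple roots $r,-1/r$ of $x^2-\lambda x-1$. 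Consequently the empirical measure $\mu_N$ satisfies the pushforward relation $F_{\ast}\mu_N=\mu_{N-1}$, and induction from $\spec(\bG_1)=\{\pm 1\}$ shows that $\mu_N$ is symmetric under $r\mapsto -r$ and under $r\mapsto 1/r$; in particular $1\notin\spec(\bG_N)$ for $N\geq 2$, since $F(1)=0$ is never in the spectrum.

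\textbf{CDF recursion and Smyth's fixed point.} Set $\nu_N:=2\mu_N|_{(0,\infty)}$ with CDF $L_N$. Unfolding $F_{\ast}\mu_N=\mu_{N-1}$ on symmetric intervals $[-y,y]$ together with the two symmetries reduces the whole problem to the positive axis: for $y\geq 0$ and $r=r_+(y):=(y+\sqrt{y^2+4})/2\geq 1$,
\[
L_{N-1}(y)\;=\;2L_N(r)-1,\qquad\text{equivalently}\qquad L_N(r)\;=\;\tfrac12\bigl(1+L_{N-1}(r-\tfrac1r)\bigr)\ \text{for }r\geq 1,
\]
and $L_N$ is determined on $(0,1)$ by the (approximate) inversion identity $L_N(r)+L_N(1/r)\approx 1$, the defect being an atom of mass at most $2^{1-N}$. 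Smyth's functional equation $|2L_+(x)-1|=L_+(|x-1/x|)$ restricted to $x>1$ reads $L_+(x-1/x)=2L_+(x)-1$, so $L_+$ satisfies the same recursion; evaluating Smyth's equation at $x$ and at $1/x$ and comparing yields the \emph{exact} inversion $L_+(r)+L_+(1/r)=1$.

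\textbf{Contraction and conclusion.} Subtracting the two recursions gives the one-step estimate $\sup_{r\geq 1}|L_N(r)-L_+(r)|\leq \tfrac12\sup_{s\geq 0}|L_{N-1}(s)-L_+(s)|$. Propagating the bound from $r\geq 1$ to $r\in(0,1)$ via inversion loses at most the atom defect $2^{1-N}$, yielding
\[
\|L_N-L_+\|_\infty\;\leq\; \tfrac12\,\|L_{N-1}-L_+\|_\infty\,+\,2^{1-N},
\]
which iterates to $\|L_N-L_+\|_\infty=O(N\cdot 2^{-N})\to 0$. Since $L_+$ is continuous, uniform convergence forces weak convergence $\nu_N\to\lambda_+$, and the symmetry of $\mu_N$ then implies $\mu_N\to\lambda$ weakly. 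The main technical point will be the bookkeeping around atoms: the exact inversion $L_N(r)+L_N(1/r)=1$ fails by at most $2^{1-N}$, of the same order as the residual from the contraction, so the final estimate is unaffected; alternatively one can work with the symmetrized CDF $L_N^{(s)}(r)=\tfrac12(L_N(r)+L_N(r^-))$, which satisfies the exact inversion and differs from $L_N$ by at most $2^{-N}$ in sup norm.
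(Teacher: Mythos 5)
Your proposal is correct, but it takes a genuinely different route from the paper's proof. The paper stays inside the cyclic-monotone framework: it specializes the limit-moment formula \eqref{eq:limit} to $\bK_2$, counts ordered set partitions in $\OP(2n)$ whose blocks have even maximal arcs to derive the recursion \eqref{eq:recursion} for $\beta_n$, identifies these numbers with the even moments of $\lambda$ via Smyth's moment recurrence \cite[Theorem 6]{Smy84}, and finally needs Carleman's condition (Lemma \ref{lem_carleman}) to upgrade moment convergence to weak convergence. You instead exploit the exact spectral self-similarity coming from Schwenk's formula (Theorem \ref{thm:Schwenk}): since $\phi_{\bG_N}(x)=\prod_{\lambda\in\spec(\bG_{N-1})}(x^2-\lambda x-1)$, the spectrum of $\bG_N$ is the full preimage $F^{-1}(\spec(\bG_{N-1}))$ under $F(x)=x-1/x$ with multiplicities preserved, so the empirical measures obey an exact pullback recursion; comparing the resulting CDF recursion on $[1,\infty)$ with Smyth's defining functional equation from \cite{Smy80} (using $L_+(1)=\tfrac12$ and the exact inversion $L_+(r)+L_+(1/r)=1$) gives the contraction $\lVert L_N-L_+\rVert_\infty\le\tfrac12\lVert L_{N-1}-L_+\rVert_\infty+2^{1-N}$, hence uniform convergence at rate $O(N2^{-N})$, and weak convergence follows with no determinacy argument because $L_+$ is a continuous proper distribution function. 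The details you leave open are routine: the $2^{1-N}$ atom bound follows from $\mu_N(\{s\})=\tfrac12\mu_{N-1}(\{F(s)\})$ by induction (equivalently, the spectrum is simple for every $N$), the negation and inversion invariances propagate by the same induction, and the left-limit bookkeeping is exactly as you indicate. As for what each approach buys: the paper's computation is there to illustrate the cyclic-monotone limit theorem and its ordered-set-partition combinatorics, which in principle apply to iterated comb products of arbitrary rooted graphs; your argument is shorter and more elementary, avoids both the arc-counting and Carleman's condition, needs only the functional-equation characterization of $L_+$ rather than the moment recurrence of \cite{Smy84}, and even yields a quantitative Kolmogorov-distance bound, at the price of leaning on the exact polynomial recursion special to iterating a fixed rooted graph.
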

\begin{proof} In the notation of this section, we are dealing with 
\[K=\C^2, \qquad a = \begin{pmatrix} 0 & 1 \\ 1 & 0 \end{pmatrix}, \quad {\rm and} \quad \xi = \begin{pmatrix} 1 \\ 0 \end{pmatrix}.
\]
 As already verified, the limiting $p$-th moment of $b_N$ is described by 
\begin{equation}\label{eq:moment}
 \sum_{\pi \in \OP(p)}\omega(\pi),  
\end{equation}
where $\omega(\pi)$ in \eqref{eq:limit} is determined by the sequence
$\{\omega(a^p)\}_{p\geq0} = \{2,0,2,0,\dots\} = \{2 \psi(a^p)\}_{p\geq0}$. It
is easy to see that $\omega(\pi)=0$ for all $\pi \in \OP(p)$ if $p$ is odd, and
hence all odd moments vanish. We will compute the numbers 
\begin{equation*}
\gamma_{n,k}:= \sum_{\pi \in \OP(2n), |\pi|=k}\omega(\pi). 
\end{equation*}
The very definition of $\omega(\pi)$ shows that $\gamma_{n,1}=2$ and $\omega(\pi)$ is either 0 or 2.  
Below we identify $[p]$ with $\Z_p$ regarded as points on a circle.
Let $p \geq2$, then a \emph{maximal arc} in a subset $B \subset \Z_p$ is a
maximal cyclic interval $I\subseteq \Z_p$  contained in $B$.

Any subset $B \subset \Z_p$ is a union of maximal arcs of $B$ in $\Z_p$ (see Fig.\ \ref{fig1}). 
This notion is important since for $\pi=(B_1,\dots, B_k) \in \OP(2n)$ the factorization 
\begin{equation}\label{rec}
\omega(\pi) = \omega(\pi|_{[2n]\setminus B_k}) \prod_{I: \text{maximal arc  of $B_k$ in $\Z_{2n}$}} \psi(a^{\# I}) 
\end{equation}
holds. 
Observe from the repeated use of \eqref{rec} that $\omega(\pi)=2$ if and only if
\begin{enumerate}[(a)]
\item\label{item:maximal} each maximal arc of $B_k$ in $[2n] \simeq \Z_{2n}$ has even size, 
\item each maximal arc of $B_{i}$ in $[2n] \setminus (B_{i+1} \cup \cdots \cup B_k) \simeq \Z_{2n - \sum_{j=i+1}^{k}\# B_{j}}$ has even size for all $i=1,2,\dots, k-1$. 
\end{enumerate}
 Note that these conditions imply that all $B_i$ have even size. Moreover, since $B_i$ are not empty, we must have $2 \leq \# B_i \leq 2n- 2k+2$ for all $i \in[k]$.

To find a recursive formula for $\gamma_{n,k}$, we count the number $\delta_{n,m}$ of all subsets $B_k \subset [2n]$ satisfying \eqref{item:maximal} and with $\#B_k=2m$ for each $1\leq k \leq n$ and $1 \leq m \leq n-k+1$. As in Fig.\ \ref{fig1}, two neighboring elements $\bullet$ of $B_k$ can be joined to a single element $\star$ and the elements on the circle can be cut between $1$ and $2n$ and be opened to a line, so the problem comes to counting the number of arranging $m$ elements $\star$ and $2n-2m$ elements $\circ$ on one line; however, subsets like $B_k=\{1,4,5,8\}$ does not correspond to such a line arrangement, so we adjust such a case by rotating the circle to the left as in Fig.\ \ref{fig2}. Therefore, a line arrangement of $m$ elements $\star$ and $2n-2m$ elements $\circ$ corresponds to a single $B_k$ if it ends with $\circ$, while it corresponds to two $B_k$'s if it ends with $\star$. Altogether, we arrive at
$$
\delta_{n,m} =  \binom{m+2n-2m-1}{m} +2\binom{m+2n-2m-1}{m-1} = \binom{2n-m}{m} \frac{2n}{2n-m}. 
$$
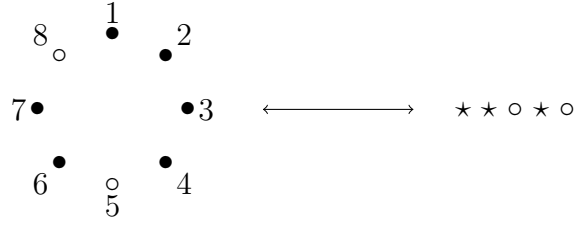
\begin{figure}[bpt]
\begin{center}
\begin{tikzpicture}[scale = 1]
\node at (1,0) [rectangle]  {$\bullet$};
\node at (1,0) [right]  {$3$};
\node at (0.707,0.707) [rectangle]  {$\bullet$};  
\node at (0.707,0.707) [above right]  {$2$};  
\node at (0,1) [rectangle]  {$\bullet$};
\node at (0,1) [above]  {$1$};
\node at (-0.707,0.707) [rectangle]  {$\circ$};  
\node at (-0.707,0.707) [above left]  {$8$};  
\node at (-1,0) [rectangle]  {$\bullet$};
\node at (-1,0) [left]  {$7$};
\node at (-0.707,-0.707) [rectangle]  {$\bullet$};  
\node at (-0.707,-0.707) [below left]  {$6$};  
\node at (0,-1) [rectangle]  {$\circ$}; 
\node at (0,-1) [below]  {$5$};  
\node at (0.707,-0.707) [rectangle]  {$\bullet$};  
\node at (0.707,-0.707) [below right]  {$4$};
---------------------------
\draw[<->] (2,0) -- (4,0) node[right] {$\quad\star$ $\star$ $\circ$ $\star$ $\circ$}; 
\end{tikzpicture}
\caption{$B_k=\{1,2,3,4,6,7\} \subset [8]$ and its unfolded line, cut between 1 and 8. The maximal arcs of $B_k$ in $[8]$ are $\{1,2,3,4\}$ and $\{6,7\}$.}\label{fig1}
\end{center}
\end{figure}

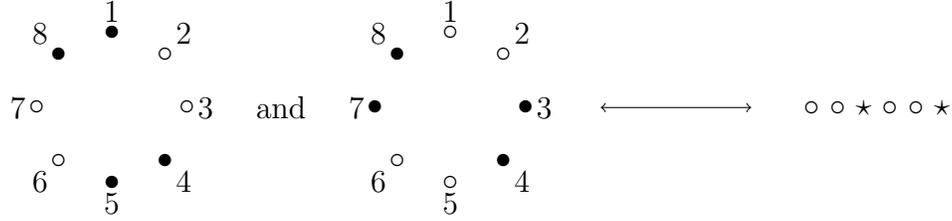
\begin{figure}
\begin{center}
\begin{tikzpicture}[scale = 1]
\node at (5-4.5,0) [rectangle]  {$\circ$};
\node at (5-4.5,0) [right]  {$3$};
\node at (4.707-4.5,0.707) [rectangle]  {$\circ$};  
\node at (4.707-4.5,0.707) [above right]  {$2$};  
\node at (4-4.5,1) [rectangle]  {$\bullet$};
\node at (4-4.5,1) [above]  {$1$};
\node at (4-0.707-4.5,0.707) [rectangle]  {$\bullet$};  
\node at (4-0.707-4.5,0.707) [above left]  {$8$};  
\node at (4-1-4.5,0) [rectangle]  {$\circ$};
\node at (4-1-4.5,0) [left]  {$7$};
\node at (4-0.707-4.5,-0.707) [rectangle]  {$\circ$};  
\node at (4-0.707-4.5,-0.707) [below left]  {$6$};  
\node at (4-4.5,-1) [rectangle]  {$\bullet$}; 
\node at (4-4.5,-1) [below]  {$5$};  
\node at (4.707-4.5,-0.707) [rectangle]  {$\bullet$};  
\node at (4.707-4.5,-0.707) [below right]  {$4$};
----
\node at (2-0.25,0) {and};
----
\node at (1-0.5+4.5,0) [rectangle]  {$\bullet$};
\node at (1-0.5+4.5,0) [right]  {$3$};
\node at (0.707-0.5+4.5,0.707) [rectangle]  {$\circ$};  
\node at (0.707-0.5+4.5,0.707) [above right]  {$2$};  
\node at (0-0.5+4.5,1) [rectangle]  {$\circ$};
\node at (0-0.5+4.5,1) [above]  {$1$};
\node at (-0.707-0.5+4.5,0.707) [rectangle]  {$\bullet$};  
\node at (-0.707-0.5+4.5,0.707) [above left]  {$8$};  
\node at (-1-0.5+4.5,0) [rectangle]  {$\bullet$};
\node at (-1-0.5+4.5,0) [left]  {$7$};
\node at (-0.707-0.5+4.5,-0.707) [rectangle]  {$\circ$};  
\node at (-0.707-0.5+4.5,-0.707) [below left]  {$6$};  
\node at (0-0.5+4.5,-1) [rectangle]  {$\circ$}; 
\node at (0-0.5+4.5,-1) [below]  {$5$};  
\node at (0.707-0.5+4.5,-0.707) [rectangle]  {$\bullet$};  
\node at (0.707-0.5+4.5,-0.707) [below right]  {$4$};
-----------------
\draw[<->] (6,0) -- (8,0) node[right] {$\quad$ $\circ$ $\circ$ $\star$ $\circ$ $\circ$ $\star$}; 
\end{tikzpicture}
\caption{$\{1,4,5,8\}\subset [8]$ and its rotation to the left $\{3,4,7,8\} $}\label{fig2}
\end{center}
\end{figure}

From this counting, \eqref{rec} gives the recursive formulas
\begin{align}
\gamma_{n,k} 
&= \sum_{\substack{B_k \subset [2n] \\ 2 \leq \# B_k \leq 2n-2k+1 \\ \text{$B_k$ satisfies \eqref{item:maximal}}}} \gamma_{n- \frac{1}{2}\# B_k,k-1 }  = \sum_{m=1}^{n-k+1} \delta_{n,m} \gamma_{n-m,k-1} \notag \\
&= \sum_{m=1}^{n-k+1} \binom{2n-m}{m} \frac{2n}{2n-m} \gamma_{n-m,k-1} =  \sum_{\ell=k-1}^{n-1} \binom{n+\ell}{n-\ell} \frac{2n}{n+\ell} \gamma_{\ell,k-1}, \qquad n \geq k \geq 2,   \label{rec2}
\end{align}
with $\gamma_{n,1}=2$ for $n\geq1$. 
Let 
\[
\beta_n := \sum_{k=1}^n \gamma_{n,k}, \qquad n\geq1; \qquad \beta_0:=1.
\] 
Note that $\beta_n$ is the limiting moment \eqref{eq:moment} of order $p=2n$. The recursive formula \eqref{rec2} implies  
\begin{equation}\label{eq:recursion}
\beta_n = \sum_{\ell=0}^{n-1} \binom{n+\ell}{n-\ell} \frac{2n}{n+\ell}  \beta_\ell, \qquad n\geq1, 
\end{equation}
and thus $\beta_n$ satisfies the recurrence in \cite[Theorem 6]{Smy84} and hence coincides with the $2n$-th moment of $\lambda$. Some examples are:  $\{\beta_n\}_{n=1}^7 =\{ 2, 10, 80, 874, 12092, 202384, 3973580\}$.

It then suffices to show the determinacy of the moment problem to conclude the weak convergence. According to Lemma \ref{lem_carleman} below, one easily sees Carleman's condition 
\[
\sum_{n\geq1} \beta_n^{-\frac{1}{2n}} =\infty, 
\]
which shows that the moment problem for \eqref{eq:moment} is determinate. 
\end{proof}

\begin{lemma}\label{lem_carleman} Let $\{\beta_n\}_{n\geq0}$ be the sequence defined by \eqref{eq:recursion} with $\beta_0=1$. There exists $C>0$ such that 
the inequality $\beta_n \leq (C n)^{2n}$ holds for all $n\geq1$. 
\end{lemma}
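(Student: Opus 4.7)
The plan is to proceed by strong induction on $n$ and show $\beta_n \le (Cn)^{2n}$ for a fixed constant $C$ to be chosen large enough (for instance $C=3$). The base case $n=1$ is trivial since $\beta_1 = \binom{1}{1}\cdot\frac{2}{1}\cdot\beta_0 = 2$, which is at most $C^2$. The inductive step uses the crude-but-sharp-enough binomial estimate
\[
\binom{n+\ell}{n-\ell}\,\frac{2n}{n+\ell} \;\le\; \frac{2\,(2n)^{n-\ell}}{(n-\ell)!},
\]
which follows from $\binom{n+\ell}{n-\ell} = \prod_{j=1}^{n-\ell}(2\ell+j)/(n-\ell)! \le (2n)^{n-\ell}/(n-\ell)!$ together with $2n/(n+\ell)\le 2$. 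The factorial in the denominator is what keeps the bound of the right order; the cruder estimate $\binom{n+\ell}{n-\ell}\le 2^{n+\ell}$ would only give $\beta_n\le c^{n^2}$, which is too weak for Carleman.

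Substituting $k=n-\ell$ and applying the inductive hypothesis in the form $\beta_{n-k}\le (Cn)^{2(n-k)}$ (valid uniformly for $0\le k\le n$, using $\beta_0=1$ and $(Cn)^0=1$ at the boundary), the recursion \eqref{eq:recursion} yields
\[
\beta_n \;\le\; \sum_{k=1}^{n}\frac{2\,(2n)^{k}}{k!}\,(Cn)^{2(n-k)}
\;=\; (Cn)^{2n}\sum_{k=1}^{n}\frac{2}{k!}\left(\frac{2}{C^{2}n}\right)^{k}
\;\le\; 2(Cn)^{2n}\bigl(e^{2/(C^{2}n)}-1\bigr).
\]
The worst case occurs at $n=1$, where one needs $2(e^{2/C^{2}}-1)\le 1$; any $C\ge 3$ comfortably satisfies this. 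A direct computation at $n=1$ also confirms the bound, closing the induction.

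From $\beta_n\le (Cn)^{2n}$ one gets $\beta_n^{-1/(2n)}\ge 1/(Cn)$, and $\sum_n 1/(Cn)$ diverges, which is Carleman's condition. The only step that requires any care is producing an inductive bound that is tight enough at the $\log$-scale $\log\beta_n = O(n\log n)$: replacing the factorial bound $\binom{n+\ell}{n-\ell}\le (2n)^{n-\ell}/(n-\ell)!$ by a looser estimate destroys the argument, so this is the main (very mild) obstacle. No other ingredient beyond the recursion \eqref{eq:recursion} and the Taylor expansion of the exponential is used.
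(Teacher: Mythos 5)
Your proposal is correct. Both you and the paper prove the bound by induction on the recursion \eqref{eq:recursion} with the hypothesis $\beta_\ell\le(C\ell)^{2\ell}$, but the way the terms are estimated is genuinely different. The paper bounds $\binom{n+\ell}{n-\ell}\frac{2n}{n+\ell}$ via two-sided Stirling estimates, which forces a case split of the sum into the ranges $1\le\ell\le n-4$ and $n-3\le\ell\le n-1$ (to control the near-diagonal terms where $n-\ell$ is small), and this bookkeeping yields the constant $C=11$. You instead use the elementary inequality $\binom{n+\ell}{n-\ell}\le(2n)^{n-\ell}/(n-\ell)!$, whose factorial denominator is exactly what lets the whole sum collapse, after the substitution $k=n-\ell$ and the uniform hypothesis $\beta_{n-k}\le(Cn)^{2(n-k)}$ (with the $\ell=0$ boundary term handled by $\beta_0=1$), into the truncated exponential series $2(Cn)^{2n}\bigl(e^{2/(C^2n)}-1\bigr)$; this avoids Stirling and the case analysis entirely and gives the better constant $C=3$. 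Since the lemma is only used to verify Carleman's condition, either constant suffices, but your argument is shorter and, as you note, identifies precisely the point where a cruder binomial bound such as $2^{n+\ell}$ would fail to give the needed $\log\beta_n=O(n\log n)$ scale.
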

\begin{remark}
The proof below shows that $C=11$ suffices. Moreover, according to OEIS A048286, a more precise asymptotics $\beta_n \sim c (2/(e \log 2))^n  n^{n + \frac{1}{2} -\frac{\log 2}{4}}$ holds,  where $c = 1.6463...$. 
\end{remark}
\begin{proof}
 We proceed by induction using the recursion formula \eqref{eq:recursion} and Stirling's formula 
 \[
 \sqrt{2\pi}  n^{n+1/2} e^{-n} \leq n! \leq  e  n^{n+1/2} e^{-n},\qquad n\geq1.
 \]
Let $n\geq2$. We adopt the notation $0^0=1$. Assuming the desired inequality holds until $n-1$ for some constant $C>1$, one has 
 \begin{align*}
 \beta_n 
 &= 2+\sum_{\ell=1}^{n-1} \binom{n+\ell}{n-\ell} \frac{2n}{n+\ell} \beta_\ell 
 \leq 2+ 2\sum_{\ell=1}^{n-1} \binom{n+\ell}{n-\ell} (C\ell)^{2\ell} \\
 & \leq 2+ \frac{e}{\pi\sqrt{2}}\sum_{\ell=1}^{n-1}  \frac{(n+\ell)^{n+\ell +1/2}}{\sqrt{\ell} 2^{2\ell} (n-\ell)^{n-\ell +1/2}} C^{2\ell} 
  \leq 2+ \frac{e}{\pi\sqrt{2}}\sum_{\ell=1}^{n-1}  \frac{(n+\ell)^{n+\ell +1/2}}{ 2^{2\ell} (n-\ell)^{n-\ell +1/2}} C^{2\ell}. 
 \end{align*}
 We split the sum into the two parts $1 \leq \ell \leq n-4$ and $n-3 \leq \ell \leq n-1$ (the arguments below are valid even for $2 \leq n \leq 4$ by setting the irrelevant terms to be 0). The first part is estimated as
 \begin{align*}
 \sum_{\ell=1}^{n-4}  \frac{(n+\ell)^{n+\ell +1/2}}{ 2^{2\ell} (n-\ell)^{n-\ell +1/2}} C^{2\ell} 
&\leq \sum_{\ell=1}^{n-4}  \frac{(2n)^{n+ n-4 +1/2}}{ 2^{2\ell} 4^{n-\ell}} C^{2\ell} \leq \frac{1}{n} \sum_{\ell=1}^{n-4} n^{2n} C^{2\ell} \\
&\leq \frac{n-4}{n C^8} n^{2n}C^{2n} \leq \frac{1}{C^2}n^{2n}C^{2n}, 
 \end{align*}
and the second part is estimated as 
 \[
  \sum_{\ell=n-3}^{n-1}  \frac{(n+\ell)^{n+\ell +1/2}}{ 2^{2\ell} (n-\ell)^{n-\ell +1/2}} C^{2\ell}  \leq \frac{3}{C^2} \frac{(2n)^{2n} C^{2n}}{2^{2n-6}} = \frac{3\cdot 2^6}{C^2} n^{2n} C^{2n}. 
 \]
 Obviously, $2\leq \frac{1}{C^2} \frac{e}{\pi\sqrt{2}} n^{2n} C^{2n}$. 
 Putting everything together, we obtain 
 \[
 \beta_n \leq \frac{194e}{C^2\pi\sqrt{2}} (Cn)^{2n}.  
 \]
 By taking $C>1$ such that $C^2 \geq 194 \frac{e}{\pi\sqrt{2}}$ ($C=11$ suffices) we obtain $\beta_n \leq (Cn)^{2n}$. 
 \end{proof}

\bibliographystyle{amsplain}

\begin{thebibliography}{10}

\bibitem{MR2085641}
Luigi Accardi, Anis Ben~Ghorbal, and Nobuaki Obata, \emph{Monotone
  independence, comb graphs and {B}ose-{E}instein condensation}, Infin. Dimens.
  Anal. Quantum Probab. Relat. Top. \textbf{7} (2004), no.~3, 419--435.

\bibitem{MR4260215}
Octavio Arizmendi and Adri\'{a}n Celestino, \emph{Polynomial with cyclic
  monotone elements with applications to random matrices with discrete
  spectrum}, Random Matrices Theory Appl. \textbf{10} (2021), no.~2, Paper No.
  2150020, 19. \MR{4260215}

\bibitem{BozejkoLeinertSpeicher:1996:convolution}
Marek Bo\.{z}ejko, Michael Leinert, and Roland Speicher, \emph{Convolution and
  limit theorems for conditionally free random variables}, Pacific J. Math.
  \textbf{175} (1996), no.~2, 357--388. \MR{1432836}

\bibitem{CebronDahlqvist}
Guillaume C\'ebron, Antoine Dahlqvist, and Franck Gabriel, \emph{Freeness of
  type {B} and conditional freeness for random matrices}, Preprint, 2022.

\bibitem{CHS18}
Beno{\^i}t Collins, Takahiro Hasebe, and Noriyoshi Sakuma, \emph{Free
  probability for purely discrete eigenvalues of random matrices}, J. Math.
  Soc. Japan \textbf{70} (2018), no.~3, 1111--1150.

\bibitem{CLS}
Beno{\^i}t Collins, Felix Leid, and Noriyoshi Sakuma, \emph{Matrix models for
  cyclic monotone and monotone independences}, arXiv:2202.11666.

\bibitem{DAGSV}
Carlos Dias-Aguilera, Tulio Gaxiola, Jorge Santos, and Carlos Vargas,
  \emph{Combinatorics of {NC}-probability spaces with independent constants},
  arXiv:2109.06019.

\bibitem{GM76b}
C.~Godsil and B.~McKay, \emph{Some computational results on the spectra of
  graphs}, Combinatorial mathematics, {IV} ({P}roc. {F}ourth {A}ustralian
  {C}onf., {U}niv. {A}delaide, {A}delaide, 1975), 1976, pp.~73--92. Lecture
  Notes in Math., Vol. 560. \MR{0429651}

\bibitem{GM78}
C.~D. Godsil and B.~D. McKay, \emph{A new graph product and its spectrum},
  Bull. Austral. Math. Soc. \textbf{18} (1978), no.~1, 21--28.

\bibitem{MR2847249}
Takahiro Hasebe, \emph{Conditionally monotone independence {I}: {I}ndependence,
  additive convolutions and related convolutions}, Infin. Dimens. Anal. Quantum
  Probab. Relat. Top. \textbf{14} (2011), no.~3, 465--516. \MR{2847249}

\bibitem{HasebeLehner:cumulants5}
Takahiro Hasebe and Franz Lehner, \emph{Cumulants, spreadability and the
  {C}ampbell-{B}aker-{H}ausdorff series}, arXiv:1711.00219, 2017.

\bibitem{MR2316893}
Akihito Hora and Nobuaki Obata, \emph{Quantum probability and spectral analysis
  of graphs}, Theoretical and Mathematical Physics, Springer, Berlin, 2007,
  With a foreword by Luigi Accardi. \MR{2316893}

\bibitem{Lehner:2004:cumulants1}
Franz Lehner, \emph{Cumulants in noncommutative probability theory. {I}.
  {N}oncommutative exchangeability systems}, Math. Z. \textbf{248} (2004),
  no.~1, 67--100.

\bibitem{Mur00}
Naofumi Muraki, \emph{Monotonic convolution and monotonic
  {L}\'{e}vy-{H}in\v{c}in formula}, preprint, 2000.

\bibitem{MR2062191}
Nobuaki Obata, \emph{Quantum probabilistic approach to spectral analysis of
  star graphs}, The {P}roceedings of the {S}econd {S}endai {W}orkshop on
  {Q}uantum {P}robability and {Q}uantum {I}nformation, vol.~10, 2004,
  pp.~41--52.

\bibitem{MR2421124}
Mihai Popa, \emph{A combinatorial approach to monotonic independence over a
  {$C^*$}-algebra}, Pacific J. Math. \textbf{237} (2008), no.~2, 299--325.
  \MR{2421124}

\bibitem{MR2539549}
\bysame, \emph{A new proof for the multiplicative property of the {B}oolean
  cumulants with applications to the operator-valued case}, Colloq. Math.
  \textbf{117} (2009), no.~1, 81--93. \MR{2539549}

\bibitem{Rickart:1960:general}
Charles~E. Rickart, \emph{General theory of {B}anach algebras}, The University
  Series in Higher Mathematics, D. Van Nostrand Co., Inc., Princeton,
  N.J.-Toronto-London-New York, 1960. \MR{0115101}

\bibitem{Sch74}
Allen~J. Schwenk, \emph{Computing the characteristic polynomial of a graph},
  Bari R.A., Harary F. (eds) Graphs and combinatorics ({P}roc. {C}apital
  {C}onf., {G}eorge {W}ashington {U}niv., {W}ashington, {D}.{C}., 1973),
  Lecture Notes in Math, vol. 406, 1974, pp.~153--172.

\bibitem{MR3798863}
Dimitry Shlyakhtenko, \emph{Free probability of type {B} and asymptotics of
  finite-rank perturbations of random matrices}, Indiana Univ. Math. J.
  \textbf{67} (2018), no.~2, 971--991. \MR{3798863}

\bibitem{Smy80}
C.~J. Smyth, \emph{On the measure of totally real algebraic integers}, J.
  Austral. Math. Soc. Ser. A \textbf{30} (1980/81), no.~2, 137--149.

\bibitem{Smy84}
\bysame, \emph{The mean values of totally real algebraic integers}, Math. Comp.
  \textbf{42} (1984), no.~166, 663--681.

\bibitem{SW97}
Roland Speicher and Reza Woroudi, \emph{Boolean convolution}, Free probability
  theory ({W}aterloo, {ON}, 1995), Fields Inst. Commun., vol.~12, Amer. Math.
  Soc., Providence, RI, 1997, pp.~267--279.

\bibitem{Zhang:2005:schur}
Fuzhen Zhang (ed.), \emph{The {S}chur complement and its applications},
  Numerical Methods and Algorithms, vol.~4, Springer-Verlag, New York, 2005.

\end{thebibliography}
\providecommand{\bysame}{\leavevmode\hbox to3em{\hrulefill}\thinspace}
\providecommand{\MR}{\relax\ifhmode\unskip\space\fi MR }
\providecommand{\MRhref}[2]{\href{http://www.ams.org/mathscinet-getitem?mr=#1}{#2}
}
\providecommand{\href}[2]{#2}

\end{document}